\newtheorem{theorem}{Theorem}[section]
\newtheorem{definition}{Definition}[section]
\newtheorem{proposition}{Proposition}[section]
\newtheorem{corollary}{Corollary}[section]
\newtheorem{lemma}{Lemma}[section]
\newtheorem{fact}{Fact}[section]
\newtheorem{remark}{Remark}[section]
\newtheorem{example}{Example}[section]
\newcommand{\Fix}{\operatorname{Fix}}
\newcommand{\zer}{\operatorname{zer}}
\newcommand{\epi}{\operatorname{epi}}
\newcommand{\inte}{\operatorname{int}}
\newcommand{\rint}{\operatorname{ri}}
\newcommand{\cone}{\operatorname{cone}}
\newcommand{\core}{\operatorname{core}}
\newcommand{\ran}{\operatorname{ran}}
\newcommand{\Hi}{\mathcal{H}}
\newcommand{\R}{\mathbb{R}}
\title{A new projection method for finding the closest point\\ in the intersection of convex sets}
\author{Francisco J. Arag\'on Artacho\thanks{Department of Mathematics,
University of Alicante, \textsc{Spain}. e-mail:~\url{francisco.aragon@ua.es}}
        \and Rub\'en Campoy\thanks{Department of Mathematics,
University of Alicante, \textsc{Spain}. e-mail:~\url{ruben.campoy@ua.es}}
}
\begin{document}
\maketitle

\begin{abstract}
In this paper we present a new iterative projection method for finding the closest point in the intersection of convex sets to any arbitrary point in a Hilbert space. This method, termed AAMR for averaged alternating modified reflections, can be viewed as an adequate modification of the Douglas--Rachford method that yields a solution to the best approximation problem. Under a constraint qualification at the point of interest, we show strong convergence of the method. In fact, the so-called strong CHIP fully characterizes the convergence of the AAMR method for every point in the space. We report some promising numerical experiments where we compare the performance of AAMR against other projection methods for finding the closest point in the intersection of pairs of finite dimensional subspaces.

\paragraph*{Keywords}Best approximation problem, convex set, projection, reflection, nonexpansive mapping, Douglas--Rachford algorithm, feasibility problem

\paragraph*{MSC2010:} 47H09, 47N10, 90C25
\end{abstract}

\section{Introduction}
Given two nonempty closed and convex subsets $A$, $B$ of a Hilbert space $\Hi$ and any point $q\in\Hi$, we are interested in solving the \emph{best approximation problem} of finding the closest point to $q$ in $A\cap B$; i.e.,
\begin{equation}\label{eq:general_case_problem}
\text{Find } p\in A\cap B\text{ such that } \|p-q\|=\inf_{x\in A\cap B} \|x-q\|.
\end{equation}
For any pair of parameters $\alpha,\beta\in\,]0,1[$,  we introduce the \emph{averaged alternating modified reflections operator} (\emph{AAMR operator}), which is the operator $T_{A,B,\alpha,\beta}:\Hi \mapsto \Hi$ given by
\begin{equation}\label{eq:def_GDR_0}
T_{A,B,\alpha,\beta}:=(1-\alpha)I+\alpha(2\beta P_B-I)(2\beta P_A - I),
\end{equation}
where $I$ denotes the identity mapping and $P_A$ and $P_B$ denote the \emph{projectors} (best
approximation operators) onto A and B, respectively.

Given any initial point $x_0\in\Hi$, we define a new projection method termed \emph{averaged alternating modified reflections (AAMR) method}, which is iteratively defined by
\begin{equation}\label{eq:general_case_sequence}
\doublebox{$x_{n+1}:=T_{A-q,B-q,\alpha,\beta}(x_n), \quad n=0,1,2\ldots$}
\end{equation}

If $A\cap B\neq\emptyset$, under the \emph{constraint qualification}
\begin{equation*}
q-P_{A\cap B}(q)\in (N_A+N_B)(P_{A\cap B}(q)),
\end{equation*}
where $N_A$ and $N_B$ denote the \emph{normal cones} to the sets $A$ and $B$, respectively, we shall show (Theorem~\ref{theorem:NPM_convergence}) that
the sequence generated by~\eqref{eq:general_case_sequence} weakly converges to a point $x^\star$ such that
\begin{equation*}
P_A(x^\star+q)=P_{A\cap B}(q),
\end{equation*}
and the \emph{shadow sequence} ${\left(P_A(x_n+q)\right)}_{n=0}^\infty$ is strongly convergent to $P_{A\cap B}(q)$, and thus solves problem~\eqref{eq:general_case_problem}. Even though we show that the so-called \emph{strong conical hull intersection property} (\emph{strong CHIP} in short, see Definition~\ref{def:strong_CHIP}) of $\{A,B\}$ at the point $P_{A\cap B}(q)$ is sufficient but not necessary for the convergence of the AAMR method (see Example~\ref{ex:counterex_strong_CHIP}), the strong CHIP turns out to be the precise condition to be required for the convergence of the AAMR method \emph{for every point} $q\in\Hi$ (see Theorem~\ref{theorem:NPM_convergence} and Proposition~\ref{prop:strong_CHIP}).

The AAMR operator~\eqref{eq:def_GDR_0} can be viewed as a modification of the Douglas--Rachford operator (also known as \emph{averaged alternating reflections operator}), which is the operator $DR_{A,B,\alpha}:\Hi\to\Hi$ given by
\begin{equation}\label{eq:DRM}
DR_{A,B,\alpha}:=(1-\alpha)I+\alpha(2P_B-I)(2P_A-I).
\end{equation}
The iterative method defined by the Douglas--Rachford operator is known to be weakly convergent to a point whose projection onto the set $A$ belongs to $A\cap B$ (see~\cite{LM79}), and the shadow sequence defined by the scheme is also weakly convergent to the projection onto $A$ of that point (see~\cite{Svaiter}). Surprisingly, though, the slight modification $2\beta P_B-I$ and $2\beta P_A-I$ in the \emph{reflector operators} $2P_B-I$ and $2P_A-I$ completely changes the dynamics of the sequence generated by the scheme. It permits to find, not only a point in the intersection of convex sets, but the closest point in the intersection to any arbitrary point in the space and, moreover, it forces the strong convergence of the shadow sequence.

Different projection methods have been proposed in the literature for solving the best approximation problem~\eqref{eq:general_case_problem}. For a very recent bibliography of papers and monographs on projection methods, we recommend~\cite{CC15}. Probably, the most well-known of these schemes is the method of alternating projections (MAP), which was originally introduced by John von Neumann~\cite{VN50} for solving the best approximation problem with two closed linear subspaces. For closed affine subspaces, the sequence generated by MAP is strongly convergent to the solution of~\eqref{eq:general_case_problem}. The method has been widely studied and generalized by many authors; see, e.g.,~\cite{BB93,BLY14,De83,H62,KS04,LM08,LLM09} and the monographs~\cite{BC11,C12,D01,ER11}. Although MAP is also  weakly convergent  for arbitrary convex sets (see~\cite{H04,BMR04,MR03}), it only solves the feasibility problem in this more general setting; that is, it only finds some point in the intersection of the sets, but this point does not need to be the projection onto the intersection of the point of interest. Similarly, the Douglas--Rachford method (DRM) mentioned above can be used to solve the best approximation problem for two closed affine subspaces, but for arbitrary convex sets it only finds a point in the intersection. This scheme was originally introduced in connection
with partial differential equations arising in heat conduction~\cite{DR56}. The DRM has recently gained much popularity, in part thanks to its good behavior in non-convex settings; see, e.g.,~\cite{ABglobal,ABTmatrix,ABTcomb,ABT16,BKroad,BNlocal,benoist,BS11,HLnonconvex,Plinear}. For very recent results on the behavior of the algorithm in the inconsistent case, see~\cite{BM17}.

For the general case of arbitrary convex sets, Dykstra's algorithm arose as a suitable modification of MAP that forces strong convergence to the solution of the best approximation problem, see~\cite{BB94}. It was first proposed by Dykstra in~\cite{D83} for closed and convex cones in finite-dimensional Euclidean spaces, and then extended by Boyle and Dykstra in~\cite{BD86} for closed and convex sets in a Hilbert space. For the case of affine subspaces, Dykstra's algorithm coincides with MAP (see, e.g.,~\cite[pp.~215--216]{D01}).

There are other approaches based on projection algorithms to solve best approximation problems. For instance, the Haugazeau-like algorithms introduce a projector onto the intersection of two halfspaces, which can be explicitly computed, combined in a suitable manner with another projection algorithm. This combination ensures the strong convergence of the algorithm. Haugazeau's algorithm on its basic form was first proposed in~\cite{Hau69}. Thanks to the weak-to-strong convergence principle given in~\cite{BC01}, different modifications of the method have been introduced. Another method is the one proposed by Halpern~\cite{H67}, whose strong convergence to the solution under different conditions for the parameters has been proved by different authors. The main contributions are due to Lions, Wittmann and Bauschke, see~\cite{Ba96} for details. As a result, this algorithm is sometimes called the Halpern--Lions--Wittmann--Bauschke (HLWB) method. It is also worth to mention the work of Combettes~\cite{C09}, where a Douglas--Rachford-like  strongly convergent algorithm is proposed to compute the resolvent of the sum of maximally monotone operators. Particularly, under the same constraint qualification (strong CHIP) that is needed for AAMR to converge, the scheme can be applied for solving best approximation problems. This method is discussed in Section~\ref{sec:several_sets}, where we reveal some similarities and differences with respect to AAMR. A good variety of best approximation methods has been recently collected in~\cite[Section~4.2]{BKroad}, see also~\cite[Chapter~29]{BC11}.

Observe that all the projection methods mentioned above, with the exception of Combettes' algorithm, produce an iterative sequence that converges to a point from which one can obtain the projection of the initial point of the sequence onto the intersection of the sets. Unlike in these schemes, the initial point in the AAMR method can be arbitrarily chosen in the space. Further, it is important to point out that, in general, the set of fixed points of the operator $T_{A,B,\alpha,\beta}$ is not equal to the intersection of the sets of fixed points of the operators $2\beta P_B-I$ and $2\beta P_A-I$. Therefore, the operator $T_{A,B,\alpha,\beta}$ does not belong to the broad family of operators studied in~\cite{RZ15}, see Remark~\ref{rem:Fix_T}(ii) for additional details.

The paper is organized as follows. Some preliminary concepts and auxiliary results are presented in Section~\ref{sec:Preliminaries}. In Section~\ref{sec:operator} we analyze the main properties of the AAMR operators. We introduce the new projection scheme in Section~\ref{sec:method}, where our main convergence results are collected. In Section~\ref{sec:several_sets} we show how finitely many sets can be handled through a standard product space formulation. Various numerical experiments performed on finite-dimensional subspaces are presented in Section~\ref{sec:numerical}. Finally, conclusions and future work are drawn in  Section~\ref{sec:conclusion}.

\section{Preliminaries}\label{sec:Preliminaries}

Throughout this paper our setting is the real Hilbert space $\Hi$  equipped with the inner product~$\langle\cdot,\cdot\rangle$ and the induced norm~$\|\cdot\|$. We abbreviate \emph{norm convergence} of sequences in $\Hi$ with $\to$  and we use $\rightharpoonup$ for \emph{weak convergence}. The \emph{range} of an operator $T$ is denoted by $\ran T:=T(\Hi)$ (with closure $\overline{\ran }\ T$), the set of \emph{fixed points} of $T$ is denoted by $\Fix T:=\{x\in\Hi \,|\, x\in T(x)\}$, and its set of zeros by $\zer T:=\{x\in\Hi \,|\, 0\in T(x)\}$. For a subset $A$ of $\Hi$, we denote by $\inte A$, $\rint A$, $\core A$ and $\cone A$ the \emph{interior} of $A$, the \emph{relative interior} of $A$, the \emph{algebraic interior} of $A$ and the \emph{cone generated} by $A$, respectively; i.e.,
\begin{gather*}
\core A=\left\{a\in A\mid \forall x\in\Hi, \exists\varepsilon>0\text{ such that } a+\lambda x\in A,\, \forall \lambda\in[-\varepsilon,\varepsilon]\right\},\\
\cone A=\left\{\lambda a\mid\lambda\geq 0, a\in A\right\}.
\end{gather*}
We denote by $\overline{A}$ and $A^\bot$ the closure and the orthogonal complement of the set $A$, respectively; i.e.,
$$
A^\bot=\{x\in\Hi \mid \langle a,x\rangle =0, \forall a\in A \}.
$$

Given a nonempty subset $C\subseteq\Hi$ and $x\in \Hi$, a point $p\in C$ is said to be a \emph{best approximation} to $x$ from $C$ if

\begin{equation*}
\|p-x\|=d(x,C):=\inf_{c\in C}\|c-x\|.
\end{equation*}

If a best approximation in $C$ exists for every point in $\Hi$, then $C$ is \emph{proximal}. If every point $x\in\Hi$ has exactly one best approximation $p$, then $C$ is \emph{Chebyshev} and $p$ is called the \emph{projection} of $x$ onto $C$. In this case, the \emph{projector} is the operator $P_C$ that maps every $x\in\Hi$ to its unique projection onto $C$, that is $P_C(x)=p$.

\begin{fact}\label{fact:projection}
	Let $C\subseteq\Hi$ be nonempty, closed and convex. Then the following hold.
	\begin{itemize}
		\item[(i)] $C$ is Chebyshev.
		\item[(ii)] For every $x\in\Hi$,
		$$
		p=P_C(x) \Leftrightarrow p\in C  \text{ and }  \langle c-p,x-p \rangle \leq 0 \text{ for all } c\in C.
		$$
		\item[(iii)] For every $x\in\Hi$ and $\lambda\geq 0$,
		$$
		P_C\left(P_C(x)+\lambda\left(x-P_C(x)\right)\right)=P_C(x).
		$$
		\item[(iv)] For every $y\in\Hi$, $P_{y+C} (x) = y + P_C (x - y)$.
		\item[(v)] For every $\lambda\in\R$, $P_{\lambda C}(\lambda x)=\lambda P_C(x)$.
	\end{itemize}
\end{fact}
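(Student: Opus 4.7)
The plan is to prove the five items in a tightly linked order: (i) first, then derive the variational characterization (ii) from it, and finally deduce (iii)--(v) as quick consequences of (ii). Items (i) and (ii) are the substantive ones; the remaining three are short algebraic verifications.

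For (i), I would take a minimizing sequence $(c_n)\subseteq C$ with $\|c_n-x\|\to d(x,C)$ and apply the parallelogram identity to the vectors $c_n-x$ and $c_m-x$. Using convexity of $C$, the midpoint $(c_n+c_m)/2$ lies in $C$, so its distance to $x$ is bounded below by $d(x,C)$; this forces $\|c_n-c_m\|^2 \to 0$, so $(c_n)$ is Cauchy. Completeness of $\Hi$ gives a limit, and closedness of $C$ puts it in $C$; continuity of the norm makes it a best approximation. Uniqueness follows from the same parallelogram estimate applied to any two minimizers.

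For (ii), the forward implication uses the convex combination $p+t(c-p)\in C$ for $t\in\,]0,1]$: expanding $\|p+t(c-p)-x\|^2\geq\|p-x\|^2$, dividing by $t$, and letting $t\downarrow 0$ yields $\langle c-p,x-p\rangle\leq 0$. The converse is immediate from
\[
\|c-x\|^2=\|c-p\|^2-2\langle c-p,x-p\rangle+\|p-x\|^2\geq \|p-x\|^2,
\]
combined with $p\in C$ and uniqueness from (i). The main (very mild) obstacle is just keeping the signs straight in this expansion.

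Items (iii)--(v) then fall out of (ii) applied with the right substitution. For (iii), with $y:=P_C(x)+\lambda(x-P_C(x))$ and any $c\in C$, one has $\langle c-P_C(x),y-P_C(x)\rangle=\lambda\langle c-P_C(x),x-P_C(x)\rangle\leq 0$ by (ii) since $\lambda\geq 0$, so the characterization in (ii) identifies $P_C(x)$ as $P_C(y)$. For (iv), letting $p:=P_C(x-y)$, every element of $y+C$ has the form $y+c$ with $c\in C$, and $\langle (y+c)-(y+p),\,x-(y+p)\rangle=\langle c-p,(x-y)-p\rangle\leq 0$ by (ii), so $y+p=P_{y+C}(x)$. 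For (v), when $\lambda=0$ both sides are $0$; when $\lambda\neq 0$, any element of $\lambda C$ reads $\lambda c$, and $\langle \lambda c-\lambda P_C(x),\lambda x-\lambda P_C(x)\rangle=\lambda^2\langle c-P_C(x),x-P_C(x)\rangle\leq 0$ again by (ii), giving $\lambda P_C(x)=P_{\lambda C}(\lambda x)$. No genuine difficulty arises beyond bookkeeping once (i) and (ii) are in hand.
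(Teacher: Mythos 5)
Your proposal is correct and complete: the parallelogram-law argument for existence and uniqueness, the variational characterization obtained by perturbing along $p+t(c-p)$, and the derivation of (iii)--(v) from (ii) all check out, including the edge case $\lambda=0$ in (v). The paper itself offers no proof here — it simply cites \cite{BC11} and \cite{De83} — and your argument is precisely the standard one given in those references, so there is nothing to compare beyond noting that you have supplied in full what the paper delegates to the literature.
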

\begin{proof}
	See, e.g., \cite[Theorem~3.14, Proposition~3.17 and Proposition~3.19]{BC11} and \cite[2.7]{De83}.
\end{proof}

\begin{definition}
	Let $D$ be a nonempty subset of $\Hi$ and let $T:D\mapsto\Hi$. The operator $T$ is said to be
	\begin{itemize}
		\item[(i)] \emph{nonexpansive} if
		\begin{equation*}
		\|T(x)-T(y)\|\leq\|x-y\|, \quad \forall x,y\in D;
		\end{equation*}
		\item[(ii)] \emph{firmly nonexpansive} if
		\begin{equation*}
		\|T(x)-T(y)\|^2+\|(I-T)(x)-(I-T)(y)\|^2\leq\|x-y\|^2, \quad \forall x,y\in D,
		\end{equation*}
		or, equivalently,
		\begin{equation*}
		\langle x-y,T(x)-T(y)\rangle\geq \|T(x)-T(y)\|^2, \quad \forall x,y\in D;
		\end{equation*}
		\item[(iii)] $\gamma$-\emph{cocoercive} for $\gamma>0$ if $\gamma T$ is firmly nonexpansive, i.e.,
		\begin{equation*}
		\langle x-y,T(x)-T(y)\rangle\geq \gamma\|T(x)-T(y)\|^2, \quad \forall x,y\in D;
		\end{equation*}
		\item[(iv)] \emph{contractive} if there exists some constant $0\leq \kappa<1$ such that
		\begin{equation*}
		\|T(x)-T(y)\|\leq\kappa\|x-y\|, \quad \forall x,y\in D;
		\end{equation*}
		\item[(v)] \emph{quasi-nonexpansive} if
		\begin{equation*}
		\|T(x)-y\|\leq\|x-y\|, \quad \forall x\in D,\; \forall y\in\Fix T;
		\end{equation*}
		\item[(vi)] \emph{strictly quasi-nonexpansive} if
		\begin{equation*}
		\|T(x)-y\|<\|x-y\|, \quad \forall x\in D \backslash\Fix T,\; \forall y\in\Fix T;
		\end{equation*}
		\item[(vi)] $\alpha$\emph{-averaged} for  $\alpha\in\,]0,1[$, if there exists a nonexpansive operator $R:D\mapsto\Hi$ such that
		\begin{equation*}
		T=(1-\alpha)I+\alpha R.
		\end{equation*}
	\end{itemize}
\end{definition}

\begin{remark}
	Firm nonexpansiveness implies nonexpansiveness, which itself implies quasi-nonexpan\-siveness. The converse implications are not true. For more, see~\cite[Chapter~4]{BC11}.
\end{remark}

\begin{fact}\label{fact:firmly_nonexp}
	Let $D$ be a nonempty subset of $\Hi$ and let $T:D\mapsto\Hi$. The following hold:
	\begin{itemize}
		\item[(i)] $T$ is firmly nonexpansive $\Leftrightarrow$ $2T-I$ is nonexpansive.
		\item[(ii)] If $T$ is $\alpha$-averaged, then $T$ is nonexpansive and strictly quasi-nonexpansive. Moreover, if $\alpha\in\left]0,\frac{1}{2}\right]$ then $T$ is firmly nonexpansive.
	\end{itemize}
\end{fact}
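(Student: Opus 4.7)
The plan is to unpack each definition into its inner-product form and chain the implications; nothing deep is needed here.

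For part (i), I would expand the squared norm
\[
\|(2T-I)(x)-(2T-I)(y)\|^2 = 4\|T(x)-T(y)\|^2 - 4\langle x-y,T(x)-T(y)\rangle + \|x-y\|^2,
\]
using bilinearity of the inner product. Subtracting $\|x-y\|^2$ from both sides shows that the nonexpansiveness inequality for $2T-I$ is equivalent, after dividing by $4$, to
$\langle x-y,T(x)-T(y)\rangle\geq \|T(x)-T(y)\|^2$, which is the inner-product form of firm nonexpansiveness given in Definition~2.2(ii). Both directions follow from this single identity.

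For part (ii), write $T=(1-\alpha)I+\alpha R$ with $R$ nonexpansive. Nonexpansiveness of $T$ follows from the triangle inequality applied to $T(x)-T(y)=(1-\alpha)(x-y)+\alpha(R(x)-R(y))$ together with $\|R(x)-R(y)\|\le\|x-y\|$. For strict quasi-nonexpansiveness I would first observe that $y\in\Fix T$ forces $R(y)=y$, and then apply the parallelogram-type identity
\[
\|(1-\alpha)u+\alpha v\|^2 = (1-\alpha)\|u\|^2+\alpha\|v\|^2-\alpha(1-\alpha)\|u-v\|^2
\]
to $u=x-y$ and $v=R(x)-y$, obtaining
\[
\|T(x)-y\|^2 \le \|x-y\|^2 - \alpha(1-\alpha)\|x-R(x)\|^2.
\]
Since $x\notin\Fix T$ and $x-T(x)=\alpha(x-R(x))$, we have $x\neq R(x)$, and because $\alpha(1-\alpha)>0$ the inequality is strict.

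Finally, for $\alpha\in\,]0,\tfrac12]$, a direct computation gives $2T-I=(1-2\alpha)I+2\alpha R$, which is a convex combination of the nonexpansive operators $I$ and $R$ (here the hypothesis $\alpha\le\tfrac12$ is precisely what makes the coefficient $1-2\alpha$ nonnegative), hence itself nonexpansive; then part (i) yields firm nonexpansiveness of $T$. I expect no real obstacle; the only point requiring mild care is extracting the \emph{strict} inequality in the quasi-nonexpansive estimate, which I handle by translating $x\notin\Fix T$ into $x\neq R(x)$ via the averaging identity.
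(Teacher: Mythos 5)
Your proof is correct; the paper itself does not argue this fact but simply cites \cite[Proposition~4.2, Remarks~4.24, 4.26 and 4.27]{BC11}, and your inner-product expansion for (i), the convexity identity $\|(1-\alpha)u+\alpha v\|^2=(1-\alpha)\|u\|^2+\alpha\|v\|^2-\alpha(1-\alpha)\|u-v\|^2$ for strict quasi-nonexpansiveness, and the rewriting $2T-I=(1-2\alpha)I+2\alpha R$ are exactly the standard computations underlying those citations. In particular, you correctly handle the one delicate point, deducing $x\neq R(x)$ from $x\notin\Fix T$ via $x-T(x)=\alpha(x-R(x))$ so that the quasi-nonexpansiveness inequality becomes strict.
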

\begin{proof}
	See, e.g., \cite[Proposition~4.2, Remark~4.24, Remark~4.26 and Remark~4.27]{BC11}.
\end{proof}

\begin{fact}\label{fact:projection_firmly_nonexapnsive}
	Let $C\subseteq\Hi$ be nonempty, closed and convex. Then the projector operator $P_C$ is firmly nonexpansive. Moreover, if $C$ is a closed subspace, then $P_C$ is a linear mapping.
\end{fact}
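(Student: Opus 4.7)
The plan is to derive both claims from the variational characterization of the projection given in Fact~\ref{fact:projection}(ii). For firm nonexpansiveness, I would pick two arbitrary points $x,y\in\Hi$ and apply the characterization twice. Using $c:=P_C(y)\in C$ in the characterization of $P_C(x)$ yields $\langle P_C(y)-P_C(x),\,x-P_C(x)\rangle\leq 0$, and symmetrically, using $c:=P_C(x)\in C$ in the characterization of $P_C(y)$ gives $\langle P_C(x)-P_C(y),\,y-P_C(y)\rangle\leq 0$. Adding these two inequalities and rearranging produces exactly $\langle x-y,\,P_C(x)-P_C(y)\rangle\geq \|P_C(x)-P_C(y)\|^2$, which is the firm nonexpansiveness inequality in Definition~2.1(ii).

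To handle the second claim, my first step would be to upgrade the variational inequality to an orthogonality condition when $C$ is a subspace. For every $c\in C$ and every $\lambda\in\R$, the vector $P_C(x)+\lambda c$ again lies in $C$, so plugging it into Fact~\ref{fact:projection}(ii) yields $\lambda\langle c,\,x-P_C(x)\rangle\leq 0$ for all $\lambda\in\R$. Allowing $\lambda$ to take both signs forces $\langle c,\,x-P_C(x)\rangle=0$ for every $c\in C$, so $P_C(x)$ is now characterized by $P_C(x)\in C$ and $x-P_C(x)\in C^\bot$.

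Given this refined characterization, linearity is immediate: for any $x,y\in\Hi$ and $\alpha,\beta\in\R$, the candidate $\alpha P_C(x)+\beta P_C(y)$ lies in $C$ since $C$ is a subspace, and the residual $(\alpha x+\beta y)-(\alpha P_C(x)+\beta P_C(y))=\alpha(x-P_C(x))+\beta(y-P_C(y))$ lies in $C^\bot$ by linearity. The uniqueness of the best approximation guaranteed by Fact~\ref{fact:projection}(i) then forces $P_C(\alpha x+\beta y)=\alpha P_C(x)+\beta P_C(y)$. No step here is genuinely difficult; the only subtle point is the two-sided scaling $\pm\lambda c$ needed to promote the inequality to an equality in the subspace case, and everything else reduces to direct algebraic manipulation.
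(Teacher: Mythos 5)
Your proof is correct: the two applications of Fact~\ref{fact:projection}(ii) added together give exactly the firm nonexpansiveness inequality, and the upgrade to the orthogonality characterization $x-P_C(x)\in C^\bot$ via the two-sided scaling $\pm\lambda c$ cleanly yields linearity. The paper itself offers no argument here, only citations to \cite[Proposition~4.8]{BC11} and \cite[Theorem~5.13]{D01}, and what you have written is precisely the standard proof found in those references, so there is nothing substantive to compare.
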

\begin{proof}
	See, e.g., \cite[Proposition~4.8]{BC11} and~\cite[Theorem~5.13]{D01}.
\end{proof}

\begin{fact}\label{fact:convex_closed_FixT}
	Let $D\subseteq\Hi$ be nonempty, closed and convex, and let $T:D\mapsto\Hi$ be nonexpansive. Then, $\Fix T$ is a closed and convex set.
\end{fact}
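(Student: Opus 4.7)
My plan is to prove the two properties separately, each via a short standard argument.

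For closedness, I would take any sequence $(x_n)$ in $\Fix T$ with $x_n\to x$. Since $D$ is closed we have $x\in D$, and since $T$ is nonexpansive it is continuous, so $T(x_n)\to T(x)$. But $T(x_n)=x_n\to x$, so $T(x)=x$ by uniqueness of limits, giving $x\in\Fix T$.

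For convexity, I would fix $x,y\in\Fix T$ and $\lambda\in[0,1]$, and set $z:=\lambda x+(1-\lambda)y$. Convexity of $D$ gives $z\in D$. Applying nonexpansiveness twice yields
\begin{equation*}
\|T(z)-x\|=\|T(z)-T(x)\|\leq\|z-x\|=(1-\lambda)\|y-x\|,
\end{equation*}
\begin{equation*}
\|T(z)-y\|=\|T(z)-T(y)\|\leq\|z-y\|=\lambda\|y-x\|.
\end{equation*}
Adding these and using the triangle inequality $\|y-x\|\leq\|T(z)-x\|+\|T(z)-y\|$ sandwiches everything, forcing both displayed inequalities to be equalities. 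The key step is then to invoke the strict convexity of the Hilbert norm: equality in the triangle inequality implies that $T(z)-x$ and $y-T(z)$ are nonnegative multiples of a common vector, so $T(z)$ lies on the segment $[x,y]$, and the equal norm bounds pin it down to $T(z)=\lambda x+(1-\lambda)y=z$.

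The only potentially delicate point is the last deduction; I would justify it by a direct parallelogram-law computation rather than quoting strict convexity abstractly. Concretely, from the two equalities one obtains $\|(y-T(z))-(T(z)-x)\|^2=0$ after expanding via the inner product, so $y-T(z)=T(z)-x$, i.e.\ $T(z)=\tfrac{1}{2}(x+y)$ in the midpoint case; the general $\lambda$ case follows by an analogous identity (or, equivalently, by noting that midpoint-convexity plus closedness of $\Fix T$ suffices since we have already shown $\Fix T$ is closed). This yields $z\in\Fix T$ and completes the proof.
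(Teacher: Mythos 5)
Your proof is correct. The paper itself offers no argument here---it simply cites \cite[Corollary~4.15]{BC11}---so your self-contained derivation is necessarily a different route. Your closedness argument (continuity of $T$ from nonexpansiveness, plus closedness of $D$) is the standard one. For convexity you use the classical ``sandwich'' argument: the two nonexpansiveness estimates sum to $\|y-x\|$, the triangle inequality forces equality throughout, and strict convexity of the Hilbert norm (or, as you rightly prefer, an explicit parallelogram-law computation showing $\|(y-T(z))-(T(z)-x)\|^2\leq 0$ in the midpoint case, followed by midpoint convexity plus the already-established closedness) pins $T(z)$ to $z$. This is airtight; the only hypotheses you use are exactly those stated. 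For comparison, the cited result in~\cite{BC11} obtains convexity by a different mechanism that works for the larger class of quasi-nonexpansive maps: one writes $\Fix T=\bigcap_{x\in D}\{y\in D: \|T(x)-y\|\leq\|x-y\|\}$, and each set in the intersection is the trace on $D$ of a closed halfspace, so $\Fix T$ is closed and convex as an intersection of closed convex sets. That argument is slicker and more general; yours is more elementary and geometrically transparent, and it makes visible where the Hilbert-space structure (strict convexity via the parallelogram law) actually enters. Either is a perfectly acceptable substitute for the citation.
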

\begin{proof}
	See, e.g., \cite[Corollary~4.15]{BC11}.
\end{proof}

In the next result we consider a Krasnosel'ski\u{\i}--Mann iteration. The second part is a straightforward consequence of~\cite[Theorem~2.1]{C09}, which is a refinement of the algorithm proposed by Lions and Mercier in~\cite{LM79}.

\begin{fact}\label{fact:convergence}
	Let $T_1,T_2:\Hi\mapsto \Hi$ be firmly nonexpansive operators, let ${(\lambda_n)}_{n=0}^\infty$ be a sequence in $[0,1]$, and let $x_0\in\Hi$. Consider   $T:=(2T_2-I)(2T_1-I)$ and suppose $\Fix T\neq\emptyset$. Set
	$$
	x_{n+1}=(1-\lambda_n)x_n+\lambda_nT(x_n)\quad\text{for }n=0,1,2\ldots.
	$$
	Then the following hold:
	\begin{enumerate}
		\item[(a)] If $\sum_{n\geq 0} \lambda_n(1-\lambda_n)=+\infty$ for all $n\geq 0$, then
		\begin{enumerate}
			\item[(i)] $({x_{n+1}-x_n)}_{n=0}^\infty$ converges strongly to 0.
			\item[(ii)] ${(x_n)}_{n=0}^\infty$ converges weakly to a point in $\Fix T$.
		\end{enumerate}
		\item[(b)] Suppose that $T_1$ is $\gamma$-cocoercive for some $\gamma>1$ and $\inf_{n\geq 0}{\lambda_n}>0$. Then $\left({T_1(x_n)}\right)_{n=0}^\infty$ converges strongly to the unique point in $T_1(\Fix T)$.
	\end{enumerate}
\end{fact}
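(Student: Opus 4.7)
My plan is to view the recursion as a Krasnosel'ski\u{\i}--Mann (KM) iteration of a nonexpansive operator and invoke classical convergence results. By Fact~\ref{fact:firmly_nonexp}(i), the firm nonexpansiveness of $T_1,T_2$ makes $2T_1-I$ and $2T_2-I$ nonexpansive, so their composition $T$ is nonexpansive as well. Since $\Fix T\neq\emptyset$, part~(a) is the classical KM theorem of Lions--Mercier~\cite{LM79}: expanding
\begin{equation*}
\|x_{n+1}-y\|^2=(1-\lambda_n)\|x_n-y\|^2+\lambda_n\|T(x_n)-y\|^2-\lambda_n(1-\lambda_n)\|T(x_n)-x_n\|^2
\end{equation*}
and using $\|T(x_n)-y\|\leq\|x_n-y\|$ for every $y\in\Fix T$ yields Fejér monotonicity with a summable defect; the condition $\sum\lambda_n(1-\lambda_n)=+\infty$ then forces $\|T(x_n)-x_n\|\to 0$, and since $x_{n+1}-x_n=\lambda_n(T(x_n)-x_n)$ this gives~(i). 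For~(ii), demiclosedness of $I-T$ at $0$ (standard for nonexpansive maps) places every weak cluster point in $\Fix T$, and Opial's lemma combined with Fejér monotonicity identifies a unique weak limit.

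For part~(b) I would first verify that $T_1(\Fix T)$ is a singleton. Given $x,y\in\Fix T$, the fixed-point equation rewrites as $T_1(x)=T_2((2T_1-I)(x))$ and similarly for $y$. The identity $T(x)-T(y)=x-y$ together with nonexpansiveness of $2T_2-I$ gives $\|x-y\|\leq\|(2T_1-I)(x)-(2T_1-I)(y)\|$; expanding the square simplifies to $\langle T_1(x)-T_1(y),x-y\rangle\leq\|T_1(x)-T_1(y)\|^2$. Combined with the cocoercivity bound $\langle T_1(x)-T_1(y),x-y\rangle\geq\gamma\|T_1(x)-T_1(y)\|^2$ and $\gamma>1$, this forces $T_1(x)=T_1(y)$. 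Denote the common value by $u^\star$.

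For the strong convergence $T_1(x_n)\to u^\star$, the key is a refined Fejér inequality. Expanding $\|T(x_n)-y\|^2$ for $y\in\Fix T$ using the firm nonexpansiveness of $T_2$ evaluated at $(2T_1-I)(x_n)$ versus $(2T_1-I)(y)$, and then invoking the cocoercivity of $T_1$, should yield
\begin{equation*}
\|T(x_n)-y\|^2 \leq \|x_n-y\|^2 - 4(\gamma-1)\|T_1(x_n)-u^\star\|^2.
\end{equation*}
Substituting this bound into the averaging identity of the first paragraph and using $\inf_n\lambda_n>0$ produces $\sum_n\|T_1(x_n)-u^\star\|^2<+\infty$ after telescoping, hence the strong convergence. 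This is precisely the content of~\cite[Theorem~2.1]{C09} specialised to our composition. The main obstacle is the derivation of the refined inequality above: weak convergence from~(a) cannot be upgraded to strong convergence of the shadow sequence without the strict cocoercivity margin $\gamma-1>0$, and tracking where exactly this margin enters the expansion is the delicate point that Combettes' refinement exploits.
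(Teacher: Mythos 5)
Your proposal is correct, and for part (b) it takes a genuinely different route from the paper. For part (a) both arguments are the classical Krasnosel'ski\u{\i}--Mann theorem: the paper simply cites \cite[Theorem~5.14]{BC11} after noting that $T$ is nonexpansive, while you reprove it via the standard averaging identity, Fej\'er monotonicity, demiclosedness and Opial's lemma. For part (b), the paper translates the problem into monotone operator language: it writes $T_i=J_{A_i}$ for the maximally monotone operators $A_i=T_i^{-1}-I$, identifies $T_1(\Fix T)$ with $\zer(A_1+A_2)$ via \cite[Proposition~25.1]{BC11}, notes that $\gamma$-cocoercivity of $T_1$ with $\gamma>1$ makes $A_1$ $(\gamma-1)$-strongly monotone, and then invokes \cite[Theorem~2.1(ii)(b)]{C09} as a black box. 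You instead give a direct, self-contained Hilbert-space computation: your singleton argument for $T_1(\Fix T)$ is valid, and the refined Fej\'er inequality you flag as the delicate point does hold exactly with the constant you guessed, since
\begin{align*}
\|T(x)-T(y)\|^2&\leq\|(2T_1-I)(x)-(2T_1-I)(y)\|^2\\
&=\|x-y\|^2+4\|T_1(x)-T_1(y)\|^2-4\langle T_1(x)-T_1(y),x-y\rangle\\
&\leq\|x-y\|^2-4(\gamma-1)\|T_1(x)-T_1(y)\|^2,
\end{align*}
using only the nonexpansiveness of $2T_2-I$ and the cocoercivity of $T_1$; substituting this into the averaging identity and telescoping with $\inf_{n\geq0}\lambda_n>0$ indeed yields $\sum_n\|T_1(x_n)-u^\star\|^2<+\infty$ and hence the strong convergence. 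Your route is elementary and makes the role of the margin $\gamma-1>0$ completely explicit, at the cost of redoing work; the paper's route is shorter and places the statement inside the Douglas--Rachford splitting framework, which also explains conceptually why $T_1(\Fix T)$ is a singleton (it is the zero set of a sum involving a strongly monotone operator).
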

\begin{proof}
(a) This is a Krasnosel'ski\u{\i}--Mann algorithm (see, e.g.,~\cite[Theorem~5.14]{BC11}), and $T$ is nonexpansive by Fact~\ref{fact:firmly_nonexp}.\\
(b)	Since $T_1$ and $T_2$ are firmly nonexpansive, by~\cite[Corollary~23.8]{BC11}, the operators $A_i:=T_i^{-1}-I$ are maximally monotone (see e.g.~\cite[Definition~20.20]{BC11}) and satisfy $T_i=J_{A_i}$, for $i=1,2$, where $J_{A_i}=(I+A_i)^{-1}$ is the \emph{resolvent} of $A_i$. By~\cite[Proposition~25.1]{BC11}, we have $\zer(A_1+A_2)=J_{A_1}(\Fix((2J_{A_2}-I)(2J_{A_1}-I)))=T_1(\Fix T)\neq \emptyset$. By assumption, $T_1$ is $\gamma$-cocoercive for some $\gamma>1$. Then, by~\cite[Proposition~23.11]{BC11}, we know that $A_1$ is $(\gamma-1)$-\emph{strongly monotone} (i.e., $A_1-(\gamma-1)I$ is monotone). Hence, (b) is a direct consequence of~\cite[Theorem~2.1(ii)(b)]{C09}.
\end{proof}

\begin{fact}\label{fact:linearmap_strong_convergence}
	Let $T:\Hi\to\Hi$ be a nonexpansive linear operator and let $x_0\in\Hi$. Set $x_{n+1}=T(x_n)$, $n=0,1,2\dots$ Then
	$$
	x_n\rightarrow P_{\Fix T}(x_0) \Leftrightarrow x_n-x_{n+1}\rightarrow0.
	$$
\end{fact}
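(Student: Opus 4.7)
The forward direction is immediate: since $T$ is continuous, $x_n \to p := P_{\Fix T}(x_0)$ implies $x_{n+1} = T(x_n) \to T(p) = p$, and hence $x_n - x_{n+1} \to 0$. For the converse, I assume $x_n - x_{n+1} \to 0$. My plan is to exploit the orthogonal decomposition induced by the closed subspace $\Fix T$ (which is closed and linear because $T$ is linear and nonexpansive, with $0 \in \Fix T$) in order to reduce everything to showing $z_n := T^n y_0 \to 0$ strongly, where $y_0 := x_0 - p \in (\Fix T)^\perp$. Linearity together with $T^n p = p$ yields $x_n = p + z_n$, so $x_n \to p$ is equivalent to $z_n \to 0$.

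The hypothesis rewrites as $(I-T) z_n \to 0$. From the telescoping identity $I - T^k = (I + T + \cdots + T^{k-1})(I - T)$ and $\|T\| \leq 1$, I obtain, for each fixed $k \geq 0$,
\begin{equation*}
\|z_n - z_{n+k}\| = \|(I - T^k) z_n\| \leq k\,\|(I - T) z_n\| \to 0 \quad \text{as } n \to \infty.
\end{equation*}
I will then invoke the von Neumann mean ergodic theorem for the linear contraction $T$ on $\Hi$: the Ces\`aro averages $\bar y_N := \frac{1}{N} \sum_{k=0}^{N-1} T^k y_0$ converge strongly to $P_{\Fix T}(y_0) = 0$ as $N \to \infty$. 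For every fixed $N$,
\begin{equation*}
T^n \bar y_N = \frac{1}{N} \sum_{k=0}^{N-1} z_{n+k}, \qquad \|T^n \bar y_N - z_n\| \leq \frac{1}{N} \sum_{k=0}^{N-1} \|z_{n+k} - z_n\| \to 0 \quad \text{as } n \to \infty,
\end{equation*}
while nonexpansiveness of $T^n$ yields $\|T^n \bar y_N\| \leq \|\bar y_N\|$. Combining these two facts gives $\limsup_{n} \|z_n\| \leq \|\bar y_N\|$, and letting $N \to \infty$ forces $\|z_n\| \to 0$, as required.

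The main obstacle is upgrading from the qualitative asymptotic-regularity information $(I - T) z_n \to 0$, which through a standard demiclosedness argument (combined with $z_n \in (\Fix T)^\perp$) only yields the weak convergence $z_n \rightharpoonup 0$, to the desired strong convergence. The key device is the two-parameter limit above: for each fixed $N$ the iterate $z_n$ is eventually arbitrarily close in norm to the shifted ergodic average $T^n \bar y_N$, whose norm is controlled uniformly in $n$ by $\|\bar y_N\|$; the mean ergodic theorem then makes the latter arbitrarily small as $N \to \infty$, trapping $\|z_n\|$ between $0$ and quantities that tend to $0$.
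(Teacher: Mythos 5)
Your proof is correct. Note that the paper does not actually prove this statement: it is recorded as a Fact with a one-line citation to \cite[Proposition~5.27]{BC11}, so any self-contained argument is necessarily "different from the paper." Your route is a clean one: after the orthogonal reduction $x_n=p+z_n$ with $z_n=T^ny_0$ and $y_0\in(\Fix T)^\perp$, you upgrade the asymptotic regularity $(I-T)z_n\to 0$ to strong convergence by comparing $z_n$ with the shifted Ces\`aro averages $T^n\bar y_N$ and invoking the von Neumann--Lorch mean ergodic theorem for Hilbert-space contractions; the estimate $\|z_n-z_{n+k}\|\le k\|(I-T)z_n\|$ is exactly what makes the two-parameter limit work, and the $\limsup$ sandwich is airtight. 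The standard textbook proof instead works with the orthogonal decomposition $\Hi=\Fix T\oplus\overline{\ran}(I-T)$ directly and argues on the dense subspace $\ran(I-T)$, so it avoids the ergodic theorem at the cost of a density/uniform-boundedness step; your version trades that for the ergodic theorem, which is arguably heavier machinery but makes the "trapping" mechanism very transparent. One small point you should make explicit: the claim $P_{\Fix T}(y_0)=0$ in the mean ergodic theorem uses that the ergodic limit is the \emph{orthogonal} projection onto $\Fix T$, equivalently that $\overline{\ran}(I-T)=(\Fix T)^\perp$; this rests on the identity $\Fix T=\Fix T^*$ for linear contractions on a Hilbert space (if $Tx=x$ then $\|T^*x-x\|^2=\|T^*x\|^2-\|x\|^2\le 0$), which is standard but worth a sentence since your whole reduction hinges on it.
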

\begin{proof}
	See, e.g., \cite[Proposition~5.27]{BC11}.
\end{proof}

\begin{fact}\label{fact:pazy}
	Given $\alpha\in\,]0,1[$, let $T:\Hi\mapsto\Hi$ be an $\alpha$-averaged operator. For any $x\in\Hi$, the following hold:
	\begin{itemize}
		\item[(i)] $(T^n(x)-T^{n+1}(x))_{n=0}^\infty$ converges in norm to the unique element of minimum norm in $\overline{\ran}(I-T)$;
		\item[(ii)] $\Fix T=\emptyset \Leftrightarrow \|T^n(x)\|\rightarrow +\infty$.
	\end{itemize}
\end{fact}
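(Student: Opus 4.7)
This is the classical asymptotic theorem of Pazy (refined by Baillon--Bruck--Reich) for averaged nonexpansive operators on a Hilbert space. The plan is to write $T=(1-\alpha)I+\alpha R$ with $R$ nonexpansive, so that $I-T=\alpha(I-R)$, denote by $v$ the unique minimum-norm element of the nonempty closed convex set $\overline{\ran}(I-T)$, and reduce both parts to the behaviour of the successive differences $y_n:=T^nx-T^{n+1}x$.

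For part~(i), I would first establish that $\|y_n\|$ is nonincreasing via $\|y_{n+1}\|=\|T(T^nx)-T(T^{n+1}x)\|\le\|y_n\|$, so its limit $L$ exists and satisfies $L\ge\|v\|$ as each $y_n$ lies in $\ran(I-T)\subseteq\overline{\ran}(I-T)$. For the reverse bound, I would show, for any $w=z-Tz\in\ran(I-T)$, that $\limsup_n\|y_n\|\le\|w\|$ by comparing the orbit of $x$ with the orbit of $z$ through the nonexpansiveness of $T$, and then pass to the infimum over $\ran(I-T)$ and its closure. Strong convergence $y_n\to v$ (not merely $\|y_n\|\to\|v\|$) then follows from the Hilbert-space geometry: the bounded sequence $(y_n)$ has every weak cluster point in the closed convex set $\overline{\ran}(I-T)$ with norm at most $L=\|v\|$, so each such cluster point must equal the unique minimum-norm element $v$; weak convergence together with $\|y_n\|\to\|v\|$ then upgrades to norm convergence.

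For part~(ii), the implication $\|T^nx\|\to+\infty\Rightarrow\Fix T=\emptyset$ is immediate from Fact~\ref{fact:firmly_nonexp}(ii), since any fixed point $p$ would yield $\|T^nx\|\le\|p\|+\|x-p\|$, contradicting unboundedness. For the converse I would factor $\Fix T=\emptyset\Rightarrow v\neq 0\Rightarrow\|T^nx\|\to+\infty$. The second step is a Ces\`aro argument applied to the telescope $x-T^nx=\sum_{k=0}^{n-1}y_k$: since $y_k\to v$ in norm, $(x-T^nx)/n\to v$ and hence $\|T^nx\|/n\to\|v\|$, so $\|T^nx\|\to+\infty$ whenever $v\neq 0$.

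The main obstacle is the first implication in the factored converse, namely $\Fix T=\emptyset\Rightarrow v\neq 0$, i.e., every averaged operator with $0\in\overline{\ran}(I-T)$ admits a fixed point. This is a genuinely Hilbert-space statement: one exploits the averaged structure to exhibit a bounded approximate-fixed-point sequence and then invokes demiclosedness of $I-T$ at $0$ to obtain a fixed point as a weak cluster point. In the write-up I would either isolate this as a separate lemma (via the maximal-monotone-operator correspondence for firmly nonexpansive operators, writing the relevant resolvent as $J_A$ with $A$ maximally monotone and transferring $0\in\overline{\ran}(I-T)$ to $0\in\overline{\ran}\,A$) or simply cite the classical result from a standard reference such as \cite{BC11}.
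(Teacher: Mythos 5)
The paper does not actually prove this Fact---it is established purely by citation to Pazy and to Baillon--Bruck--Reich---so any self-contained argument is by default a different route. Your skeleton for part~(i) (monotonicity of $\|y_n\|$, lower bound $L\ge\|v\|$, upper bound via orbit comparison, then weak-cluster-point plus norm convergence) is the right one, but it has a gap exactly where the theorem lives: the inequality $\limsup_n\|y_n\|\le\|z-Tz\|$ cannot be obtained ``through the nonexpansiveness of $T$''. It is false for general nonexpansive maps: a rotation of $\R^2$ by $\pi/2$ has $\overline{\ran}(I-T)=\R^2$, hence $v=0$, while $\|y_n\|=\|x-Tx\|$ is a nonzero constant. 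The averaged structure must enter here, e.g.\ via the identity $\|Tu-Tw\|^2+\tfrac{1-\alpha}{\alpha}\|(I-T)u-(I-T)w\|^2\le\|u-w\|^2$ applied along the two orbits and summed, which gives $(I-T)(T^nx)-(I-T)(T^nz)\to0$ and hence $\limsup_n\|y_n\|\le\|z-Tz\|$. With that insertion, part~(i) goes through (you would also need to justify that $\overline{\ran}(I-T)$ is convex, e.g.\ because $I-T$ is maximal monotone).

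Part~(ii) has a more serious defect: the intermediate claim you identify as ``the main obstacle'', namely $\Fix T=\emptyset\Rightarrow v\neq0$ (equivalently, that every averaged operator with $0\in\overline{\ran}(I-T)$ has a fixed point), is simply false, so the lemma you propose to isolate cannot be proved. Take $\Hi=\R$ and $T=\operatorname{prox}_f$ with $f(x)=e^x$: this $T$ is firmly nonexpansive, hence $\tfrac12$-averaged, it has no fixed point ($\zer\partial f=\emptyset$), yet $\ran(I-T)=\ran\partial f=\,]0,+\infty[$, so $v=0$. Here $T^nx\to-\infty$ sublinearly (roughly like $-\log n$), so the Fact itself holds, but your Ces\`aro argument only yields $\|T^nx\|/n\to\|v\|=0$ and detects nothing. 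The correct proof of the forward implication in (ii) does not factor through $v\neq0$; one shows instead that if $(\|T^nx\|)$ does not tend to $+\infty$ then some subsequence of the orbit is bounded, and for an averaged (or even merely nonexpansive) self-map of $\Hi$ a bounded orbit forces a fixed point via the asymptotic-center argument (or, as the paper does, one simply cites \cite{BBR78}). Your reverse implication in (ii) and the Ces\`aro telescoping identity are fine as stated.
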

\begin{proof}
	(i) See \cite[Corollary~2.3]{BBR78}, \cite[Corollary~2]{P71}. (ii) See \cite[Corollary~2.2]{BBR78}.
\end{proof}

Let  $C\subset\Hi$  be a nonempty convex set and let $x\in\Hi$. The \emph{normal cone} mapping to $C$ is given by
$$
N_C(x):x\mapsto\left\{\begin{array}{ll}\{u\in\Hi \mid \langle u, c-x \rangle\leq 0, \quad \forall c\in C \} &\text{if }x\in C,\\
\emptyset & \text{otherwise.}\end{array}\right.
$$
The nearest point projection can be characterized by the normal cone.

\begin{fact}\label{fact:projection_normalcone}
	Let $C\subseteq\Hi$ be a nonempty closed and convex set, and let $x$ and $p$ be points in~$\Hi$. Then,
	$$
	p=P_C(x) \Leftrightarrow x-p\in N_C(p).
	$$
\end{fact}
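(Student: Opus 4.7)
The plan is to derive this characterization as an essentially immediate translation of the variational inequality characterization of the projector given in Fact~\ref{fact:projection}(ii), via the definition of the normal cone. No deeper machinery is needed.

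First I would unpack the right-hand side $x-p\in N_C(p)$ using the definition of $N_C$. By definition, $N_C(p)$ is empty unless $p\in C$, so the condition $x-p\in N_C(p)$ already forces $p\in C$. Given $p\in C$, the membership $x-p\in N_C(p)$ then means, again by definition, that
\[
\langle x-p,\,c-p\rangle\leq 0\quad\text{for all }c\in C.
\]

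Next I would compare this with Fact~\ref{fact:projection}(ii), which says that $p=P_C(x)$ if and only if $p\in C$ and $\langle c-p,\,x-p\rangle\leq 0$ for every $c\in C$. Since the inner product is symmetric, this is literally the same pair of conditions I obtained above, so the two statements are equivalent. Both implications ($\Rightarrow$ and $\Leftarrow$) are thus handled simultaneously; in particular, no separate argument for $p\in C$ is required because it is built into the definition of $N_C$.

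I do not expect any genuine obstacle here, since Fact~\ref{fact:projection} is already invoked from \cite{BC11}; the only thing to be careful about is not to overlook the convention that $N_C(p)=\emptyset$ for $p\notin C$, which makes the forward implication strictly cleaner to state. A one-line proof simply citing Fact~\ref{fact:projection}(ii) together with the definition of the normal cone suffices.
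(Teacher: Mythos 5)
Your proof is correct: unpacking the definition of $N_C(p)$ (including the convention that $N_C(p)=\emptyset$ for $p\notin C$) and invoking the symmetry of the inner product reduces the statement exactly to Fact~\ref{fact:projection}(ii). The paper itself only cites \cite[Proposition~6.46]{BC11} for this fact, and your one-line derivation is precisely the standard argument behind that citation, so there is nothing to add.
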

\begin{proof}
	See, e.g., \cite[Proposition~6.46]{BC11}.
\end{proof}

The following notion, coined by Chui, Deutsch and Ward in~\cite{CDW90,CDW92} and developed by Deutsch, Li and Ward in~\cite{DLW97}, has been widely studied in the literature, see also~\cite{D01}.

\begin{definition}\label{def:strong_CHIP}
	Let $C$ and $D$ be two closed and convex subsets of $\Hi$. The pair of sets $\{C,D\}$ is said to have the \emph{strong conical hull intersection property} (or the \emph{strong CHIP}) at $x\in C\cap D$ if
	$$
	N_{C\cap D}(x)=N_C(x)+N_D(x).
	$$
	We say $\{C,D\}$ has the strong CHIP if it has the strong CHIP at each $x\in C\cap D$.
\end{definition}

For relationships between strong CHIP and the so-called \emph{bounded linear regularity} property in Euclidean spaces, which plays an important role in the rate of convergence of projection algorithms, see~\cite{BBT00,KLT16}.

Next we show a sufficient condition for the strong CHIP in terms of the epigraph of the support function.
Recall that, given a nonempty subset $C$ of~$\Hi$, the \emph{support function} $\sigma_C$ is defined by $\sigma_C(x) = \sup_{c\in C} \langle c,x\rangle$ for $x\in\Hi$. The \emph{epigraph} of a function $f:\Hi\to\mathbb{R}\cup\{+\infty\}$ is the set $\epi f$ defined by
$$\epi f=\{(x,r)\in\Hi\times\mathbb{R}\mid f(x)\leq r\}.$$

\begin{fact}\label{fact:normal_cone_intersection_formula}
	Let $C$ and $D$ be two closed and convex subsets of $\Hi$. Then $\{C,D\}$ has the strong CHIP if one of the following conditions hold:
	\begin{itemize}
		\item[(i)] If the set $\epi\sigma_C+ \epi\sigma_D$ is weakly closed (which holds e.g. if $(\inte D)\cap C\neq\emptyset$, $0\in \core(C-D)$ or $\cone(C-D)$ is a closed subspace);
        \item[(ii)] If $\Hi$ is finite dimensional and $(\rint C)\cap(\rint D)\neq\emptyset$.
    \end{itemize}
\end{fact}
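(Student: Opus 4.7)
The plan is to reduce the strong CHIP identity $N_{C\cap D}(x)=N_C(x)+N_D(x)$ to an epigraphical equality for the support functions of the sets involved, to which Fenchel duality applies. The inclusion $N_C(x)+N_D(x)\subseteq N_{C\cap D}(x)$ is always valid and follows directly from the definition of the normal cone by adding two inequalities, so the whole difficulty lies in the reverse inclusion.

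For the reverse inclusion, I would use the elementary observation that, whenever $x\in C$, one has $u\in N_C(x)$ if and only if $\sigma_C(u)=\langle u,x\rangle$, i.e., if and only if $(u,\langle u,x\rangle)\in\epi\sigma_C$ (since the inequality $\sigma_C(u)\geq\langle u,x\rangle$ is automatic for $x\in C$). The same characterisation applies to $D$ and to $C\cap D$. Therefore, the identity $\epi\sigma_{C\cap D}=\epi\sigma_C+\epi\sigma_D$ would allow me to decompose any $u\in N_{C\cap D}(x)$ as $u=u_1+u_2$ with $(u_1,r_1)\in\epi\sigma_C$ and $(u_2,r_2)\in\epi\sigma_D$ such that $r_1+r_2=\langle u,x\rangle$; the bounds $r_1\geq\sigma_C(u_1)\geq\langle u_1,x\rangle$ and $r_2\geq\sigma_D(u_2)\geq\langle u_2,x\rangle$ then force equalities throughout and yield $u_1\in N_C(x)$ and $u_2\in N_D(x)$. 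The general Fenchel calculus (coming from $\sigma_{C\cap D}=(\iota_C+\iota_D)^{*}$, where $\iota$ denotes the indicator function) furnishes the formula $\epi\sigma_{C\cap D}=\overline{\epi\sigma_C+\epi\sigma_D}^{w}$, so the weak closedness assumption in (i) removes the closure and delivers exactly the equality required above.

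It remains to verify that each concrete qualification in the parenthesis of (i), as well as the condition of (ii), forces this weak closedness of $\epi\sigma_C+\epi\sigma_D$. The conditions $(\inte D)\cap C\neq\emptyset$, $0\in\core(C-D)$, and ``$\cone(C-D)$ is a closed subspace'' are the classical interior-point and Attouch--Brezis type qualifications: the first implies $0\in\inte(C-D)$ and hence $0\in\core(C-D)$, while the third is Attouch--Brezis itself, and each is known to guarantee the closedness of the epigraph sum. For (ii), in finite dimension the relative interior condition $(\rint C)\cap(\rint D)\neq\emptyset$ gives $0\in\rint(C-D)$, and standard finite-dimensional convex analysis (the Moreau--Rockafellar sum formula with relative interiors) then yields the strong CHIP directly. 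The principal obstacle is this last chain of verifications: rather than re-deriving the closedness-of-epigraph-sum theorems, I would cite the corresponding statements in the convex analysis references already used by the paper, such as \cite{BC11,D01}.
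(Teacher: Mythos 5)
Your argument is correct and is essentially the proof the paper relies on: the paper itself only cites \cite[Theorem~3.1, Proposition~3.1]{BJ05} for (i) and \cite[Corollary~23.8.1]{RCK70} for (ii), and your reduction of the strong CHIP to the equality $\epi\sigma_{C\cap D}=\epi\sigma_C+\epi\sigma_D$ via the characterisation $u\in N_C(x)\Leftrightarrow(u,\langle u,x\rangle)\in\epi\sigma_C$, together with the Fenchel-duality closure formula, is exactly the content of those cited results. The only point worth making explicit is that the closure formula $\epi\sigma_{C\cap D}=\overline{\epi\sigma_C+\epi\sigma_D}$ presupposes $C\cap D\neq\emptyset$, which is harmless since the strong CHIP is vacuous otherwise.
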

\begin{proof}
	See~\cite[Theorem~3.1, Proposition~3.1]{BJ05}, \cite[Corollary~23.8.1]{RCK70}.
\end{proof}

Finally, we present some useful results that characterize the strong CHIP for closed subspaces.

\begin{fact}\label{fact:subspaces_normalcone}
	Let $M_1, M_2\subseteq\Hi$ be closed subspaces. Then the following hold:
	\begin{itemize}
		\item[(i)] For all $x\in M_1$, one has $N_{M_1}(x)=M_1^\bot$;
		\item[(ii)] $\left(M_1\cap M_2\right)^\bot=\overline{M_1^\bot + M_2^\bot}$.
	\end{itemize}
\end{fact}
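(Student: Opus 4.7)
The plan is to prove the two statements separately using the definition of the normal cone and standard properties of orthogonal complements of closed subspaces.

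For (i), I would argue directly from the definition. The inclusion $M_1^\bot \subseteq N_{M_1}(x)$ is immediate: for any $u \in M_1^\bot$ and any $c \in M_1$, the vector $c-x$ lies in $M_1$ (because $M_1$ is a subspace and $x \in M_1$), so $\langle u, c-x \rangle = 0 \leq 0$. For the reverse inclusion, suppose $u \in N_{M_1}(x)$, so $\langle u, c-x \rangle \leq 0$ for every $c \in M_1$. Given an arbitrary $y \in M_1$, both $x+y$ and $x-y$ belong to $M_1$; substituting these for $c$ yields $\langle u, y \rangle \leq 0$ and $-\langle u, y \rangle \leq 0$, whence $\langle u, y \rangle = 0$. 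Since $y \in M_1$ was arbitrary, $u \in M_1^\bot$. This step is routine.

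For (ii), I would invoke the classical identity $L^{\bot\bot} = L$ for any closed subspace $L \subseteq \Hi$. The inclusion $\overline{M_1^\bot + M_2^\bot} \subseteq (M_1 \cap M_2)^\bot$ is direct: for $u_1 \in M_1^\bot$, $u_2 \in M_2^\bot$ and $x \in M_1 \cap M_2$, one has $\langle u_1+u_2, x \rangle = 0$, so $M_1^\bot + M_2^\bot \subseteq (M_1 \cap M_2)^\bot$, and taking closures preserves the inclusion because $(M_1 \cap M_2)^\bot$ is already closed. For the reverse inclusion, I would show that the two closed subspaces $(M_1 \cap M_2)^\bot$ and $\overline{M_1^\bot + M_2^\bot}$ share the same orthogonal complement and then appeal to bi-orthogonality. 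On one hand, $((M_1 \cap M_2)^\bot)^\bot = M_1 \cap M_2$. On the other hand, $(\overline{M_1^\bot + M_2^\bot})^\bot = (M_1^\bot + M_2^\bot)^\bot$, and a one-line argument from the definition gives $(M_1^\bot + M_2^\bot)^\bot = (M_1^\bot)^\bot \cap (M_2^\bot)^\bot = M_1 \cap M_2$. Since two closed subspaces with equal orthogonal complements coincide, the claim follows.

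The only mildly delicate point is the necessity of taking closure on the right-hand side of (ii): in infinite dimensions the algebraic sum $M_1^\bot + M_2^\bot$ need not itself be closed. The bi-orthogonality trick sidesteps this issue cleanly because the orthogonal complement of any subset is automatically closed, so nothing is lost by passing to the closure. I do not anticipate any genuine obstacle; both parts are standard Hilbert space facts that can be recorded with short, self-contained arguments rather than by citing a reference.
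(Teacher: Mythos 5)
Your proposal is correct. The paper does not actually prove this fact; it simply cites \cite[Theorems~4.5 and 4.6]{D01}, so there is no in-paper argument to compare against beyond the reference. Your self-contained proofs are the standard ones underlying those cited results: part (i) via the symmetric substitution $c = x \pm y$ to upgrade the inequality $\langle u, c-x\rangle \leq 0$ to the equality $\langle u, y\rangle = 0$, and part (ii) via bi-orthogonality, using $L^{\bot\bot} = L$ for closed subspaces together with the elementary identity $(M_1^\bot + M_2^\bot)^\bot = M_1 \cap M_2$. You correctly identify the one delicate point, namely that $M_1^\bot + M_2^\bot$ need not be closed in infinite dimensions, and the bi-orthogonality argument handles it properly since orthogonal complements are automatically closed. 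No gaps.
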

\begin{proof}
	See, e.g., \cite[Theorem~4.5 and Theorem~4.6]{D01}.
\end{proof}

\begin{definition}\label{def:F_angle}
	Let $U,V$ be two closed subspaces in $\Hi$. The \emph{Friedrichs angle} between $U$ and $V$ is the angle in $[0,\frac{\pi}{2}]$ whose cosine is
	$$
	c_F(U,V):=\sup\left\{|\langle u, v \rangle| \, : \, u\in U\cap(U\cap V)^\bot, v\in V\cap(U\cap V)^\bot, \|u\|\leq 1,\|v\|\leq 1 \right\}.
	$$
\end{definition}

\begin{fact}\label{fact:chip_subspaces}
	Let $U,V$ be two closed subspaces in $\Hi$. Then
    $$\{U,V\}\text{ has the strong CHIP}\iff U+V\text{ is closed}\iff U^\bot+V^\bot\text{ is closed}\iff c_F(U,V)<1.$$
	In particular, the latter holds if $U$ or $V$ has finite dimension or finite codimension.
\end{fact}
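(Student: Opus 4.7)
The plan is to unwind each equivalence via the normal‐cone machinery from Fact~\ref{fact:subspaces_normalcone} and then invoke a classical characterization of closedness of sums of subspaces via the Friedrichs angle.

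First I would reduce strong CHIP to a statement about closedness of $U^\bot+V^\bot$. Since $U$, $V$, and $U\cap V$ are closed subspaces, Fact~\ref{fact:subspaces_normalcone}(i) gives $N_U(x)=U^\bot$, $N_V(x)=V^\bot$, and $N_{U\cap V}(x)=(U\cap V)^\bot$ for every $x\in U\cap V$. Hence the strong CHIP condition at any (equivalently, every) such $x$ collapses to the single identity
\begin{equation*}
(U\cap V)^\bot = U^\bot + V^\bot.
\end{equation*}
Combining this with Fact~\ref{fact:subspaces_normalcone}(ii), which tells us $(U\cap V)^\bot=\overline{U^\bot+V^\bot}$, yields
\begin{equation*}
\{U,V\}\text{ has the strong CHIP}\iff U^\bot+V^\bot\text{ is closed}.
\end{equation*}

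Next, I would pass from $U^\bot+V^\bot$ closed to $U+V$ closed. This is a standard duality result: applying Fact~\ref{fact:subspaces_normalcone}(ii) to the pair $\{U^\bot,V^\bot\}$ (and using $M^{\bot\bot}=M$ for closed subspaces) gives $(U^\bot\cap V^\bot)^\bot=\overline{U+V}$, and one obtains the symmetric statement that $U+V$ is closed if and only if $U^\bot+V^\bot$ is closed. A convenient citation is \cite[Theorem~4.8]{D01}, where this equivalence is proved directly.

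For the equivalence with $c_F(U,V)<1$, I would appeal to the well-known characterization due to Deutsch (see \cite[Theorem~9.35]{D01}): the Friedrichs angle satisfies $c_F(U,V)<1$ precisely when $U+V$ is a closed subspace. Chaining this with the two equivalences already established completes the three-way biconditional.

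Finally, for the last clause, I would show that finite dimension or finite codimension of one of the subspaces forces $U+V$ to be closed. If, say, $\dim U<\infty$, then $U+V$ is the algebraic sum of a finite-dimensional subspace and a closed subspace, which is always closed (this is essentially the fact that the projection of $U$ onto the quotient $\Hi/V$ is a finite-dimensional, hence closed, subspace, whose preimage is $U+V$). The finite codimension case follows by duality: if $U$ has finite codimension then $U^\bot$ has finite dimension, so $U^\bot+V^\bot$ is closed by the previous argument, and then $U+V$ is closed by the duality step. No single step looks hard — the only subtlety worth being careful about is the duality between closedness of $U+V$ and of $U^\bot+V^\bot$, which I would quote from \cite{D01} rather than reproving.
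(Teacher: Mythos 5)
Your argument is correct and follows essentially the same route as the paper, which simply cites Fact~\ref{fact:subspaces_normalcone} together with Deutsch's results (Theorem~9.35 and Corollary~9.37 of~\cite{D01}) for the closedness--duality--angle equivalences; your reduction of strong CHIP to the identity $(U\cap V)^\bot=U^\bot+V^\bot$ and thence to closedness of $U^\bot+V^\bot$ is exactly the intended use of Fact~\ref{fact:subspaces_normalcone}. The only nitpick is the citation label for the duality step (it is part of~\cite[Theorem~9.35]{D01} rather than Theorem~4.8), but the mathematics is sound.
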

\begin{proof}
	See, e.g., \cite[Therorem~9.35 and Corollary~9.37]{D01} and Fact~\ref{fact:subspaces_normalcone}.
\end{proof}

\section{The averaged alternating modified reflections operator}\label{sec:operator}

We begin this section with the following simple result that motivates the definition of what we call a \emph{modified reflection}.
\begin{proposition}\label{prop:1}
	Let $D$ be a nonempty subset of $\Hi$ and let $T:D\mapsto\Hi$. If $T$ is firmly nonexpansive, then $2\beta T-I$ is nonexpansive for any $\beta\in\,]0,1]$.
\end{proposition}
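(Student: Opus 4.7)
The approach is a direct computation using the inner product characterization of firm nonexpansiveness supplied in the definition. I would fix arbitrary $x,y\in D$, set $u:=x-y$ and $v:=T(x)-T(y)$, and aim to establish
\begin{equation*}
\|(2\beta T-I)(x)-(2\beta T-I)(y)\|^2=\|2\beta v-u\|^2\leq\|u\|^2=\|x-y\|^2.
\end{equation*}
Expanding the middle term gives $\|2\beta v-u\|^2=4\beta^2\|v\|^2-4\beta\langle u,v\rangle+\|u\|^2$, so the desired inequality reduces to $\beta\|v\|^2\leq\langle u,v\rangle$.

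By the equivalent formulation of firm nonexpansiveness stated in Definition~2.2(ii), we have $\langle u,v\rangle\geq\|v\|^2$. Since $\beta\in\,]0,1]$, it follows that $\beta\|v\|^2\leq\|v\|^2\leq\langle u,v\rangle$, which closes the argument.

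There is essentially no obstacle here: the result is a one-line manipulation once firm nonexpansiveness is written in its inner-product form. The only point worth noting is that the hypothesis $\beta\leq 1$ is used exactly at the final step to pass from $\|v\|^2\leq\langle u,v\rangle$ to $\beta\|v\|^2\leq\langle u,v\rangle$; for $\beta=1$ one recovers the classical fact (Fact~2.2(i)) that $2T-I$ is nonexpansive whenever $T$ is firmly nonexpansive, so the proposition is genuinely a mild generalization of that statement.
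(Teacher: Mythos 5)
Your proof is correct and is essentially the paper's argument with the citations unpacked: the inequality $\beta\|v\|^2\leq\langle u,v\rangle$ that you reduce to is precisely the statement that $\beta T$ is firmly nonexpansive, and your norm expansion of $\|2\beta v-u\|^2$ reproves the equivalence in Fact~\ref{fact:firmly_nonexp}(i). The paper simply cites these two facts (``$\beta T$ is firmly nonexpansive, now apply Fact~\ref{fact:firmly_nonexp}(i)'') where you carry out the computation explicitly; both uses of $\beta\leq 1$ occur at the same point.
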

\begin{proof}
	Since $T$ is firmly nonexpansive, the operator $\beta T$ is firmly nonexpansive for any $\beta\in\,]0,1]$. The result follows from Fact~\ref{fact:firmly_nonexp}(i).
\end{proof}

\begin{definition}
	Let $C$ be a nonempty closed convex set. Given any $\beta\in\,]0,1]$, the operator $2\beta P_C-I$ is called a \emph{modified reflector} operator. The case $\beta=1$ is known as the \emph{reflector} and is denoted by $R_C:=2P_C-I$.
\end{definition}

\begin{remark}\label{rem:mod_ref_nonexp}
	For any $\beta\in\,]0,1]$ and any $x\in\Hi$, one has
	$$(2\beta P_A-I)(x)=\beta R_A(x)+\beta x-x=\beta R_A(x)+(1-\beta)(-x);$$
	that is, $(2\beta P_A-I)(x)$ is a convex combination of $R_A(x)$ and $-x$ (see Figure~\ref{fig:T_alphabeta}).
\end{remark}

The next result shows that the modified reflector operators have a unique fixed point.

\begin{proposition}\label{prop:fixed_mod_ref}
	Let $C\subseteq \Hi$ be nonempty, closed and convex, and let $\beta\in\,]0,1[$. Then
	$$
	\Fix(2\beta P_C-I)=\left\{ \beta P_C(0) \right\}.
	$$
\end{proposition}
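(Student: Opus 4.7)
The plan is as follows. First I would unwind the fixed-point equation: $x\in\Fix(2\beta P_C-I)$ iff $x=2\beta P_C(x)-x$, i.e.\ $x=\beta P_C(x)$. In particular, setting $p:=P_C(x)$, a fixed point must satisfy $x=\beta p$ with $p\in C$, so the question reduces to determining for which $p\in C$ one has $p=P_C(\beta p)$.

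Next I would identify that unique $p$ by applying the variational characterization of the projection, Fact~\ref{fact:projection}(ii). The condition $p=P_C(\beta p)$ reads
\begin{equation*}
\langle c-p,\beta p-p\rangle\leq 0\quad\text{for all }c\in C,
\end{equation*}
that is, $(\beta-1)\langle c-p,p\rangle\leq 0$ for all $c\in C$. Since $\beta-1<0$, this is equivalent to $\langle c-p,-p\rangle\leq 0$ for all $c\in C$, which by Fact~\ref{fact:projection}(ii) is exactly the condition $p=P_C(0)$. Hence any fixed point must be of the form $x=\beta P_C(0)$.

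For the reverse inclusion, I need to verify that $x^\star:=\beta P_C(0)$ really is fixed, i.e.\ that $P_C(\beta P_C(0))=P_C(0)$. This is a direct application of Fact~\ref{fact:projection}(iii) with $x=0$ and $\lambda=1-\beta\in\,]0,1[\,\subset[0,+\infty[$, giving
\begin{equation*}
P_C\bigl(P_C(0)+(1-\beta)(0-P_C(0))\bigr)=P_C(\beta P_C(0))=P_C(0),
\end{equation*}
so that $2\beta P_C(x^\star)-x^\star=2\beta P_C(0)-\beta P_C(0)=\beta P_C(0)=x^\star$.

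I do not foresee any real obstacle: the only subtlety is keeping track of the sign in the second step (using $\beta-1<0$ to flip the inequality), and making sure the parameter $\lambda=1-\beta$ lies in $[0,+\infty[$, which is guaranteed precisely because $\beta\in\,]0,1[$. Note that the argument would still go through for $\beta=1$ only if $0\in C$ (giving $\Fix R_C=C$), which is why the statement restricts to $\beta<1$.
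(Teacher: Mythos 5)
Your proof is correct, and it differs from the paper's in one half. The existence direction is identical: both you and the authors verify $P_C(\beta P_C(0))=P_C(0)$ by writing $\beta P_C(0)=P_C(0)+(1-\beta)\bigl(0-P_C(0)\bigr)$ and invoking Fact~\ref{fact:projection}(iii). For uniqueness, however, the paper simply observes that $\beta P_C$ is a contraction (being $\beta<1$ times the nonexpansive map $P_C$, via Fact~\ref{fact:projection_firmly_nonexapnsive}), so it has at most one fixed point; you instead identify the fixed point directly, using the variational characterization in Fact~\ref{fact:projection}(ii) to show that $p=P_C(\beta p)$ with $p\in C$ forces $\langle c-p,-p\rangle\leq 0$ for all $c\in C$, i.e.\ $p=P_C(0)$. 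Both arguments are valid. The contraction route is shorter and more abstract; your route is more explicit, makes the role of the hypothesis $\beta<1$ visible at the exact point where the inequality is divided by $\beta-1$, and incidentally re-derives existence-plus-uniqueness in one computation. One small quibble with your closing aside: for $\beta=1$ one has $\Fix(2P_C-I)=\Fix P_C=C$ for \emph{any} nonempty closed convex $C$, not only when $0\in C$; what fails at $\beta=1$ is the conclusion of the proposition (a singleton), not the identity $\Fix R_C=C$. This does not affect the proof itself.
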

\begin{proof}
	First, notice that $y\in\Fix(2\beta P_C-I)$ if and only if $\beta P_C(y)=y$; that is, the set of $\Fix(2\beta P_C-I)=\Fix(\beta P_C)$. Since $\beta P_C(0)=P_C(0)+(1-\beta)(0-P_C(0))$, with $1-\beta>0$, we deduce from Fact~\ref{fact:projection}(iii) that $P_C(\beta P_C(0))=P_C(0)$. Thus, $\beta P_C(0)\in\Fix(2\beta P_C-I)$. The uniqueness directly follows from the fact that $\beta P_C$ is a contraction, by Fact~\ref{fact:projection_firmly_nonexapnsive}.
\end{proof}

\begin{definition}
	Let $A,B\subseteq\Hi$ be nonempty, closed and convex sets. Given $\alpha,\beta\in\,]0,1[$,  we define the \emph{averaged alternating modified reflections (AAMR) operator} $T_{A,B,\alpha,\beta}:\Hi \mapsto \Hi$ as
	\begin{equation}\label{eq:def_GDR}
	T_{A,B,\alpha,\beta}:=(1-\alpha)I+\alpha(2\beta P_B-I)(2\beta P_A - I).
	\end{equation}
	Where there is no ambiguity, we will abbreviate the notation of the operator $T_{A,B,\alpha,\beta}$ by~$T_{\alpha,\beta}$.	
\end{definition}

\begin{proposition}\label{prop:nonexpansiveness_gdr}
	If $A,B\subseteq\Hi$ are nonempty, closed and convex sets, then $T_{\alpha,\beta}$ is $\alpha$-averaged for all $\alpha,\beta\in\,]0,1[$, and thus nonexpansive and strictly quasi-nonexpansive. Moreover, if $\alpha\in\,]0,1/2]$, then $T_{\alpha,\beta}$ is firmly nonexpansive.
\end{proposition}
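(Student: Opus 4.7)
The plan is to unpack the definition of $T_{\alpha,\beta}$ and recognize it immediately as a convex combination of the identity with a nonexpansive operator, then invoke Fact~\ref{fact:firmly_nonexp}(ii) to harvest all three conclusions at once.

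First, I would note that by Fact~\ref{fact:projection_firmly_nonexapnsive}, both $P_A$ and $P_B$ are firmly nonexpansive. Hence Proposition~\ref{prop:1} applies (with $\beta \in\,]0,1]\supset\,]0,1[$) and yields that the two modified reflectors $2\beta P_A - I$ and $2\beta P_B - I$ are each nonexpansive. Since the composition of two nonexpansive operators is nonexpansive, the operator
\[
R := (2\beta P_B - I)(2\beta P_A - I)
\]
is nonexpansive.

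Next, rewriting the definition of $T_{\alpha,\beta}$ in~\eqref{eq:def_GDR} gives $T_{\alpha,\beta} = (1-\alpha)I + \alpha R$ with $\alpha\in\,]0,1[$ and $R$ nonexpansive, which is precisely the definition of an $\alpha$-averaged operator. Applying Fact~\ref{fact:firmly_nonexp}(ii) then yields both that $T_{\alpha,\beta}$ is nonexpansive and that it is strictly quasi-nonexpansive. The same fact also gives the last assertion: when $\alpha\in\,]0,1/2]$, $\alpha$-averagedness upgrades to firm nonexpansiveness.

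There is no real obstacle here; everything follows by assembling the previously stated facts in the right order. The only thing to double-check is that $\beta\in\,]0,1[$ indeed lies in the range $]0,1]$ required by Proposition~\ref{prop:1}, which is immediate.
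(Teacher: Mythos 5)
Your proposal is correct and follows exactly the route the paper intends: the paper's own proof is the one-line remark that the result ``is straightforward, in view of Proposition~\ref{prop:1} and Fact~\ref{fact:firmly_nonexp}(ii)'', and your argument simply spells out those same steps (modified reflectors nonexpansive by Proposition~\ref{prop:1}, their composition nonexpansive, hence $T_{\alpha,\beta}$ is $\alpha$-averaged by definition, and Fact~\ref{fact:firmly_nonexp}(ii) delivers all three conclusions). No gaps.
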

\begin{proof}
	It is straightforward, in view of Proposition~\ref{prop:1} and Fact~\ref{fact:firmly_nonexp}(ii).
\end{proof}

A geometric interpretation of the AAMR operator $T_{A,B,\alpha,\beta}$ is shown in Figure~\ref{fig:T_alphabeta}, which was created with \texttt{Cinderella}~\cite{Cinderella}. It is important to emphasize that we require $\beta<1$ in the definition of the AAMR operator. The case $\beta=1$ in~\eqref{eq:def_GDR} corresponds with the Douglas--Rachford operator $DR_{A,B,\alpha}=(1-\alpha)I+\alpha R_B R_A$, whose behavior is remarkably different. In particular, one has $P_A(\Fix DR_{A,B,\alpha})=A\cap B$, while $P_A(\Fix T_{\alpha,\beta})\subsetneq A\cap B$, as we show in the next remark.

\begin{figure}[ht]
	\begin{center}
		\includegraphics[width=0.95\textwidth]{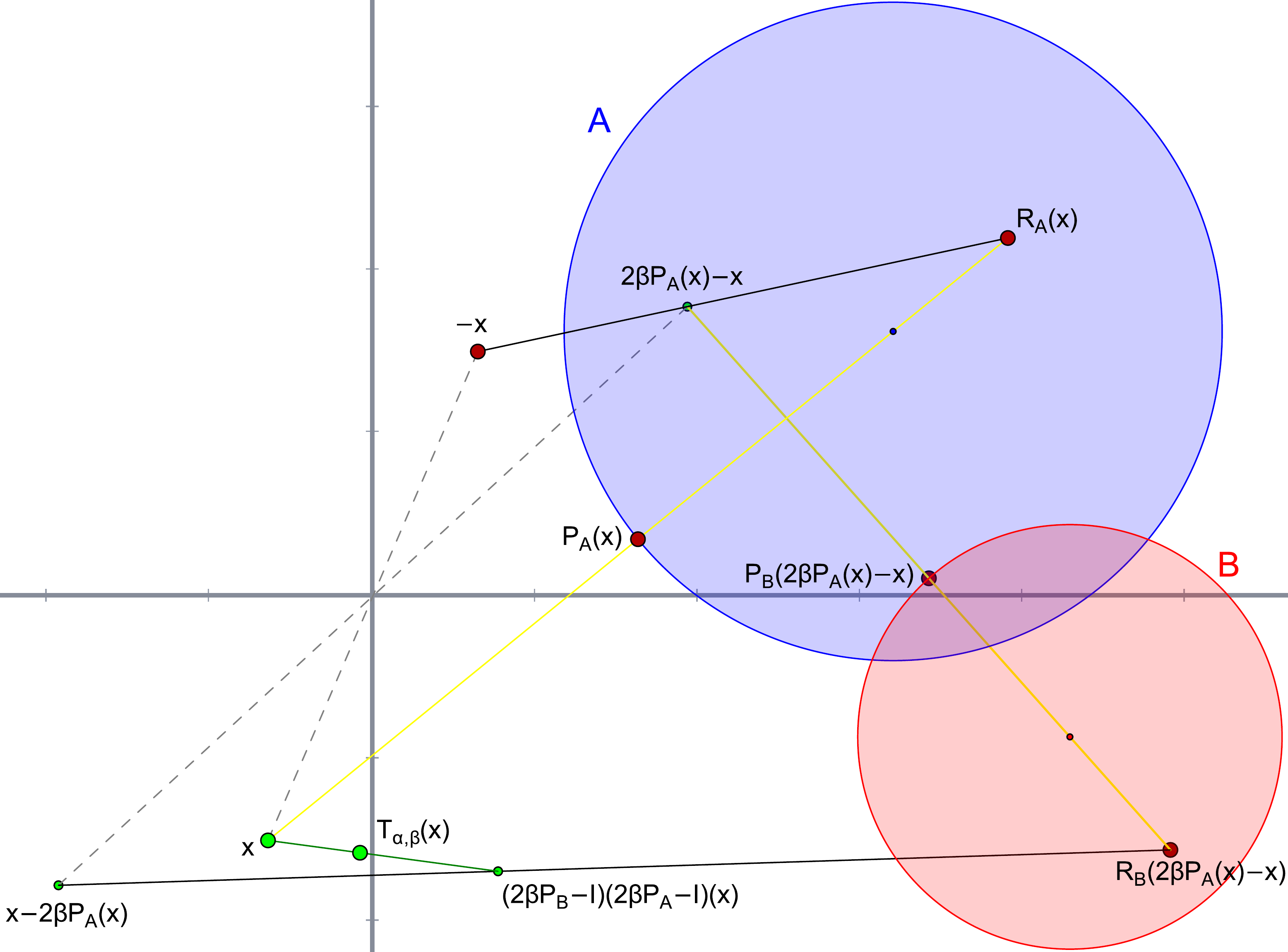}
	\end{center}
	\caption{Geometric interpretation of the modified reflector operator $2\beta P_A-I$ and the AAMR operator $T_{\alpha,\beta}$.}\label{fig:T_alphabeta}
\end{figure}

\begin{remark}\label{rem:Fix_T}
	\emph{(i)} Observe that $\Fix T_{\alpha,\beta}=\Fix((2\beta P_B-I)(2\beta P_A-I))$ for all $\alpha\in\,]0,1[\,$. In fact, $x\in\Fix T_{\alpha,\beta}$ if and only if
	\begin{align*}
	x&=T_{\alpha,\beta}( x)=(1-\alpha)x+\alpha(2\beta P_B-I)(2\beta P_A( x)-x)\\
	&=(1-\alpha)x+2\alpha\beta P_B(2\beta P_A(x)-x)-\alpha(2\beta P_A( x)-x)\\
	&=x+2\alpha\beta P_B(2\beta P_A( x)-x)-2\alpha\beta P_A(x);
	\end{align*}
	that is,
	\begin{equation}\label{eq:fixed_point}
	x\in\Fix T_{\alpha,\beta} \Leftrightarrow P_B(2\beta P_A(x)-x)=P_A(x).
	\end{equation}
	As a consequence, we have $P_A(\Fix T_{\alpha,\beta})\subset A\cap B$. In general, though, $P_A(\Fix T_{\alpha,\beta})\neq A\cap B$. For a simple example, consider $A:=\mathbb{R}^2$ and $B:=\{(x_1,x_2)\in\mathbb{R}^2\mid x_1=1\}$, and choose any $\beta\in\,]0,1[\,$. Then, it can be easily checked using~\eqref{eq:fixed_point} that $\Fix T_{\alpha,\beta}=\{(1,0)\}$ and
	$$P_A(\Fix T_{\alpha,\beta})=P_A(\{(1,0)\})=\{(1,0)\}\neq A\cap B=B.$$
	
	\emph{(ii)} 	As a consequence of Proposition~\ref{prop:fixed_mod_ref}, $\Fix(2\beta P_A-I)\cap\Fix(2\beta P_B-I)=\emptyset$ whenever $P_A(0)\neq P_B(0)$. Hence, in general, $$\Fix T_{\alpha,\beta}=\Fix\left((2\beta P_B-I)(2\beta P_A-I)\right)\neq \Fix(2\beta P_A-I)\cap\Fix(2\beta P_B-I).$$ For instance, consider the same example as in~\emph{(i)}. By Proposition~\ref{prop:fixed_mod_ref}, we have $\Fix(2\beta P_A-I)=\{(0,0)\}$ and $\Fix(2\beta P_B-I)=\{(\beta,0)\}$, while $\Fix T_{\alpha,\beta}=\{(1,0)\}$.
	
	Therefore, the operator $T_{A,B,\alpha,\beta}$ does not belong to the broad family of operators studied by Reich and Zalas in~\cite{RZ15}, which covers many projection algorithms, because they consider the general problem of finding $x\in \bigcap_{i=1}^n\Fix U_i\neq\emptyset$ for some quasi-nonexpansive operators $U_i:\Hi\to\Hi$.
\end{remark}

The following result shows that, in fact, the fixed points of the AAMR operator $T_{\alpha,\beta}$ are very special.

\begin{proposition}\label{prop:projected_fixT}
	Let $A,B\subseteq\Hi$ be nonempty, closed and convex sets, and let $\alpha,\beta\in\,]0,1[$. If $x\in \Fix T_{\alpha,\beta}$, then $A\cap B\neq \emptyset$ and
	$$
	P_A(x)=P_{A\cap B}(0).
	$$
\end{proposition}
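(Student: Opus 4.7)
The plan is to unpack the fixed-point condition via the characterization already given in Remark~\ref{rem:Fix_T}(i), namely that $x\in\Fix T_{\alpha,\beta}$ is equivalent to $P_B(2\beta P_A(x)-x)=P_A(x)$. Setting $p:=P_A(x)$, we immediately get $p\in A$ by definition of the projector, and the fixed-point equation forces $p=P_B(2\beta p-x)\in B$. Hence $p\in A\cap B$, which in particular settles the non-emptiness of the intersection.

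Next, I would translate both projection identities into normal-cone inclusions using Fact~\ref{fact:projection_normalcone}: from $p=P_A(x)$ we obtain $x-p\in N_A(p)$, and from $p=P_B(2\beta p-x)$ we obtain $(2\beta p-x)-p=(2\beta-1)p-x\in N_B(p)$.

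The key idea is then to add these two inclusions so that the variable $x$ cancels. Concretely, for any $c\in A\cap B$ the definition of the normal cone yields
\begin{equation*}
\langle x-p,\,c-p\rangle\leq 0\quad\text{and}\quad \langle (2\beta-1)p-x,\,c-p\rangle\leq 0.
\end{equation*}
Summing the two inequalities eliminates $x$ and leaves $\langle (2\beta-2)p,\,c-p\rangle\leq 0$. Since $\beta\in\,]0,1[$ implies $2\beta-2<0$, this is equivalent to $\langle -p,\,c-p\rangle\leq 0$ for all $c\in A\cap B$, i.e., $-p\in N_{A\cap B}(p)$. Applying Fact~\ref{fact:projection_normalcone} once more, this time to the set $A\cap B$ and the point $0$, gives $p=P_{A\cap B}(0)$, which is exactly the conclusion.

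I do not expect any real obstacle here: the proof is essentially a one-line manipulation once the fixed-point equation is rewritten via Remark~\ref{rem:Fix_T}(i) and both projections are read through the normal-cone lens. The only place where the hypothesis $\beta<1$ is genuinely used is in the final step, where strict positivity of $2-2\beta$ is needed to flip the inequality; this is precisely what distinguishes the AAMR operator from the Douglas--Rachford operator ($\beta=1$), for which the same calculation would only yield $0\leq 0$ and thus fail to characterize $P_{A\cap B}(0)$.
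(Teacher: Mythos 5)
Your proof is correct and follows essentially the same route as the paper: both extract $P_A(x)=P_B(2\beta P_A(x)-x)$ from the fixed-point characterization, write the two projection identities as variational inequalities (you phrase them via normal cones, the paper via Fact~\ref{fact:projection}(ii), which is the same thing), add them over $c\in A\cap B$ to cancel $x$, and divide by the strictly positive factor $2(1-\beta)$. Your closing remark about why $\beta=1$ breaks the argument is a nice observation, but the argument itself is the paper's.
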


\begin{proof}
	If $x\in \Fix T_{\alpha,\beta}$, we know by~\eqref{eq:fixed_point} that
	\begin{equation*}
	P_A(x)=P_B(2\beta P_A(x)-x),
	\end{equation*}
	which implies $P_A(x)\in A\cap B$.
	Using twice the characterization of the projections given in Fact~\ref{fact:projection}(ii), we obtain
	\begin{gather}
	\langle y-P_A(x),x-P_A(x) \rangle \leq  0, \quad \forall y\in A, \label{eq:prop:projected_fixT_aux1}\\
	\langle y-P_A(x),2\beta P_A(x)-x-P_A(x) \rangle \leq 0, \quad \forall y\in B. \label{eq:prop:projected_fixT_aux2}
	\end{gather}
	Inequalities~\eqref{eq:prop:projected_fixT_aux1} and~\eqref{eq:prop:projected_fixT_aux2} hold simultaneously for any $y\in A\cap B$. Then, by adding them, we deduce
	\begin{equation*}
	\langle y-P_A(x),-2(1-\beta)P_A(x) \rangle \leq 0, \quad \forall y\in A\cap B.
	\end{equation*}
	As $\beta < 1$, the factor $2(1-\beta)$ is strictly positive and can be removed. Therefore,
	\begin{equation*}
	\langle y-P_A(x),-P_A(x) \rangle \leq 0, \quad \forall y\in A\cap B.
	\end{equation*}
	By Fact~\ref{fact:projection}(ii), we conclude that $P_A(x)=P_{A\cap B}(0)$.
\end{proof}	

In the next theorem we present a \emph{constraint qualification} that characterizes the nonemptiness of the set of fixed points of the AAMR operators.

\begin{theorem}\label{th:fixT_cone}
	Let $A,B\subseteq \Hi$ be nonempty closed and convex sets, and let $\alpha,\beta\in\,]0,1[$. Then,
	$$
	\Fix T_{\alpha,\beta}\neq\emptyset \Leftrightarrow A\cap B\neq\emptyset\text{ and }-P_{A\cap B}(0)\in (N_A+N_B)\left(P_{A\cap B}(0)\right).
	$$
\end{theorem}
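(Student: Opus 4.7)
The plan is to reduce the fixed-point equation to a pair of normal cone conditions via Fact~\ref{fact:projection_normalcone} and equation~\eqref{eq:fixed_point}, using the information $P_A(x)=P_{A\cap B}(0)$ already established in Proposition~\ref{prop:projected_fixT}. Write $p:=P_{A\cap B}(0)$ throughout.

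For the forward implication, suppose $x\in\Fix T_{\alpha,\beta}$. By Proposition~\ref{prop:projected_fixT}, $A\cap B\neq\emptyset$ and $P_A(x)=p$; in particular $p\in A\cap B$. From~\eqref{eq:fixed_point}, $P_B(2\beta p - x)=p$. Applying Fact~\ref{fact:projection_normalcone} to each of these two projections yields
\begin{equation*}
x-p\in N_A(p)\quad\text{and}\quad (2\beta p - x)-p = (2\beta-1)p - x\in N_B(p).
\end{equation*}
Adding the two inclusions gives $-2(1-\beta)p\in N_A(p)+N_B(p)$, and since $\beta\in\,]0,1[\,$ the normal cones absorb the positive scalar $\frac{1}{2(1-\beta)}$, so $-p\in(N_A+N_B)(p)$.

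For the converse, assume $A\cap B\neq\emptyset$ and decompose $-p = u+v$ with $u\in N_A(p)$ and $v\in N_B(p)$. I will produce a fixed point explicitly by reversing the computation above. Set
\begin{equation*}
x := p + 2(1-\beta)u.
\end{equation*}
Then $x-p = 2(1-\beta)u\in N_A(p)$ (since $N_A(p)$ is a cone and $2(1-\beta)>0$), so Fact~\ref{fact:projection_normalcone} gives $P_A(x)=p$. Moreover,
\begin{equation*}
(2\beta p - x) - p = -2(1-\beta)p - 2(1-\beta)u = 2(1-\beta)\bigl(-p-u\bigr) = 2(1-\beta)v\in N_B(p),
\end{equation*}
using $-p=u+v$ in the second equality. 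By Fact~\ref{fact:projection_normalcone} this gives $P_B(2\beta p - x)=p = P_A(x)$, and by the equivalence~\eqref{eq:fixed_point} we conclude $x\in\Fix T_{\alpha,\beta}$.

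There is no real obstacle here beyond bookkeeping the scalar $2(1-\beta)$, which is where the assumption $\beta<1$ is crucially used (both to invoke Proposition~\ref{prop:projected_fixT} implicitly through $P_A(x)=p$ and to scale the cone membership); exactly this factor is what forces the ``$-p$'' on the right-hand side of the constraint qualification, as opposed to a generic element of $\ran(-I)$ on $p$.
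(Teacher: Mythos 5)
Your proof is correct and follows essentially the same route as the paper: both directions reduce the fixed-point characterization~\eqref{eq:fixed_point} to the two normal-cone inclusions via Fact~\ref{fact:projection_normalcone}, with the forward direction summing and rescaling by $\frac{1}{2(1-\beta)}$ and the converse constructing the fixed point $x=P_{A\cap B}(0)+2(1-\beta)d_A$ exactly as in the paper. No gaps.
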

\begin{proof}
	To prove the direct implication, pick any $x\in \Fix T_{\alpha,\beta}$. Then, by Proposition~\ref{prop:projected_fixT}, we have $A\cap B\neq\emptyset$ and
	\begin{equation*}
	P_{A\cap B}(0)=P_A(x)=P_B(2\beta P_A(x)-x).
	\end{equation*}
	Thus, by~Fact~\ref{fact:projection_normalcone}, we deduce
	$$x-P_{A\cap B}(0)\in N_A(P_{A\cap B}(0))$$
	and
	$$(2\beta-1)P_{A\cap B}(0)-x=2\beta P_A(x)-x-P_{A\cap B}(0)\in N_B(P_{A\cap B}(0)).$$
	By taking $d_A:=\frac{1}{2(1-\beta)}\left(x-P_{A\cap B}(0)\right)$ and $d_B:=\frac{1}{2(1-\beta)}\left((2\beta-1) P_{A\cap B}(0) -x\right)$, we get
	\begin{equation*}
	-P_{A\cap B}(0)=d_A+d_B,
	\end{equation*}
	with $d_A\in N_A(P_{A\cap B}(0))$ and $d_B\in N_B(P_{A\cap B}(0))$, as claimed.
	
	To prove the converse implication, assume that $A\cap B\neq\emptyset$, and let $d_A\in N_A(P_{A\cap B}(0))$ and $d_B\in N_B(P_{A\cap B}(0))$ be such that
	\begin{equation}\label{eq:prop_fixT_cone_aux1}
	-P_{A\cap B}(0)=d_A+d_B.
	\end{equation}
	Take
	\begin{equation}\label{eq:x_dA_dB}
	x:=P_{A\cap B}(0)+2(1-\beta)d_A.
	\end{equation}
	As $\beta<1$, we have $2(1-\beta)d_A\in N_A(P_{A\cap B}(0))$. Then, by~Fact~\ref{fact:projection_normalcone}, we get
	\begin{equation}\label{eq:prop_fixT_cone_aux2}
	P_A(x)=P_A\left(P_{A\cap B}(0)+2(1-\beta)d_A\right)=P_{A\cap B}(0).
	\end{equation}
	Hence,
	\begin{equation}\label{eq:prop_fixT_cone_aux3}
	\begin{aligned}
	2\beta P_A(x)-x & =  2\beta P_{A\cap B}(0) - P_{A\cap B}(0)-2(1-\beta)d_A\\
	& =  P_{A\cap B}(0) + 2(1-\beta)(-P_{A\cap B}(0)-d_A).
	\end{aligned}
	\end{equation}
	Now, by combining~\eqref{eq:prop_fixT_cone_aux1} and~\eqref{eq:prop_fixT_cone_aux3}, we have
	\begin{equation}\label{eq:prop_fixT_cone_aux4}
	2\beta P_A(x)-x =  P_{A\cap B}(0) +2(1-\beta)d_B.
	\end{equation}
	Then, we use again~Fact~\ref{fact:projection_normalcone} in~\eqref{eq:prop_fixT_cone_aux4} to obtain
	\begin{equation}\label{eq:prop_fixT_cone_aux5}
	P_B(2\beta P_A(x)-x)=P_{A\cap B}(0).
	\end{equation}
	Finally, from \eqref{eq:prop_fixT_cone_aux2} and \eqref{eq:prop_fixT_cone_aux5}, we deduce
	\begin{equation}\label{eq:prop_fixT_cone_aux6}
	P_A(x)=	P_B(2\beta P_A(x)-x),
	\end{equation}
	which implies $x\in\Fix T_{\alpha,\beta}$, by~\eqref{eq:fixed_point}.
\end{proof}

The following corollary is a direct consequence of Theorem~\ref{th:fixT_cone} and characterizes the nonemptiness of the set of fixed points of the AAMR operators defining our iterative methods.

\begin{corollary}\label{cor:NPM_fixT_cone}
	Let $A,B\subseteq \Hi$ be nonempty closed and convex sets, and let $\alpha,\beta\in\,]0,1[$. Then for any $q\in\Hi$,
	$$
	\Fix T_{A-q,B-q,\alpha,\beta}\neq\emptyset \Leftrightarrow A\cap B\neq\emptyset\text{ and }q-P_{A\cap B}(q)\in (N_A+N_B)\left(P_{A\cap B}(q)\right).
	$$
\end{corollary}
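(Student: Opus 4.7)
The plan is to deduce the corollary by applying Theorem~\ref{th:fixT_cone} to the translated sets $A-q$ and $B-q$, and then using the standard translation identities for intersections, projections, and normal cones.

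First I would record the three translation identities that reduce the corollary to Theorem~\ref{th:fixT_cone}. Namely, for any $q\in\Hi$:
\begin{itemize}
\item[(a)] $(A-q)\cap(B-q)=(A\cap B)-q$, so $(A-q)\cap(B-q)\neq\emptyset$ iff $A\cap B\neq\emptyset$.
\item[(b)] $P_{(A-q)\cap(B-q)}(0)=P_{(A\cap B)-q}(0)=P_{A\cap B}(q)-q$, by Fact~\ref{fact:projection}(iv) applied with $y=-q$ and $x=0$. In particular $-P_{(A-q)\cap(B-q)}(0)=q-P_{A\cap B}(q)$.
\item[(c)] For every $x\in A$, $N_{A-q}(x-q)=N_A(x)$ (immediate from the definition of the normal cone as it only involves differences $c-x$). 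Applying this at $x=P_{A\cap B}(q)$, together with the analogous identity for $B$, yields
$$
(N_{A-q}+N_{B-q})\bigl(P_{A\cap B}(q)-q\bigr)=(N_A+N_B)\bigl(P_{A\cap B}(q)\bigr).
$$
\end{itemize}

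Next, I would apply Theorem~\ref{th:fixT_cone} to the nonempty closed convex sets $A-q$ and $B-q$ in place of $A$ and $B$. This yields
$$
\Fix T_{A-q,B-q,\alpha,\beta}\neq\emptyset\iff (A-q)\cap(B-q)\neq\emptyset\text{ and } -P_{(A-q)\cap(B-q)}(0)\in(N_{A-q}+N_{B-q})\bigl(P_{(A-q)\cap(B-q)}(0)\bigr).
$$
Substituting identities (a)--(c) on the right-hand side immediately converts this into
$$
A\cap B\neq\emptyset\text{ and }q-P_{A\cap B}(q)\in(N_A+N_B)\bigl(P_{A\cap B}(q)\bigr),
$$
which is the desired characterization.

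There is essentially no real obstacle here: the result is genuinely a corollary. The only thing that requires a moment of care is verifying the translation identities (b) and (c) and checking that they are consistent with one another (i.e., that the point at which we evaluate the normal cone on one side really is the translate of the point on the other side). Once those identities are in hand, the corollary is a direct substitution in Theorem~\ref{th:fixT_cone}.
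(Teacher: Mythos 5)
Your proposal is correct and follows essentially the same route as the paper: apply Theorem~\ref{th:fixT_cone} to the translated sets $A-q$ and $B-q$, then convert the conditions back using the identities $(A-q)\cap(B-q)=(A\cap B)-q$, the translation formula for projections from Fact~\ref{fact:projection}(iv), and the translation invariance of normal cones. All three identities are verified correctly, so nothing further is needed.
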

\begin{proof}
	According to Theorem~\ref{th:fixT_cone}, $\Fix T_{A-q,B-q,\alpha,\beta}\neq\emptyset$ if and only if,
	$$
	(A-q)\cap (B-q)\neq\emptyset\quad\text{and}\quad -P_{(A-q)\cap (B-q)}(0)\in (N_{A-q}+N_{B-q})\left(P_{(A-q)\cap (B-q)}(0)\right).
	$$
	Observe that $(A-q)\cap (B-q)=(A\cap B) -q$. Then $(A-q)\cap (B-q)\neq\emptyset \Leftrightarrow A\cap B\neq\emptyset$.
	Now, by Fact~\ref{fact:projection}(iv), we have
	\begin{equation*}
	-P_{(A\cap B)-q}(0)=-P_{A\cap B}(q)+q,
	\end{equation*}
	and then,
	\begin{align*}
	(N_{A-q}+N_{B-q})\left(P_{(A\cap B)-q}(0)\right)&=(N_{A}+N_{B})\left(P_{(A\cap B)-q}(0)+q\right)\\
	&=(N_{A}+N_{B})\left(P_{A\cap B}(q)\right),
	\end{align*}
	which completes the proof.
\end{proof}

The next result shows that, when $P_A(0)=P_B(0)$, any point in the segment with end points $P_A(0)$ and $(2\beta-1)P_A(0)$ is a fixed point of the mapping $T_{\alpha,\beta}$. Thus, if $P_A(0)=P_B(0)\neq 0$, the mapping $T_{\alpha,\beta}$ has multiple fixed points.

\begin{proposition}\label{prop_especial_cases_FixT}
	Let $A,B\subseteq\Hi$ be nonempty, closed and convex, and let $\alpha,\beta\in\,]0,1]$. The following hold.
	\begin{itemize}
		\item[(i)] If $P_A(0)\in B$, then $(2\beta-1)P_A(0)\in \Fix T_{\alpha,\beta}$.
		\item[(ii)] If $P_B(0)\in A$, then $P_B(0)\in \Fix T_{\alpha,\beta}$.
		\item[(iii)] If $P_A(0)=P_B(0)$, then
		$$
		(1-2\lambda(1-\beta))P_A(0)\in \Fix T_{\alpha,\beta} \quad \text{for all } \lambda\in[0,1].
		$$
	\end{itemize}
\end{proposition}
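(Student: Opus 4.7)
The plan is to verify each assertion directly against the characterization~\eqref{eq:fixed_point}, namely that $x\in\Fix T_{\alpha,\beta}$ if and only if $P_B(2\beta P_A(x)-x)=P_A(x)$. The single workhorse identity, obtained by specializing Fact~\ref{fact:projection}(iii) at $x=0$, is that for any nonempty closed convex $C\subseteq\Hi$ and any $\mu\geq 0$,
$$P_C\bigl((1-\mu)P_C(0)\bigr)=P_C(0).$$
Each of the three candidate fixed points will be written in this form so that both $P_A$ and $P_B$ can be computed without further effort.

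For part (i), I would set $x:=(2\beta-1)P_A(0)=(1-2(1-\beta))P_A(0)$. Because $2(1-\beta)\geq 0$, the identity above yields $P_A(x)=P_A(0)$, whence a direct simplification gives $2\beta P_A(x)-x=P_A(0)$. Since $P_A(0)\in B$ by hypothesis, $P_B$ fixes this point, so $P_B(2\beta P_A(x)-x)=P_A(0)=P_A(x)$ and~\eqref{eq:fixed_point} holds. For part (ii), take $x:=P_B(0)$. Since $P_B(0)\in A$, we have $P_A(x)=P_B(0)$, and thus $2\beta P_A(x)-x=(2\beta-1)P_B(0)=(1-2(1-\beta))P_B(0)$; applying the identity to $C=B$ gives $P_B(2\beta P_A(x)-x)=P_B(0)=P_A(x)$, as required.

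For part (iii), set $x_\lambda:=(1-2\lambda(1-\beta))P_A(0)$ with $\lambda\in[0,1]$. Since the scalar $2\lambda(1-\beta)$ is nonnegative, the identity gives $P_A(x_\lambda)=P_A(0)$. A short calculation then rewrites
$$2\beta P_A(x_\lambda)-x_\lambda=\bigl(2\beta-1+2\lambda(1-\beta)\bigr)P_A(0)=\bigl(1-2(1-\lambda)(1-\beta)\bigr)P_B(0),$$
after substituting $P_A(0)=P_B(0)$. Because $(1-\lambda)(1-\beta)\geq 0$, the identity applied to $C=B$ yields $P_B(2\beta P_A(x_\lambda)-x_\lambda)=P_B(0)=P_A(x_\lambda)$, so \eqref{eq:fixed_point} again holds.

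There is no substantive obstacle: the proof is a direct verification. The only care needed is bookkeeping of the scalar multipliers so that Fact~\ref{fact:projection}(iii) is invoked with a nonnegative $\lambda$, which is automatic from the hypotheses $\beta\in\,]0,1]$ and $\lambda\in[0,1]$. Note also that the derivation of~\eqref{eq:fixed_point} in Remark~\ref{rem:Fix_T}(i) only requires $\alpha>0$, so it remains valid in the range $\alpha\in\,]0,1]$ used in the statement.
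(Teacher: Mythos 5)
Your proof is correct, but it takes a genuinely different route from the paper's. The paper derives (i) from the constructive half of Theorem~\ref{th:fixT_cone}: since $P_A(0)\in B$ forces $P_{A\cap B}(0)=P_A(0)$, one may take $d_A:=-P_A(0)\in N_A(P_A(0))$ and $d_B:=0$, and the point $x=P_{A\cap B}(0)+2(1-\beta)d_A=(2\beta-1)P_A(0)$ built in that proof is then a fixed point by~\eqref{eq:prop_fixT_cone_aux6}; (ii) is declared analogous, and (iii) is obtained structurally from the convexity of $\Fix T_{\alpha,\beta}$ (Fact~\ref{fact:convex_closed_FixT}), the points $(1-2\lambda(1-\beta))P_A(0)$ being exactly the convex combinations of the two fixed points produced by (i) and (ii) when $P_A(0)=P_B(0)$. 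You instead verify the characterization~\eqref{eq:fixed_point} directly for each candidate point, using only the specialization $P_C((1-\mu)P_C(0))=P_C(0)$ of Fact~\ref{fact:projection}(iii); all three computations check out, including the algebra $2\beta-1+2\lambda(1-\beta)=1-2(1-\lambda)(1-\beta)$ in (iii). Your argument is more self-contained, avoiding both Theorem~\ref{th:fixT_cone} and the convexity of the fixed-point set, and it is transparently valid at the endpoint $\beta=1$, whereas the paper's appeal to Theorem~\ref{th:fixT_cone} (stated for $\beta\in\,]0,1[\,$) requires one to note that the construction there degenerates harmlessly when $\beta=1$; your closing remark that~\eqref{eq:fixed_point} only needs $\alpha>0$ is also the right observation to cover $\alpha=1$. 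What the paper's route buys is conceptual economy: it exhibits (i)--(ii) as instances of the general fixed-point construction and explains (iii) as convexity rather than by a separate computation.
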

\begin{proof}
	(i) This assertion can be deduced from the second part of Theorem~\ref{th:fixT_cone}. Indeed, if $P_A(0)\in B$, then $P_{A\cap B}(0)=P_A(0)$. Since $-P_A(0)\in N_A(P_A(0))$, we may take $d_A:=-P_A(0)$ and $d_B:=0$ and~\eqref{eq:prop_fixT_cone_aux1} holds. Thus, taking $x$ as in~\eqref{eq:x_dA_dB}, we have
	$$x=P_A(0)-2(1-\beta)P_A(0)=(2\beta-1)P_A(0),$$
	which is a fixed point of $T_{\alpha,\beta}$ by~\eqref{eq:prop_fixT_cone_aux6}.
	
	(ii) Analogous to the previous one.	
	
	(iii)	Use (i) and (ii) together with Fact~\ref{fact:convex_closed_FixT}.
\end{proof}

Next, we show some results regarding the range of the operator $I-T_{A,B,\alpha,\beta}$, which will be useful later having in mind Fact~\ref{fact:pazy}.

\begin{lemma}\label{lem:vector_v}
	Let $A,B\subseteq\Hi$ be nonempty, closed and convex sets, and let $\alpha,\beta\in\,]0,1[$. Then
	\begin{itemize}
		\item[(i)] $x-T_{A,B,\alpha,\beta}(x)=2\alpha\beta \left(P_A(x)-P_B(2\beta P_A(x)-x)\right), \quad \forall x\in\Hi$;
		\item[(ii)] $\ran (I-T_{A,B,\alpha,\beta})=\ran (I-T_{A+q,B+(2\beta-1)q,\alpha,\beta})+4\alpha\beta(\beta-1)q,\quad\forall q\in\Hi.$
	\end{itemize}
\end{lemma}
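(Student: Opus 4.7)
My plan is to prove (i) by direct expansion of the definition of $T_{A,B,\alpha,\beta}$, and then to derive (ii) from (i) by combining it with the translation formula for projectors, namely $P_{C+y}(x)=y+P_C(x-y)$ (Fact~\ref{fact:projection}(iv)).

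For (i), I would expand
\begin{equation*}
T_{A,B,\alpha,\beta}(x)=(1-\alpha)x+\alpha(2\beta P_B-I)(2\beta P_A(x)-x)
\end{equation*}
and observe that the outer application of $2\beta P_B-I$ produces the term $-\alpha(2\beta P_A(x)-x)=\alpha x-2\alpha\beta P_A(x)$, which combines with $(1-\alpha)x$ to give $x-2\alpha\beta P_A(x)+2\alpha\beta P_B(2\beta P_A(x)-x)$. Subtracting from $x$ and factoring $2\alpha\beta$ yields (i). This is a one-line calculation.

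For (ii), the strategy is to compute $(I-T_{A+q,B+(2\beta-1)q,\alpha,\beta})(x)$ using (i) applied to the translated sets $A+q$ and $B+(2\beta-1)q$, and then simplify using Fact~\ref{fact:projection}(iv). The key observation, which I expect to be the main technical point, is that the two translations $q$ and $(2\beta-1)q$ are chosen precisely so that the inner argument ``collapses'' to a translate of the original: after writing $P_{A+q}(x)=q+P_A(x-q)$, one gets
\begin{equation*}
2\beta P_{A+q}(x)-x-(2\beta-1)q=2\beta P_A(x-q)-(x-q),
\end{equation*}
so that $P_{B+(2\beta-1)q}$ applied to this point equals $(2\beta-1)q+P_B(2\beta P_A(x-q)-(x-q))$. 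Plugging these in and combining terms produces
\begin{equation*}
x-T_{A+q,B+(2\beta-1)q,\alpha,\beta}(x)=\bigl[(x-q)-T_{A,B,\alpha,\beta}(x-q)\bigr]+4\alpha\beta(1-\beta)q.
\end{equation*}

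Finally, as $x$ ranges over $\Hi$ so does $x-q$, hence the set of left-hand sides is $\ran(I-T_{A+q,B+(2\beta-1)q,\alpha,\beta})$ and the bracketed set on the right is $\ran(I-T_{A,B,\alpha,\beta})$. Rearranging the resulting identity of sets gives (ii). The only subtlety is bookkeeping of the sign of the constant $4\alpha\beta(\beta-1)q$ and correctly identifying both translations; no deep ingredients are needed beyond (i) and Fact~\ref{fact:projection}(iv).
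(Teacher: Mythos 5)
Your proposal is correct and follows essentially the same route as the paper: part (i) by direct expansion, and part (ii) by combining (i) with the translation formula of Fact~\ref{fact:projection}(iv) to obtain a pointwise identity between $I-T_{A,B,\alpha,\beta}$ and $I-T_{A+q,B+(2\beta-1)q,\alpha,\beta}$ (evaluated at shifted arguments) plus the constant $4\alpha\beta(\beta-1)q$, then passing to ranges. The only cosmetic difference is that you translate the argument by $-q$ starting from the shifted operator while the paper translates by $+q$ starting from the original one; the sign bookkeeping in your final rearrangement checks out.
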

\begin{proof} Assertion (i) is straightforward from the definition of $T_{A,B,\alpha,\beta}$: for any $x\in\Hi$, we have
	\begin{align*}
	x-T_{A,B,\alpha,\beta}(x) & =  x-(1-\alpha)x-\alpha(2\beta P_B-I)(2\beta P_A - I)(x)\\
	& = \alpha\left( x - (2\beta P_B(2\beta P_A(x) - x)+2\beta P_A(x)-x \right)\\
	& = 2\alpha\beta\left( P_A(x)-P_B(2\beta P_A(x)-x) \right).
	\end{align*}
	
	To prove (ii), pick any $x,q\in\Hi$. By using the translation formula for projections given in Fact~\ref{fact:projection}(iv), we obtain
	\begin{align*}
	P_A(x)&-P_B(2\beta P_A(x)-x)	 \\
	&= P_{A+q}(x+q)-q-P_B(2\beta P_{A+q}(x+q)-2\beta q-x)\\
	& = P_{A+q}(x+q)-q-P_{B+(2\beta-1)q}(2\beta P_{A+q}(x+q)-x-q)+(2\beta-1)q\\
	& = P_{A+q}(x+q)-P_{B+(2\beta-1)q}\left(2\beta P_{A+q}(x+q)-(x+q)\right)+2(\beta-1)q.	
	\end{align*}
	Therefore, by assertion (i), we get
	$$
	\left(I-T_{A,B,\alpha,\beta}\right)(x)= \left(I-T_{A+q,B+(2\beta-1)q,\alpha,\beta}\right)(x+q)+4\alpha\beta(\beta-1)q, \quad \forall x\in\Hi,
	$$
	and we are done.
\end{proof}

\begin{theorem}\label{th:vector_v}
	Let $A,B\subseteq\Hi$ be nonempty, closed and convex sets, and let $\alpha,\beta\in\,]0,1[$. Suppose that one of the following holds:
	\begin{itemize}
		\item[(i)] $\Hi$ is finite-dimensional.
		\item[(ii)] $\inte A\neq\emptyset$ or $\inte B\neq\emptyset$;
	\end{itemize}
	Then the unique element of minimum norm in $\overline{\ran}\left(I-T_{\alpha,\beta}\right)$ is $2\alpha\beta v$, where $v=P_{\overline{A-B}}(0)$.
\end{theorem}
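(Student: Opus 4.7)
The plan is to sandwich the (unique, by Fact~\ref{fact:pazy}(i)) minimum-norm element $w^\star$ of $\overline{\ran}(I-T_{\alpha,\beta})$ between an easy upper inclusion coming from Lemma~\ref{lem:vector_v}(i) and a matching element produced by a shift-and-solve argument based on Lemma~\ref{lem:vector_v}(ii) and Theorem~\ref{th:fixT_cone}.

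First, by Lemma~\ref{lem:vector_v}(i), every element of $\ran(I-T_{\alpha,\beta})$ has the form $2\alpha\beta(P_A(x)-P_B(2\beta P_A(x)-x))$ and so lies in $2\alpha\beta(A-B)$. Taking closures yields $\overline{\ran}(I-T_{\alpha,\beta})\subseteq 2\alpha\beta\overline{A-B}$, which immediately gives $\|w^\star\|\ge 2\alpha\beta\|v\|$. By uniqueness of the minimum-norm element it therefore suffices to exhibit $2\alpha\beta v$ inside $\overline{\ran}(I-T_{\alpha,\beta})$, and the equality $w^\star=2\alpha\beta v$ will follow from minimality.

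The central construction is the following. Given any $(a,b)\in A\times B$, set $q:=(b-a)/(2(1-\beta))$ and let $A':=A+q$, $B':=B+(2\beta-1)q$. A direct check shows $a+q=b+(2\beta-1)q\in A'\cap B'$ and $4\alpha\beta(\beta-1)q=2\alpha\beta(a-b)$. If in addition $\{A',B'\}$ enjoys the strong CHIP, then Fact~\ref{fact:projection_normalcone} gives $-P_{A'\cap B'}(0)\in N_{A'\cap B'}(P_{A'\cap B'}(0))=(N_{A'}+N_{B'})(P_{A'\cap B'}(0))$, so Theorem~\ref{th:fixT_cone} provides a fixed point of $T_{A',B',\alpha,\beta}$ and hence $0\in\ran(I-T_{A',B',\alpha,\beta})$; pushing this through Lemma~\ref{lem:vector_v}(ii) produces $2\alpha\beta(a-b)\in\ran(I-T_{A,B,\alpha,\beta})$. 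Under hypothesis (ii), the strong CHIP for $\{A',B'\}$ is delivered by Fact~\ref{fact:normal_cone_intersection_formula}(i) whenever $a\in\inte A$, since then $a+q\in\inte A'\cap B'$, and symmetrically if $\inte B\neq\emptyset$; under hypothesis (i) it is delivered by Fact~\ref{fact:normal_cone_intersection_formula}(ii) whenever $a\in\rint A$ and $b\in\rint B$, because $a+q$ then belongs to $\rint A'\cap\rint B'$.

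Finally, I would close by approximation. Since $v\in\overline{A-B}$, under (ii) the density of $\inte A$ in $A$ (resp.\ of $\inte B$ in $B$) yields $(a_n,b_n)\in\inte A\times B$ (resp.\ $A\times\inte B$) with $a_n-b_n\to v$; under (i) the finite-dimensional identity $\rint(A-B)=\rint A-\rint B$ together with the density of $\rint(A-B)$ in $\overline{A-B}$ furnishes $(a_n,b_n)\in\rint A\times\rint B$ with the same property. The previous step then produces $2\alpha\beta(a_n-b_n)\in\ran(I-T_{\alpha,\beta})$, and passing to the limit gives $2\alpha\beta v\in\overline{\ran}(I-T_{\alpha,\beta})$, completing the proof. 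The main obstacle is precisely this density argument: the translation trick only yields range elements corresponding to pairs $(a,b)$ whose translates satisfy the strong CHIP, so one must verify that such restricted pairs already produce a dense subset of differences in $\overline{A-B}$, and this is exactly what hypotheses (i) and (ii) are designed to guarantee.
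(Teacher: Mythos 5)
Your proof is correct and follows essentially the same route as the paper: the inclusion $\overline{\ran}(I-T_{\alpha,\beta})\subseteq 2\alpha\beta\,\overline{A-B}$ from Lemma~\ref{lem:vector_v}(i), the translation $q_{a,b}=(b-a)/(2(1-\beta))$ combined with Fact~\ref{fact:normal_cone_intersection_formula} and Theorem~\ref{th:fixT_cone} to place $2\alpha\beta(a-b)$ in the range, and a density argument over $\rint A\times\rint B$ (resp. $\inte A\times B$). The only cosmetic difference is the final bookkeeping: the paper derives $\|w\|\le 2\alpha\beta\|a-b\|$ for all $a\in A$, $b\in B$ and identifies $(2\alpha\beta)^{-1}w$ as the minimum-norm element of $\overline{A-B}$, whereas you exhibit $2\alpha\beta v$ directly in $\overline{\ran}(I-T_{\alpha,\beta})$ and invoke uniqueness of the minimizer; both are equivalent.
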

\begin{proof}
	Let $w$ be the unique element of minimum norm in $\overline{\ran}(I-T_{\alpha,\beta})$. By Lemma~\ref{lem:vector_v}(i), we have ${\ran}(I-T_{\alpha,\beta})\subseteq 2\alpha\beta(A-B)$, which implies $w\in (2\alpha\beta)\overline{A-B}$.
	
	Suppose that (i) holds. Pick any $a\in\rint A$, $b\in\rint B$ and set $q_{a,b}:=\frac{a-b}{2(\beta-1)}$.
	Then, by Lemma~\ref{lem:vector_v}(ii), we have
	\begin{equation}\label{eq:range}
	\ran \left(I-T_{A,B,\alpha,\beta}\right)=\ran\left(I-T_{A+q_{a,b},B+(2\beta-1)q_{a,b},\alpha,\beta}\right)+2\alpha\beta(a-b),
	\end{equation}
	with
	\begin{align*}
	 b+(2\beta-1)q_{a,b}&=b+\frac{2\beta-1}{2(\beta-1)}(a-b)=b+\left(1+\frac{1}{2(\beta-1)} \right)(a-b)\\
	&=a+\frac{a-b}{2(\beta-1)}=a+q_{a,b};
	\end{align*}
	i.e., we have $a+q_{a,b}=b+(2\beta-1)q_{a,b}\in \rint(A+q_{a,b})\cap \rint(B+(2\beta-1)q_{a,b})$.
	Hence, according to Fact~\ref{fact:normal_cone_intersection_formula}(ii) and Theorem~\ref{th:fixT_cone}, the mapping $T_{A+q_{a,b},B+(2\beta-1)q_{a,b},\alpha,\beta}$ has a fixed point and therefore the unique element of minimum norm in $\ran \big(I-T_{A+q_{a,b},B+(2\beta-1)q_{a,b},\alpha,\beta}\big)$ is $0$. Hence, by~\eqref{eq:range}, we deduce
	\begin{equation}\label{eq:minimum_norm}
	\begin{aligned}
	\|w\| &=\inf\left\{\|z\|:z\in\overline{\ran} \left(I-T_{A+q_{a,b},B+(2\beta-1)q_{a,b},\alpha,\beta}\right)+2\alpha\beta(a-b)\right\}\\
	&\leq 2\alpha\beta\|a-b\|+\inf\left\{\|z\|:z\in\overline{\ran} \left(I-T_{A+q_{a,b},B+(2\beta-1)q_{a,b},\alpha,\beta}\right)\right\}\\
	&=2\alpha\beta\|a-b\|,
	\end{aligned}
	\end{equation}
	and this holds for every $a\in\rint A$ and every $b\in\rint B$.
	
	Now, choose any $a\in A, b\in B$. Then, there exist two sequences $\{a_n\}\subset \rint A, \{b_n\}\subset \rint B$ such that $a_n\rightarrow a$ and $b_n\rightarrow b$, and by~\eqref{eq:minimum_norm}, we get
	$$
	2\alpha\beta\|a-b\|=2\alpha\beta\left\|\lim_{n\rightarrow\infty} (a_n-b_n)\right\|=2\alpha\beta\lim_{n\rightarrow\infty}\| a_n-b_n\|\geq \|w\|.
	$$
	Thus, since $ (2\alpha\beta)^{-1}w\in\overline{A-B}$ and $\|(2\alpha\beta)^{-1}w\| \leq \|a-b\|,$ for all $a\in A$ and $b\in B$, it must be that $(2\alpha\beta)^{-1}w= P_{\overline{A-B}}(0)$, which proves (i).
	
	To prove the result when (ii) holds, if $\inte A\neq\emptyset$ ($\inte B\neq\emptyset$), then take any $a\in \inte A$ and $b\in B$ ($a\in A$ and $b\in\inte B$) and repeat the proof of the previous case using Fact~\ref{fact:normal_cone_intersection_formula}(i) instead of Fact~\ref{fact:normal_cone_intersection_formula}(ii).
\end{proof}

We conclude this section by presenting some translation formulas for the AAMR operators in the special case when both sets are closed affine subspaces.

\begin{proposition}\label{prop:affine_const}
	Let $A,B\subseteq\Hi$ be closed affine subspaces with nonempty intersection. Let $y\in A\cap B$ and let $\alpha,\beta\in\,]0,1[$. Then, for any $x\in\Hi$,
	\begin{equation}\label{eq:prop_affine_traslation}
	T_{A,B,\alpha,\beta}(x)=T_{A-y,B-y,\alpha,\beta}(x)+T_{A,B,\alpha,\beta}(0),
	\end{equation}
	and
	\begin{equation}\label{eq:prop_affine_traslation_FixT}
	T_{A,B,\alpha,\beta}(x)=T_{A-y,B-y,\alpha,\beta}(x-z^*)+z^*, \quad \forall z^*\in\Fix T_{A,B,\alpha,\beta}.
	\end{equation}
	Furthermore, one has
	\begin{equation}\label{eq:prop_affine_traslation_FixT2}
	\Fix T_{A,B,\alpha,\beta}=z^*+\Fix T_{A-y,B-y,\alpha,\beta}, \quad \forall z^*\in\Fix T_{A,B,\alpha,\beta}.
	\end{equation}
\end{proposition}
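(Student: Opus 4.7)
The plan is to exploit the fact that translating affine subspaces by a common element of their intersection turns them into linear subspaces, on which projectors are linear (Fact~\ref{fact:projection_firmly_nonexapnsive}). Setting $U:=A-y$ and $V:=B-y$, these are closed linear subspaces of $\Hi$, and the AAMR operator $T_{U,V,\alpha,\beta}$ is a \emph{linear} operator on $\Hi$ since $P_U$ and $P_V$ are both linear.

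First I would establish~\eqref{eq:prop_affine_traslation}. Using the translation formula $P_A(x)=y+P_U(x-y)$ from Fact~\ref{fact:projection}(iv), and the linearity of $P_U$, one gets $P_A(x)=P_U(x)+(y-P_U(y))$, so
$$(2\beta P_A-I)(x)=(2\beta P_U-I)(x)+2\beta(y-P_U(y)),$$
and analogously for $B$ with $V$ in place of $U$. Composing and using the linearity of $(2\beta P_V-I)$ to distribute across the sum, the cross term carries the input through the linear part and contributes a constant, yielding
$$(2\beta P_B-I)(2\beta P_A-I)(x)=(2\beta P_V-I)(2\beta P_U-I)(x)+C$$
for a constant $C\in\Hi$ independent of $x$. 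Multiplying by $\alpha$, adding $(1-\alpha)x$, and abbreviating $T:=T_{A,B,\alpha,\beta}$ and $T_0:=T_{U,V,\alpha,\beta}$, this gives $T(x)=T_0(x)+\alpha C$ for every $x\in\Hi$. Evaluating at $x=0$ and using $T_0(0)=0$ (because $T_0$ is linear) identifies $\alpha C=T(0)$, which is exactly~\eqref{eq:prop_affine_traslation}.

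Given~\eqref{eq:prop_affine_traslation}, formula~\eqref{eq:prop_affine_traslation_FixT} follows from a two-line manipulation: for $z^*\in\Fix T$, applying~\eqref{eq:prop_affine_traslation} at $x=z^*$ yields $T_0(z^*)=z^*-T(0)$, and then for arbitrary $x$, by linearity of $T_0$,
$$T_0(x-z^*)+z^*=T_0(x)-T_0(z^*)+z^*=T_0(x)+T(0)=T(x).$$
Finally, \eqref{eq:prop_affine_traslation_FixT2} is a direct consequence of~\eqref{eq:prop_affine_traslation_FixT}: for any $z\in\Hi$, the equation $T(z)=z$ becomes $T_0(z-z^*)+z^*=z$, i.e.\ $T_0(z-z^*)=z-z^*$, which means $z-z^*\in\Fix T_0$; reversing the chain of equivalences gives $\Fix T=z^*+\Fix T_0$.

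There is no substantial obstacle here; the only subtle point is keeping track of which operators are linear (those involving $P_U$ and $P_V$) versus merely affine (those involving $P_A$ and $P_B$), so that one may legitimately pull the inner translation through $(2\beta P_V-I)$ in the composition step. Everything else is bookkeeping with Fact~\ref{fact:projection}(iv) and Fact~\ref{fact:projection_firmly_nonexapnsive}.
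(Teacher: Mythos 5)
Your proposal is correct and follows essentially the same route as the paper: both arguments reduce to linear subspaces $A-y$, $B-y$, show each modified reflector onto the affine set equals the one onto the translated linear subspace plus a constant, use linearity of the second modified reflector to pull the constant through the composition, and identify the constant by evaluating at the origin; the derivations of~\eqref{eq:prop_affine_traslation_FixT} and~\eqref{eq:prop_affine_traslation_FixT2} are then the same substitution and equivalence-chasing in both cases. The only cosmetic difference is that you name the constant $2\beta(y-P_U(y))$ and identify $\alpha C=T(0)$ at the end, while the paper writes it directly as $Q_{\beta,A}(0)=2\beta P_A(0)$, which is the same quantity.
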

\begin{proof}
	Because $A-y$ and $B-y$ are closed linear subspaces of $\Hi$, then $P_{A-y}$ and $P_{B-y}$ are linear mappings (see Fact~\ref{fact:projection_firmly_nonexapnsive}). Denote the \emph{modified reflector operator} onto any set $C\subset\Hi$ by $Q_{\beta,C}:=2\beta P_C-I$. Then, the mappings $Q_{\beta,A-y}$ and $Q_{\beta,B-y}$ are also linear. Further, for any $z\in\Hi$, by Fact~\ref{fact:projection}(iv), we have
	\begin{equation*}\label{eq:prop_affine_traslation_aux1}
	\begin{aligned}
	Q_{\beta,A}(z) & =2\beta P_A(z)-z = 2\beta P_{A-y}(z-y)+2\beta y-z \\
	&= 2\beta P_{A-y}(z)-z+2\beta (P_{A-y}(-y)+y)\\
	&= Q_{\beta,A-y}(z)+2\beta P_{A}(0)
	=Q_{\beta,A-y}(z)+Q_{\beta,A}(0).
	\end{aligned}
	\end{equation*}
	Similarly, we get $Q_{\beta,B}(z)=Q_{\beta,B-y}(z)+Q_{\beta,B}(0).$
	Combining these equalities together and
	using the linearity of $Q_{\beta,B-y}$, we obtain
	\begin{equation*}
	\begin{aligned}
	Q_{\beta,B}Q_{\beta,A}(x) & =Q_{\beta,B}\left( Q_{\beta,A-y}(x)+Q_{\beta,A}(0) \right)\\
	&=Q_{\beta,B-y} Q_{\beta,A-y}(x)+Q_{\beta,B-y} Q_{\beta,A}(0)+Q_{\beta,B}(0)\\
	&=Q_{\beta,B-y} Q_{\beta,A-y}(x)+Q_{\beta,B} Q_{\beta,A}(0),
	\end{aligned}
	\end{equation*}
	which implies~\eqref{eq:prop_affine_traslation}.
	
	Now take $z^*\in\Fix T_{A,B,\alpha,\beta}$. Then, by~\eqref{eq:prop_affine_traslation}, we have
	\begin{equation*}
	 z^*=T_{A,B,\alpha,\beta}(z^*)=T_{A-y,B-y,\alpha,\beta}(z^*)+T_{A,B,\alpha,\beta}(0).
	\end{equation*}
	By replacing $T_{A,B,\alpha,\beta}(0)=-T_{A-y,B-y,\alpha,\beta}(z^*)+z^*$ in~\eqref{eq:prop_affine_traslation} and using the linearity of the operator $T_{A-y,B-y,\alpha,\beta}$, we obtain~\eqref{eq:prop_affine_traslation_FixT}.
	
	The last assertion easily follows from~\eqref{eq:prop_affine_traslation_FixT}. Indeed, for any $z^*\in\Fix T_{A,B,\alpha,\beta}$, one has
	\begin{align*}
	w^\star\in \Fix T_{A,B,\alpha,\beta}&\Leftrightarrow T_{A,B,\alpha,\beta}(w^\star)=w^\star\Leftrightarrow T_{A-y,B-y,\alpha,\beta}(w^\star-z^\star)+z^\star=w^\star\\
	&\Leftrightarrow w^\star-z^\star\in\Fix T_{A-y,B-y,\alpha,\beta},
	\end{align*}
	which implies~\eqref{eq:prop_affine_traslation_FixT2}.
\end{proof}

\section{New projection scheme for finding the closest point in the intersection}\label{sec:method}

In the main result of this section we show that the iterative methods defined by the AAMR operators in~\eqref{eq:general_case_sequence} are weakly convergent to a fixed point of the operators, and the \emph{shadow sequences} ${\left(P_A(q+x_n)\right)}_{n=0}^\infty$ are strongly convergent to the solution to problem~\eqref{eq:general_case_problem}.

\begin{theorem}\label{theorem:NPM_convergence}
	Let $A,B\subseteq\Hi$ be nonempty closed and convex sets. Fix any $\alpha,\beta\in\,]0,1[$. Given $q\in\Hi$, choose any  $x_0\in\Hi$ and consider the sequence defined by
	\begin{equation}\label{eq:NPM_sequence_iteration}
	x_{n+1}=T_{A-q,B-q,\alpha,\beta}(x_n), \quad n=0,1,2\ldots
	\end{equation}
	Then, if  $A\cap B\neq\emptyset$ and $q-P_{A\cap B}(q)\in (N_A+N_B)\left(P_{A\cap B}(q)\right)$, the following assertions hold
	\begin{enumerate}
	\item[(i)] the sequence ${(x_n)}_{n=0}^\infty$ is weakly convergent to a point $x^\star\in \Fix T_{A-q,B-q,\alpha,\beta}$ such that
	\begin{equation}\label{eq:x_star}
	P_A(q+x^\star)=P_{A\cap B}(q);
	\end{equation}
	\item[(ii)] the sequence ${(x_{n+1}-x_n)}_{n=0}^\infty$ is strongly convergent to 0;
	\item[(iii)] the sequence ${\left(P_A(q+x_n)\right)}_{n=0}^\infty$ is strongly convergent to $P_{A\cap B}(q)$.
	\end{enumerate}	
	Otherwise, $\|x_n\|\to\infty$.

Furthermore, if both $A$ and $B$ are closed affine subspaces with nonempty intersection and $q-P_{A\cap B}(q)\in(A-A)^\bot+(B-B)^\bot$, then the sequence ${(x_n)}_{n=0}^\infty$ is strongly convergent to $P_{\Fix T_{A-q,B-q,\alpha,\beta}}(x_0)$.
\end{theorem}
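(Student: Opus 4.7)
The plan is to view the scheme~\eqref{eq:NPM_sequence_iteration} as a Krasnosel'ski\u{\i}--Mann iteration of the form covered by Fact~\ref{fact:convergence}, with $T_1:=\beta P_{A-q}$ and $T_2:=\beta P_{B-q}$. Since $\beta\in\,]0,1[$ and the projectors onto closed convex sets are firmly nonexpansive (Fact~\ref{fact:projection_firmly_nonexapnsive}), both $T_1$ and $T_2$ are firmly nonexpansive, and~\eqref{eq:NPM_sequence_iteration} reads $x_{n+1}=(1-\alpha)x_n+\alpha(2T_2-I)(2T_1-I)(x_n)$ with $\lambda_n\equiv\alpha\in\,]0,1[$, so $\sum \lambda_n(1-\lambda_n)=+\infty$. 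By Corollary~\ref{cor:NPM_fixT_cone}, the constraint qualification together with $A\cap B\neq\emptyset$ is exactly $\Fix T_{A-q,B-q,\alpha,\beta}\neq\emptyset$. When the constraint qualification fails, $T_{A-q,B-q,\alpha,\beta}$ is still $\alpha$-averaged (Proposition~\ref{prop:nonexpansiveness_gdr}), so Fact~\ref{fact:pazy}(ii) immediately yields $\|x_n\|\to+\infty$.

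Assuming henceforth that the constraint qualification holds, Fact~\ref{fact:convergence}(a) delivers~(ii) and the weak convergence $x_n\rightharpoonup x^\star$ to some $x^\star\in \Fix T_{A-q,B-q,\alpha,\beta}$. To identify $x^\star$ as in~\eqref{eq:x_star}, apply Proposition~\ref{prop:projected_fixT} to the sets $A-q$ and $B-q$ to obtain $P_{A-q}(x^\star)=P_{(A\cap B)-q}(0)$, and then invoke the translation formula Fact~\ref{fact:projection}(iv) on both sides, yielding $P_A(q+x^\star)-q=P_{A\cap B}(q)-q$, which proves~(i).

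For (iii), I will verify the stronger hypothesis of Fact~\ref{fact:convergence}(b). A one-line computation from the firm nonexpansiveness of $P_{A-q}$ gives
\[
\langle x-y,T_1(x)-T_1(y)\rangle=\beta\langle x-y,P_{A-q}(x)-P_{A-q}(y)\rangle\geq \tfrac{1}{\beta}\|T_1(x)-T_1(y)\|^2,
\]
so $T_1$ is $\tfrac{1}{\beta}$-cocoercive with $\tfrac{1}{\beta}>1$, while $\inf_n \lambda_n=\alpha>0$. Hence Fact~\ref{fact:convergence}(b) ensures strong convergence of $(T_1(x_n))_{n=0}^\infty=(\beta P_{A-q}(x_n))_{n=0}^\infty$ to the unique element of $T_1(\Fix T_{A-q,B-q,\alpha,\beta})$. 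Proposition~\ref{prop:projected_fixT} shows that this singleton equals $\{\beta(P_{A\cap B}(q)-q)\}$, and cancelling the scale $\beta$ and translating once more via Fact~\ref{fact:projection}(iv) gives $P_A(q+x_n)\to P_{A\cap B}(q)$ strongly.

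For the affine statement, note that for closed affine subspaces $A,B$ the normal cones are constant on $A\cap B$, with values $(A-A)^\perp$ and $(B-B)^\perp$ respectively, so the hypothesis $q-P_{A\cap B}(q)\in(A-A)^\bot+(B-B)^\bot$ reduces precisely to the constraint qualification and parts~(i)--(iii) apply. Pick any $y\in A\cap B$ (so $y-q\in(A-q)\cap(B-q)$) and any $z^\star\in\Fix T_{A-q,B-q,\alpha,\beta}$. Proposition~\ref{prop:affine_const} applied to $A-q$ and $B-q$ yields
\[
T_{A-q,B-q,\alpha,\beta}(x)=T'(x-z^\star)+z^\star\quad\text{and}\quad \Fix T_{A-q,B-q,\alpha,\beta}=z^\star+\Fix T',
\]
where $T':=T_{A-y,B-y,\alpha,\beta}$ is a \emph{linear} nonexpansive operator, since $P_{A-y}$ and $P_{B-y}$ are linear on the closed subspaces $A-y$ and $B-y$ (Fact~\ref{fact:projection_firmly_nonexapnsive}). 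Setting $y_n:=x_n-z^\star$ converts~\eqref{eq:NPM_sequence_iteration} into the linear recursion $y_{n+1}=T'(y_n)$, and $y_n-y_{n+1}=x_n-x_{n+1}\to 0$ by~(ii). Fact~\ref{fact:linearmap_strong_convergence} then gives $y_n\to P_{\Fix T'}(x_0-z^\star)$, and translating back by $z^\star$ via Fact~\ref{fact:projection}(iv) and the displayed identity for $\Fix T_{A-q,B-q,\alpha,\beta}$ produces $x_n\to P_{\Fix T_{A-q,B-q,\alpha,\beta}}(x_0)$, as desired. The main subtlety is bookkeeping the two successive translations (by $q$ and then by $z^\star$) so that the subspace reduction legitimately invokes the linear strong-convergence principle.
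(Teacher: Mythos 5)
Your proposal is correct and follows essentially the same route as the paper: the same splitting $T_1=\beta P_{A-q}$, $T_2=\beta P_{B-q}$ fed into Fact~\ref{fact:convergence}, Corollary~\ref{cor:NPM_fixT_cone} plus Fact~\ref{fact:pazy} for the dichotomy, Proposition~\ref{prop:projected_fixT} with the translation formula to identify the limits, and Proposition~\ref{prop:affine_const} with Fact~\ref{fact:linearmap_strong_convergence} for the affine case. The only difference is that you spell out the $\tfrac{1}{\beta}$-cocoercivity verification and the identification of $T_1(\Fix T)$ slightly more explicitly than the paper does, which is fine.
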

\begin{proof}
	By Corollary~\ref{cor:NPM_fixT_cone}, we know that $\Fix T_{A-q,B-q,\alpha,\beta}\neq\emptyset$. The projector operators $P_{A-q}$ and $P_{B-q}$ are firmly nonexpansive by Fact~\ref{fact:projection_firmly_nonexapnsive}. Then, as $\beta\in\,]0,1[$, the operators $T_1:=\beta P_{A-q}$ and $T_2:=\beta P_{B-q}$ are also firmly nonexpansive, and moreover $\frac{1}{\beta}$-cocoercive (with $\frac{1}{\beta}>1$). Observe that $T_{A-q,B-q,\alpha,\beta}=(1-\alpha)I+\alpha(2T_2-I)(2T_1-I)$,
	with $\Fix((2T_2-I)(2T_1-I))=\Fix T_{A-q,B-q,\alpha,\beta}\neq\emptyset$.
	Hence, we can use Fact~\ref{fact:convergence} with $\lambda_n:=\alpha$ to show that the operator $T_{A-q,B-q,\alpha,\beta}$ has a fixed point $x^\star$ such that the sequence defined by~\eqref{eq:NPM_sequence_iteration} satisfies
	$$
	x_n \rightharpoonup  x^\star\in \Fix T_{A-q,B-q,\alpha,\beta},\quad x_{n+1}-x_n\to 0\quad \text{and} \quad \beta P_{A-q}(x_n) \rightarrow \beta P_{A-q}(x^\star).
	$$
	Moreover, by Proposition~\ref{prop:projected_fixT}, we have
	$$P_{A-q}(x^\star)=P_{(A-q)\cap (B-q)}(0),$$
	which, by Fact~\ref{fact:projection}(iv), is equivalent to
	\begin{equation*}
	P_A(q+x^\star)=P_{A\cap B}(q),
	\end{equation*}
	and thus ${P_A(q+x_n)} \rightarrow P_{A\cap B}(q)$. This concludes the proof of statements (i)--(iii).

	The case where $\Fix T_{A-q,B-q,\alpha,\beta}=\emptyset$ easily follows from Corollary~\ref{cor:NPM_fixT_cone} and Fact~\ref{fact:pazy}, since $T_{A-q,B-q,\alpha,\beta}$ is $\alpha$-averaged, according to Proposition~\ref{prop:nonexpansiveness_gdr}.	

Finally, assume that both $A$ and $B$ are closed affine subspaces. By Fact~\ref{fact:subspaces_normalcone}(i), we have
	$$
	N_A(x)=N_{A-x}(0)=(A-x)^\bot=(A-A)^\bot, \quad \text{for all }  x\in A.
	$$
	Hence, $N_A(x)=(A-A)^\bot$ for all $x\in A$, and likewise, $N_B(x)=(B-B)^\bot$ for all $x\in B$. Therefore, we have $q-P_{A\cap B}(q)\in(N_A+N_B)(P_{A\cap B}(q))$, which implies that $\Fix T_{A-q,B-q,\alpha,\beta}\neq\emptyset$. Thus, taking $y\in A\cap B$ and $z^*\in	 \Fix T_{A-q,B-q,\alpha,\beta}$, since $y-q\in(A-q)\cap(B-q)$, we can apply Proposition~\ref{prop:affine_const} recursively to get
	$$
	x_n=T^n_{A-q,B-q,\alpha,\beta}(x_0)=T^n_{A-y,B-y,\alpha,\beta}(x_0-z^*)+z^*.
	$$
By assertion (ii), we know that
$$T_{A-y,B-y,\alpha,\beta}^{n+1}(x_0-z^\star)-T_{A-y,B-y,\alpha,\beta}^{n}(x_0-z^\star)=x_{n+1}-x_n\rightarrow 0.$$
Since $T_{A-y,B-y,\alpha,\beta}$ is linear, by Fact~\ref{fact:linearmap_strong_convergence}, we deduce
	$$
	T^n_{A-y,B-y,\alpha,\beta}(x_0-z^*)\rightarrow P_{\Fix T_{A-y,B-y,\alpha,\beta}}(x_0-z^*).
	$$
	Consequently,
	$$
	x_n=T^n_{A-q,B-q,\alpha,\beta}(x_0)\rightarrow P_{\Fix T_{A-y,B-y,\alpha,\beta}}(x_0-z^*)+z^*=P_{\Fix T_{A-q,B-q,\alpha,\beta}}(x_0),
	$$
	where the last equality holds by Fact~\ref{fact:projection}(iv) and Proposition~\ref{prop:affine_const}.
\end{proof}

\begin{remark}
	If any of the conditions given in Fact~\ref{fact:normal_cone_intersection_formula} or in Fact~\ref{fact:chip_subspaces} hold, then the \emph{constraint qualification} $q-P_{A\cap B}(q)\in (N_A+N_B)\left(P_{A\cap B}(q)\right)$ holds at every point $q\in\Hi$, and the convergence of the sequence defined by~\eqref{eq:NPM_sequence_iteration} is guaranteed.
\end{remark}

\begin{remark} \emph{(i)} Observe that Theorem~\ref{theorem:NPM_convergence}(iii) still holds for $\alpha=1$, since Corollary~\ref{cor:NPM_fixT_cone} remains valid for $\alpha=1$ and $\lambda_n:=1$ satisfies the hypothesis of Fact~\ref{fact:convergence}(b).\\
	\emph{(ii)} It is straightforward to construct a version of the AAMR algorithm where the value of $\alpha$ may vary across the iterations. Specifically, Theorem~\ref{theorem:NPM_convergence}(i)--(iii) still holds if one replaces~\eqref{eq:NPM_sequence_iteration} by the iterative method
$$x_{n+1}=T_{A-q,B-q,\alpha_n,\beta}(x_n)=(1-\alpha_n)x_n+\alpha_n(2\beta P_B-I)(2\beta P_A-I)(x_n), \quad n=0,1,2\ldots$$
where $\{\alpha_n\}_{n\geq 0}\subset\,]0,1[$ satisfies  $\sum_{n=0}^\infty{\alpha_n(1-\alpha_n)}=+\infty$ and $\inf_{n\geq 0}\alpha_n>0$.\\
\emph{(iii)} Similarly, it is easy to include errors in the AAMR scheme as in~\cite{C09}, by using in the proof of Theorem~\ref{theorem:NPM_convergence} a version of Fact~\ref{fact:convergence} with errors (see~\cite[Theorem~2.1]{C09}).
\end{remark}

In~\cite[Theorem~3.2]{DLW97} and~\cite[Theorem~10.13]{D01}, the existence of a point $x^\star$ satisfying~\eqref{eq:x_star} for every $q\in \mathcal{H}$ is proved to be equivalent to the strong CHIP, for the particular case where $B=L^{-1}(b)$, for some bounded linear operator $L$ from $\Hi$ into a finite-dimensional Hilbert space $Y$ and $b\in Y$. In addition, Deutsch an Ward propose in~\cite{DLW97} a steepest descent method with line search for finding the point  $x^\star$, whose linear convergence is proved under the additional assumption that $C$ is polyhedral.

If the sets $A$ and $B$, with nonempty intersection, have the strong CHIP at the point $P_{A\cap B}(q)$, then trivially $q-P_{A\cap B}(q)\in (N_A+N_B)\left(P_{A\cap B}(q)\right)$ and the scheme converges. However, in the following example we show that the strong CHIP is not a necessary condition for the AAMR method to converge for a particular point $q\in\Hi$.

\begin{example}\label{ex:counterex_strong_CHIP} Consider the setting $\Hi=\mathbb{R}^2$, $A:=(1,1)+\mathbb{B}$ and $B:=(-1,1)+\mathbb{B}$. The pair of sets $\{A,B\}$ does not have strong CHIP at any point, since $A\cap B=\{(0,1)\}$ and
	$$
	(N_A+N_B)((0,1))=\mathbb{R}\times\{0\}\neq\mathbb{R}^2=	N_{A\cap B}((0,1)).
	$$
	If we take any $q\in\mathbb{R}\times\{1\}$, then $$q-P_{A\cap B}(q)=q-(0,1)\in\mathbb{R}\times\{0\}=(N_A+N_B)(P_{A\cap B}(q)),$$
	and by Theorem~\ref{theorem:NPM_convergence}, the AAMR method defined by~\eqref{eq:NPM_sequence_iteration} will generate a  sequence that converges to a point $x^\star$ such that $P_A(x^\star+q)=\{(0,1)\}$.
	On the other hand, if $q\not\in\mathbb{R}\times\{1\}$, then $q-P_{A\cap B}(q)\not\in(N_A+N_B)(P_{A\cap B}(q))$, and the sequence ${(x_n)}_{n=0}^\infty$ generated by~\eqref{eq:NPM_sequence_iteration} will not converge, having $\|x_n\|\to\infty$.
\end{example}

We have seen that even when the strong CHIP does not hold, the method can converge for a particular point $q\in\Hi$. However, the following result says that if we want the method to converge for every point in $\Hi$, the strong CHIP will have to be required.

\begin{proposition}\label{prop:strong_CHIP}
	Let $A,B\subseteq\Hi$ be nonempty, closed and convex subsets with nonempty intersection. Then the following are equivalent:
	\begin{itemize}
		\item[(i)] $\{A,B\}$ has the strong CHIP;
		\item[(ii)] for all $q\in\Hi$,
		$$
		q-P_{A\cap B}(q)\in (N_A+N_B)(P_{A\cap B}(q)).
		$$		
	\end{itemize}
\end{proposition}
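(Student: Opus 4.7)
The proof proposal is as follows. The plan is to leverage the characterization of the projection in terms of the normal cone given by Fact~\ref{fact:projection_normalcone}, which essentially translates between statements about $P_{A\cap B}$ and statements about $N_{A\cap B}$. Only the inclusion $N_{A\cap B}(x)\supseteq N_A(x)+N_B(x)$ needs a separate (trivial) observation, since the reverse inclusion in the strong CHIP is precisely the content that requires a constraint qualification.

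For the implication (i)$\Rightarrow$(ii), I would fix any $q\in\Hi$ and set $p:=P_{A\cap B}(q)$. Then $p\in A\cap B$ and Fact~\ref{fact:projection_normalcone} yields $q-p\in N_{A\cap B}(p)$. Invoking the strong CHIP at the point $p$, we get $N_{A\cap B}(p)=N_A(p)+N_B(p)$, so $q-P_{A\cap B}(q)\in (N_A+N_B)(P_{A\cap B}(q))$, which is (ii).

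The converse (ii)$\Rightarrow$(i) is the key direction, but it is also short. Fix any $x\in A\cap B$; we must verify $N_{A\cap B}(x)=N_A(x)+N_B(x)$. The inclusion $\supseteq$ is immediate from the definition of the normal cone: for $d_A\in N_A(x)$, $d_B\in N_B(x)$ and any $y\in A\cap B\subseteq A\cap B$, $\langle d_A+d_B,y-x\rangle\leq 0$. For the nontrivial inclusion $\subseteq$, pick any $d\in N_{A\cap B}(x)$ and set $q:=x+d$. By Fact~\ref{fact:projection_normalcone}, the condition $q-x=d\in N_{A\cap B}(x)$ is equivalent to $P_{A\cap B}(q)=x$. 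Applying the hypothesis (ii) to this particular $q$ gives
\[
d=q-x=q-P_{A\cap B}(q)\in (N_A+N_B)(P_{A\cap B}(q))=N_A(x)+N_B(x),
\]
as required.

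I do not anticipate any substantial obstacle here: the entire argument is a direct manipulation of Fact~\ref{fact:projection_normalcone}, and the ``trick'' $q:=x+d$ makes the point $x$ the projection of $q$ onto $A\cap B$, which is exactly what is needed to recover the normal cone decomposition from the pointwise statement (ii). No compactness, interiority, or closedness assumptions beyond those already in the hypotheses are used.
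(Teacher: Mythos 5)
Your proposal is correct and follows essentially the same route as the paper: the paper also proves (ii)$\Rightarrow$(i) by taking $y\in N_{A\cap B}(x)$, observing via Fact~\ref{fact:projection_normalcone} that $P_{A\cap B}(x+y)=x$, and applying (ii) at $q:=x+y$, while dispatching (i)$\Rightarrow$(ii) directly from the same fact. Your explicit verification of the trivial inclusion $N_A(x)+N_B(x)\subseteq N_{A\cap B}(x)$ is a detail the paper merely asserts, but the argument is identical in substance.
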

\begin{proof}
	Assume~(ii). Take $x\in A\cap B$ and let $y\in N_{A\cap B}(x)$. By Fact~\ref{fact:projection_normalcone}, $P_{A\cap B}(x+y)=x$. Hence, by~(ii), we have
	$$
	y=x+y-P_{A\cap B}(x+y)\in(N_A+N_B)(P_{A\cap B}(x+y)).
	$$
	Therefore $N_{A\cap B}(x)\subseteq (N_A+N_B)(x)$. Since $x$ is arbitrary in $A\cap B$ and the reverse inclusion always holds, then $\{A,B\}$ has the strong CHIP, which proves that (ii)$\Rightarrow$(i). The opposite implication clearly holds by Fact~\ref{fact:projection_normalcone}.
\end{proof}

We finish this section with the following consequence of Theorem~\ref{th:vector_v}, which holds even when $A\cap B=\emptyset$.
\begin{corollary}
	Let $A,B\subseteq\Hi$ nonempty closed and convex, let $\alpha,\beta\in\,]0,1[$ and let $x_0\in\Hi$. For any $q\in\Hi$, consider the iterated sequence defined by
	\begin{equation*}
	x_{n+1}=T_{A-q,B-q,\alpha,\beta}(x_n),\quad n=0,1,2,\dots
	\end{equation*}
	Suppose that one of the following holds:
	\begin{itemize}
		\item[(i)] $\Hi$ is finite-dimensonal;
		\item[(ii)] $\inte A\neq\emptyset$ or $\inte B\neq\emptyset$.
	\end{itemize}
	Then, the sequence $(x_n-x_{n+1})_{n=0}^\infty$ converges in norm to $2\alpha\beta P_{\overline{A-B}}(0)$.	
\end{corollary}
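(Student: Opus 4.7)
The plan is to obtain the result as a direct combination of Fact~\ref{fact:pazy}(i) and Theorem~\ref{th:vector_v}, once the hypotheses on $A$ and $B$ are transferred to the translated sets $A-q$ and $B-q$ that actually appear in the iteration.

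First, I would observe that $T_{A-q,B-q,\alpha,\beta}$ is $\alpha$-averaged by Proposition~\ref{prop:nonexpansiveness_gdr}. Applying Fact~\ref{fact:pazy}(i) to this operator with starting point $x_0$, one obtains that
\begin{equation*}
x_n-x_{n+1}=T_{A-q,B-q,\alpha,\beta}^n(x_0)-T_{A-q,B-q,\alpha,\beta}^{n+1}(x_0)
\end{equation*}
converges in norm to the unique element of minimum norm in $\overline{\ran}\bigl(I-T_{A-q,B-q,\alpha,\beta}\bigr)$.

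Second, I would identify this minimum-norm element via Theorem~\ref{th:vector_v} applied to the pair $(A-q,B-q)$. For this I need to check that one of the alternative hypotheses (i), (ii) of that theorem holds for $A-q$ and $B-q$. If $\Hi$ is finite-dimensional there is nothing to do, and otherwise $\inte(A-q)=\inte(A)-q$ and $\inte(B-q)=\inte(B)-q$, so $\inte A\neq\emptyset$ (resp.\ $\inte B\neq\emptyset$) is equivalent to $\inte(A-q)\neq\emptyset$ (resp.\ $\inte(B-q)\neq\emptyset$). Hence Theorem~\ref{th:vector_v} applies and gives that the minimum-norm element is $2\alpha\beta\, P_{\overline{(A-q)-(B-q)}}(0)$.

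Finally, I would conclude by noting that $(A-q)-(B-q)=A-B$, so the limit is exactly $2\alpha\beta\, P_{\overline{A-B}}(0)$. There is no genuine obstacle here: the only point worth emphasizing is that, unlike Theorem~\ref{theorem:NPM_convergence}, this statement does not require $A\cap B\neq\emptyset$ nor the constraint qualification, because Fact~\ref{fact:pazy}(i) delivers strong convergence of the successive differences for \emph{any} averaged operator, whether or not it has fixed points.
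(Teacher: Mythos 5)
Your proposal is correct and follows exactly the paper's own argument: the paper's proof is the one-line ``Use Fact~\ref{fact:pazy} together with Theorem~\ref{th:vector_v}'', and you have simply made explicit the (routine) transfer of the hypotheses to the translated sets $A-q$, $B-q$ and the identification $(A-q)-(B-q)=A-B$. Nothing is missing.
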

\begin{proof}
	Use Fact~\ref{fact:pazy} together with Theorem~\ref{th:vector_v}.	
\end{proof}

\section{Finitely many sets}\label{sec:several_sets}

In this section we show how to apply the AAMR method to the case of finitely many sets. Let $C_1,\ldots,C_r\subset\Hi$ be nonempty, closed and convex subsets of $\Hi$. Given any $q\in\Hi$, we are interested in solving  the problem
\begin{equation}\label{eq:FMS_general_case_problem}
\text{Find } p=P_{\bigcap_{i=1}^r C_i}(q).
\end{equation}

To solve problem~\eqref{eq:FMS_general_case_problem} with the AAMR method, we use the following well-known Pierra's \emph{product-space reformulation}~\cite{Pierra}. Consider the product space $\Hi^r$ and define the sets
\begin{equation*}
C:=\prod_{i=1}^{r}C_i, \quad D:=\{(x,x,\ldots,x)\in\Hi^r:x\in\Hi\}.
\end{equation*}
While the set $D$, sometimes called the \emph{diagonal}, is always a closed subspace, the properties of $C$ are largely inherited. For instance, $C$ is nonempty, closed and convex.
Since
\begin{equation*}
p\in\bigcap_{i=1}^r C_i \Leftrightarrow (p,p,\ldots,p)\in C\cap D,
\end{equation*}
by Fact~\ref{fact:projection}(ii), we have the following equivalent reformulation of problem~\eqref{eq:FMS_general_case_problem}:
$$\text{Find $p$ such that } (p,p,\ldots,p)=P_{C\cap D}(q,q,\ldots,q).$$
Moreover, knowing the projections onto $C_1,\ldots,C_r$, the projections onto $C$ and $D$ can be easily computed.
Indeed, for any $\mathbf{x}=(x_1,\ldots,x_r)\in\Hi^r$, we have
\begin{equation*}
P_C(\mathbf{x})=\left( P_{C_1}(x_1),\ldots,P_{C_r}(x_r)\right),
\end{equation*}
and,
\begin{equation*}
P_D(\mathbf{x})=\left( \frac{1}{r}\sum_{i=1}^r x_i, \ldots, \frac{1}{r}\sum_{i=1}^r x_i \right),
\end{equation*}
see~\cite[Lemma~1.1]{Pierra}. For further details see, for example,~\cite[Section~3]{ABTcomb}.

Therefore, the AAMR operators
$$T_{D,C,\alpha,\beta}=(1-\alpha)I+\alpha(2\beta P_C-I)(2\beta P_D-I)$$ can be readily computed whenever $P_{C_1},P_{C_2},\ldots,P_{C_r}$ can be.

To derive our main result regarding the convergence of the AAMR method for finitely many sets, we will use the following characterization to rewrite the constraint qualification in the product space.

\begin{lemma}\label{lem:FMS_normal_cones}
	For every $\mathbf{x}=(x,x,\ldots,x)\in C\cap D$,
	\begin{equation*}
	\left( N_C(\mathbf{x})+N_D(\mathbf{x}) \right)\cap D=\left( \sum_{i=1}^r N_{C_i}(x)\right)^r\cap D.
	\end{equation*}
\end{lemma}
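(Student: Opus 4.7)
The plan is to identify the two normal cones appearing in the left-hand side separately, and then compare diagonal elements. Since $C=\prod_{i=1}^r C_i$ is a product of convex sets and $\mathbf{x}=(x,\ldots,x)\in C$, the normal cone to the product is the product of the normal cones, so
$$N_C(\mathbf{x})=\prod_{i=1}^r N_{C_i}(x).$$
Since $D$ is a closed subspace of $\Hi^r$, Fact~\ref{fact:subspaces_normalcone}(i) gives $N_D(\mathbf{x})=D^\bot$, and a direct computation shows
$$D^\bot=\left\{(u_1,\ldots,u_r)\in\Hi^r\;\middle|\;\sum_{i=1}^r u_i=0\right\},$$
because $(u_1,\ldots,u_r)\perp(y,\ldots,y)$ for every $y\in\Hi$ if and only if $\sum_i u_i=0$.

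Next I would verify the inclusion ``$\subseteq$''. Suppose $(v,v,\ldots,v)\in(N_C(\mathbf{x})+N_D(\mathbf{x}))\cap D$. Then there exist $w_i\in N_{C_i}(x)$ and $(u_1,\ldots,u_r)\in D^\bot$ with $v=w_i+u_i$ for every $i$. Summing the $r$ equalities and using $\sum_i u_i=0$ yields $rv=\sum_{i=1}^r w_i$. Since each $N_{C_i}(x)$ is a cone, $\frac{1}{r}w_i\in N_{C_i}(x)$, and hence $v=\sum_{i=1}^r\frac{1}{r}w_i\in\sum_{i=1}^r N_{C_i}(x)$, which shows $(v,\ldots,v)\in\left(\sum_{i=1}^r N_{C_i}(x)\right)^r\cap D$.

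For the reverse inclusion ``$\supseteq$'', take $(v,\ldots,v)\in\left(\sum_{i=1}^r N_{C_i}(x)\right)^r\cap D$ and write $v=\sum_{i=1}^r w_i$ with $w_i\in N_{C_i}(x)$. Using again that $N_{C_i}(x)$ is a cone, set $w_i':=r w_i\in N_{C_i}(x)$ and $u_i:=v-w_i'$. Then $\sum_{i=1}^r u_i=rv-r\sum_{i=1}^r w_i=0$, so $(u_1,\ldots,u_r)\in D^\bot=N_D(\mathbf{x})$, while $(w_1',\ldots,w_r')\in N_C(\mathbf{x})$, and $(v,\ldots,v)=(w_1',\ldots,w_r')+(u_1,\ldots,u_r)$.

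No real obstacle is anticipated: the only subtle point is exploiting the fact that the normal cones are cones (not merely subspaces), in order to scale by the factor $r$ or $1/r$ that appears when averaging over the $r$ coordinates of the diagonal. Everything else is a direct unfolding of the product and diagonal structures.
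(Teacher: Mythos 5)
Your proof is correct and follows essentially the same route as the paper: identify $N_C(\mathbf{x})$ as the product of the $N_{C_i}(x)$ and $N_D(\mathbf{x})$ as $D^\bot=\{\mathbf{u}:\sum_i u_i=0\}$, then average over coordinates for one inclusion and rescale by $r$ using the cone property for the other. The only difference is notational (the paper writes the reverse-inclusion decomposition as $\mathbf{y}=r\mathbf{d}+\mathbf{u}$ rather than introducing $w_i'=rw_i$), so there is nothing to add.
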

\begin{proof}
	Let $\mathbf{x}=(x,x,\ldots,x)\in C\cap D$.  Since $C$ is the product of $r$ sets and $D$ is a closed subspace of $\Hi^r$, we have
	\begin{align*}
	N_C(\mathbf{x})&=N_{C_1}(x)\times N_{C_2}(x) \times \cdots \times N_{C_r}(x),\\
	N_D(\mathbf{x})&=D^\bot=\left\{ \mathbf{u}\in\Hi^r: \sum_{i=1}^r u_i=0 \right\}.
	\end{align*}
	
	To prove the direct inclusion, pick any $\mathbf{y}=(y,y,\ldots,y)\in \left( N_C(\mathbf{x})+N_D(\mathbf{x}) \right)\cap D$. Then,
	\begin{equation*}
	y\in N_{C_i}(x)+u_i, \quad \text{for } i=1,\ldots,r,
	\end{equation*}
	with $\sum_{i=1}^r u_i=0$. Thus, as $N_{C_i}(x)$ are all cones, we have $y\in\sum_{i=1}^r N_{C_i}(x)$, which yields $\mathbf{y}\in\left( \sum_{i=1}^r N_{C_i}(x)\right)^r$.
	
	To prove the reverse inclusion, pick any $\mathbf{y}=(y,y,\ldots,y)\in \left( \sum_{i=1}^r N_{C_i}(x)\right)^r$. Then, there exists $d_i\in N_{C_i}(x)$, for each $i=1,\ldots, r$, such that $y=\sum_{i=1}^r d_i$. Let $\mathbf{d}:=(d_1,d_2,\ldots,d_r)\in N_C(\mathbf{x})$ and $\mathbf{u}:=(u_1,u_2,\ldots,u_r)$, where $u_i:=y-rd_i$, for each $i=1,\ldots, r$. Since
	$$
	\sum_{i=1}^r u_i=\sum_{i=1}^r(y-rd_i)=ry-r	\sum_{i=1}^n d_i=0,
	$$
	we have $\mathbf{u}\in D^\bot$.
	Hence, $\mathbf{y}=r\mathbf{d}+\mathbf{u}\in N_C(\mathbf{x})+N_D(\mathbf{x})$.
\end{proof}

We are now ready to derive the following theorem of convergence of the AAMR method for the case of finitely many sets.

\begin{theorem}\label{th:FMS}
	Let $C_1,C_2,\ldots,C_r\subseteq\Hi$ be nonempty closed and convex sets. Fix any $\alpha,\beta\in\,]0,1[$. Given $q\in\Hi$, choose any $\mathbf{x}_0\in\Hi^r$  and consider the sequence $(\mathbf{x}_n)_{n=0}^\infty={(x_{n,1},\ldots,x_{n,r})}_{n=0}^\infty$,  defined by
	\begin{equation}\label{eq:FMS_sequence_iteration}
	\mathbf{x}_{n+1}=T_{D,\prod_{i=1}^r (C_i-q),\alpha,\beta}(\mathbf{x}_n),\quad n=0,1,2,\ldots
	\end{equation}
	Then, if $\bigcap_{i=1}^r C_i\neq \emptyset$ and $q-P_{\bigcap_{i=1}^r C_i}(q)\in \sum_{i=1}^r N_{C_i}\left(P_{\bigcap_{i=1}^r C_i}(q)\right)$, the following assertions hold:
	\begin{enumerate}
	\item[(i)] the sequence ${(\mathbf{x}_n)}_{n=0}^\infty$ is weakly convergent to a point $\mathbf{x}^\star=(x_1^\star,\ldots,x_r^\star)\in \Fix T_{D,\prod_{i=1}^r (C_i-q),\alpha,\beta}$ such that
	$$
	P_{\bigcap_{i=1}^r C_i}(q)=q+\frac{1}{r}\sum_{j=1}^r x_j^\star;
	$$
	\item[(ii)] the sequence $\left( q+\frac{1}{r}\sum_{j=1}^r x_{n,j}\right)_{n=0}^{\infty}$ is strongly convergent to $P_{\bigcap_{i=1}^r C_i}(q)$.
	\end{enumerate}
	Otherwise, $\|\mathbf{x}_n\|\to\infty$.

Furthermore, if $C_1,C_2,\ldots,C_r$ are closed affine subspaces with nonempty intersection satisfying $q-P_{\bigcap_{i=1}^r C_i}(q)\in \sum_{i=1}^r (C_i-C_i)^\perp$, then the sequence ${(\mathbf{x}_n)}_{n=0}^\infty$ is strongly convergent to  $P_{\Fix T_{D,\prod_{i=1}^r (C_i-q),\alpha,\beta}}(\mathbf{x}_0)$.
\end{theorem}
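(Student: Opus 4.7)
The plan is to reduce the theorem to Theorem~\ref{theorem:NPM_convergence} applied in the product Hilbert space $\Hi^r$, with $A:=D$ (the diagonal), $B:=C:=\prod_{i=1}^r C_i$, and at the point $\mathbf{q}:=(q,q,\ldots,q)\in D$. Since $\mathbf{q}\in D$, we have $D-\mathbf{q}=D$, while $C-\mathbf{q}=\prod_{i=1}^r(C_i-q)$, so the recursion~\eqref{eq:FMS_sequence_iteration} coincides with the AAMR iteration driven by $T_{D-\mathbf{q},C-\mathbf{q},\alpha,\beta}$. Furthermore, $D\cap C\neq\emptyset$ iff $\bigcap_{i=1}^r C_i\neq\emptyset$, in which case, setting $p:=P_{\bigcap_{i=1}^r C_i}(q)$, we get $P_{D\cap C}(\mathbf{q})=(p,p,\ldots,p)$.

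The key step is to translate the constraint qualification. Observe that $\mathbf{q}-P_{D\cap C}(\mathbf{q})=(q-p,\ldots,q-p)\in D$, so the product-space condition
$$\mathbf{q}-P_{D\cap C}(\mathbf{q})\in(N_D+N_C)(P_{D\cap C}(\mathbf{q}))$$
is equivalent to the same inclusion intersected with $D$, which by Lemma~\ref{lem:FMS_normal_cones} equals $\left(\sum_{i=1}^r N_{C_i}(p)\right)^r\cap D$. A vector in $D$ belongs to this set precisely when its (equal) coordinates lie in $\sum_{i=1}^r N_{C_i}(p)$; hence the product-space constraint qualification is equivalent to the hypothesis $q-p\in\sum_{i=1}^r N_{C_i}(p)$. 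Theorem~\ref{theorem:NPM_convergence}(i)--(iii) then yields (i)--(ii): from $P_D(\mathbf{q}+\mathbf{x}^\star)=P_{D\cap C}(\mathbf{q})=(p,\ldots,p)$ together with the explicit expression $P_D(\mathbf{z})=\left(\frac{1}{r}\sum_j z_j,\ldots,\frac{1}{r}\sum_j z_j\right)$, one immediately reads off $p=q+\frac{1}{r}\sum_{j=1}^r x_j^\star$, and the strong convergence of the shadow sequence in Theorem~\ref{theorem:NPM_convergence}(iii) translates into strong convergence of the averages $q+\frac{1}{r}\sum_j x_{n,j}\to p$. The divergence when the constraint qualification fails follows at once from the corresponding clause of Theorem~\ref{theorem:NPM_convergence}, since $T_{D-\mathbf{q},C-\mathbf{q},\alpha,\beta}$ is $\alpha$-averaged.

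For the affine subspace case, $C$ is a closed affine subspace with $(C-C)^\perp=\prod_{i=1}^r(C_i-C_i)^\perp$, while $D$ is a closed subspace. To invoke the affine conclusion of Theorem~\ref{theorem:NPM_convergence} (with $A=D$, $B=C$), we must verify $\mathbf{q}-P_{D\cap C}(\mathbf{q})\in D^\perp+(C-C)^\perp$. Writing $q-p=\sum_{i=1}^r w_i$ with $w_i\in(C_i-C_i)^\perp$, set $\mathbf{w}:=(rw_1,\ldots,rw_r)\in(C-C)^\perp$. Then
$$P_D(\mathbf{w})=\left(\sum_{i=1}^r w_i,\ldots,\sum_{i=1}^r w_i\right)=(q-p,\ldots,q-p)=\mathbf{q}-P_{D\cap C}(\mathbf{q}),$$
so $\mathbf{q}-P_{D\cap C}(\mathbf{q})=\mathbf{w}-P_{D^\perp}(\mathbf{w})\in(C-C)^\perp+D^\perp$, as required. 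The strong convergence to $P_{\Fix T_{D,\prod_{i=1}^r(C_i-q),\alpha,\beta}}(\mathbf{x}_0)$ then follows directly. The main technical hurdle is this equivalence between the two forms of the constraint qualification, which reduces to Lemma~\ref{lem:FMS_normal_cones} in the general case and to the explicit decomposition of $\mathbf{w}$ above in the affine case.
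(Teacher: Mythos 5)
Your proposal is correct and follows essentially the same route as the paper: pass to the product space with $A=D$, $B=C=\prod_{i=1}^r C_i$ and $\mathbf{q}=(q,\ldots,q)$, note $D-\mathbf{q}=D$ and $C-\mathbf{q}=\prod_{i=1}^r(C_i-q)$, translate the constraint qualification via Lemma~\ref{lem:FMS_normal_cones}, and read off the conclusions from Theorem~\ref{theorem:NPM_convergence} and the explicit formula for $P_D$. Your explicit decomposition $\mathbf{w}=(rw_1,\ldots,rw_r)$ for the affine case is a valid (if slightly redundant) verification of a step the paper leaves implicit, since for affine subspaces $N_{C_i}(p)=(C_i-C_i)^\perp$ and Lemma~\ref{lem:FMS_normal_cones} already gives the needed equivalence.
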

\begin{proof}
	Let $\mathbf{q}:=(q,q,\ldots,q)\in\Hi^r$. Observe that $C-\mathbf{q}=\prod_{i=1}^r (C_i-q)$ and $D-\mathbf{q}=D$, since $D$ is a subspace containing $\mathbf{q}$. Therefore, the operator defining the iteration~\eqref{eq:FMS_sequence_iteration} is simply $T_{D-q,C-q,\alpha,\beta}$. Observe also that  $\bigcap_{i=1}^r C_i\neq\emptyset$ if and only if $C\cap D\neq\emptyset$.
	
	Let $p:=P_{\bigcap_{i=1}^r C_i}(q)$. Then, $\mathbf{p}:=(p,p,\ldots,p)=P_{C\cap D}(\mathbf{q})$. Moreover, by Lemma~\ref{lem:FMS_normal_cones}, we have
	$$
	\mathbf{q}-\mathbf{p}\in N_C(\mathbf{p})+N_D(\mathbf{p}) \Leftrightarrow  q-p\in\sum_{i=1}^r N_{C_i}(p).
	$$
	The result thus follows from Theorem~\ref{theorem:NPM_convergence}.
\end{proof}

\begin{remark} \emph{(i)} The order of action of the projections onto $D$ and $C$ chosen in Theorem~\ref{th:FMS} makes the shadow sequence $P_D(\mathbf{x}_{n}+\mathbf{q})$ to lay in the diagonal. In this way, it can be identified with a sequence in the original space to be monitored; concretely, the sequence $$\left( q+\frac{1}{r}\sum_{j=1}^r x_{n,j}\right)_{n=0}^{\infty}\rightarrow P_{\bigcap_{i=1}^r C_i}(q).$$
	\emph{(ii)} Thanks to Lemma~\ref{lem:FMS_normal_cones}, it is straightforward to prove an analogous result to Proposition~\ref{prop:strong_CHIP}, showing that strong CHIP of $\{C_1,\ldots, C_n\}$ characterizes the weak convergence of the iterative method~\eqref{eq:FMS_sequence_iteration} for every point $q\in\Hi$.
\end{remark}

Let us now show some similarities (and differences) between AAMR and a method introduced by Combettes in~\cite{C09}. In this work, the author proposed a strongly convergent algorithm for computing the resolvent of a finite sum of maximally monotone operators. When these operators are chosen as the normal cones of closed and convex sets and the strong CHIP holds, the resolvent of the sum of the operators is nothing but the projection onto the intersection of the sets. The algorithm introduced in~\cite[Theorem~2.8]{C09} also relies on the product space $\Hi^r$ and is defined by the recurrence
\begin{align}\label{eq:combettes}
\mathbf{z}_{n+1}&=\left(1-\frac{\lambda_n}{2}\right)\mathbf{z}_n+\frac{\lambda_n}{2}R_{{D}}\left( 2P_{{C}}\left(\frac{\mathbf{z}_n+\gamma\mathbf{q}}{\gamma+1}\right)- \mathbf{z}_n\right),\\
\mathbf{x}_{n+1}&=P_DP_C(\mathbf{z}_{n+1}),\label{eq:combettes_bis}
\end{align}
for $\gamma>0$, and $(\lambda_n)_{n\geq 0}\in\,]0,2]$ such that $\inf_{n\geq 0}\lambda_n>0$. Observe that, by the dilatation formula in Fact~\ref{fact:projection}(v) and the linearity of $R_D$ given by Fact~\ref{fact:projection_firmly_nonexapnsive}, we have
\begin{align*}
R_{{D}}\left( 2P_{{C}}\left(\frac{\mathbf{z}_n+\gamma\mathbf{q}}{\gamma+1}\right)- \mathbf{z}_n\right)&=R_{{D}}\left( 2\frac{1}{\gamma+1}\left(P_{{(\gamma+1)C-\gamma\mathbf{q}}}\left(\mathbf{z}_n\right)+\gamma\mathbf{q}\right)- \mathbf{z}_n\right)\\
&= R_{{D}}\left( 2\frac{1}{\gamma+1}P_{{(\gamma+1)C-\gamma\mathbf{q}}}\left(\mathbf{z}_n\right)- \mathbf{z}_n\right)+R_D\left( 2\frac{\gamma}{\gamma+1}\mathbf{q}\right)\\
&=R_{{D}}\left( 2\frac{1}{\gamma+1}P_{{(\gamma+1)C-\gamma\mathbf{q}}}\left(\mathbf{z}_n\right)- \mathbf{z}_n\right)+ 2\frac{\gamma}{\gamma+1}\mathbf{q}.
\end{align*}
Thus, setting $\beta:=\frac{1}{1+\gamma}$ and $\alpha_n:=\frac{\lambda_n}{2}$, the recurrence in \eqref{eq:combettes} can be expressed as
\begin{equation}\label{eq:combettes2}
\mathbf{z}_{n+1}=\left(1-\alpha_n\right)\mathbf{z}_n+\alpha_n R_{{D}}\left( 2\beta P_{{\frac{1}{\beta}C-\frac{1-\beta}{\beta}\mathbf{q}}}\left(\mathbf{z}_n\right)- \mathbf{z}_n\right)+ 2\alpha_n (1-\beta)\mathbf{q};
\end{equation}
or equivalently, in terms of the AAMR operator~\eqref{eq:def_GDR},
\begin{equation}\label{eq:combettes3}
\mathbf{z}_{n+1}=T_{\prod_{i=1}^r \left(\frac{1}{\beta}C_i-\frac{1-\beta}{\beta}{q}\right),D,\alpha_n,\beta}(\mathbf{z}_n)+ 2(1-\beta)\alpha_n P_{{D}}\left( 2\beta P_{{\frac{1}{\beta}C-\frac{1-\beta}{\beta}\mathbf{q}}}\left(\mathbf{z}_n\right)- \mathbf{z}_n +\mathbf{q}\right),
\end{equation}
with $\beta\in\,]0,1[$ and $\alpha_n\in\,]0,1]$. The latter scheme clearly differs from AAMR, even when $\mathbf{q}=\mathbf{0}$.

\section{Numerical Experiments}\label{sec:numerical}

In this section we show the results of four different numerical experiments with the common setting of finding the projection of an arbitrary point onto the intersection of two closed subspaces $U$ and $V$ in the Euclidean space $\Hi=\R^{50}$ such that $U\cap V\neq \{0\}$. We compare the new AAMR method with Combettes' method (CM) given by~\eqref{eq:combettes_bis}--\eqref{eq:combettes2}, the method of alternating projections (MAP), the Douglas--Rachford method (DRM) and Haugazeau's method in its basic form (see, e.g.,~\cite[equation~(18)]{BCL06}), and we test the influence of the parameter $\beta$ in the behavior of the AAMR method to see which value gives better convergence results. We have also tested the HLWB method with parameters $\lambda_n:=1/(n+1)$ (see~\cite{Ba96}), but we only show the results obtained in Figure~\ref{fig:num_exp_dts}, as the method was clearly outperformed by all the other algorithms in our experiments.

The rate of linear convergence of DRM for subspaces is known to be the cosine of the Friedrichs angle~\cite{BCNPW14,BCNPW15} (see Definition~\ref{def:F_angle}), while the rate of convergence of MAP is the squared cosine, see~\cite{A50,De83,DH97}. It was then compulsory to take the Friedrichs angle into consideration in our numerical experiments. In our tests we computed Friedrichs angles from principal angles, see~\cite{BCNPW15} for further information.

Observe that, for DRM and AAMR, the sequences of interest to be monitored are, respectively,
$$
\left(P_U(DR_{U,V}^n(x_0))\right)_{n=0}^\infty\quad \text{and} \quad \left(P_U(T_{U-x_0,V-x_0,\alpha,\beta}^n(x_0)+x_0)\right)_{n=0}^\infty
$$
as these are the sequences that converge to the desired point $P_{U\cap V}(x_0)$, while for MAP, CM and Haugazeau's method, the sequence $(x_i)_{n=0}^{\infty}$ given by the respective algorithm is directly the sequence of interest.
We used a stopping criterion based on the true error; that is, we terminated the algorithms when the current iterate of the monitored sequence $(z_n)_{n=0}^\infty$ satisfies
$$
d_{U\cap V}(z_n)<\varepsilon
$$
for the first time (in real situations this information is not usually available). As in the numerical experiments in~\cite{BCNPW14}, the tolerance was set to $\varepsilon:=10^{-3}$.

The purpose of our first experiment was to find out which value of $\alpha$ is optimal for AAMR when it is applied to subspaces. To this aim, we randomly generated $1000$ pairs of subspaces and run AAMR with a random starting point for each value of $\alpha\in\{0.01,0.02,\ldots,0.99,1\}$ and $\beta\in\{0.6,0.7,0.8,0.9\}$. In Figure~\ref{fig:best_alpha} we have plotted the best value $\alpha$ against the Friedrichs angle, that is, the value of $\alpha$ for which AAMR was faster. For a fair comparison in the subsequent tests, we performed the same experiment with CM.
\begin{figure}[ht!]
    \centering
	\includegraphics[width=.49\linewidth]{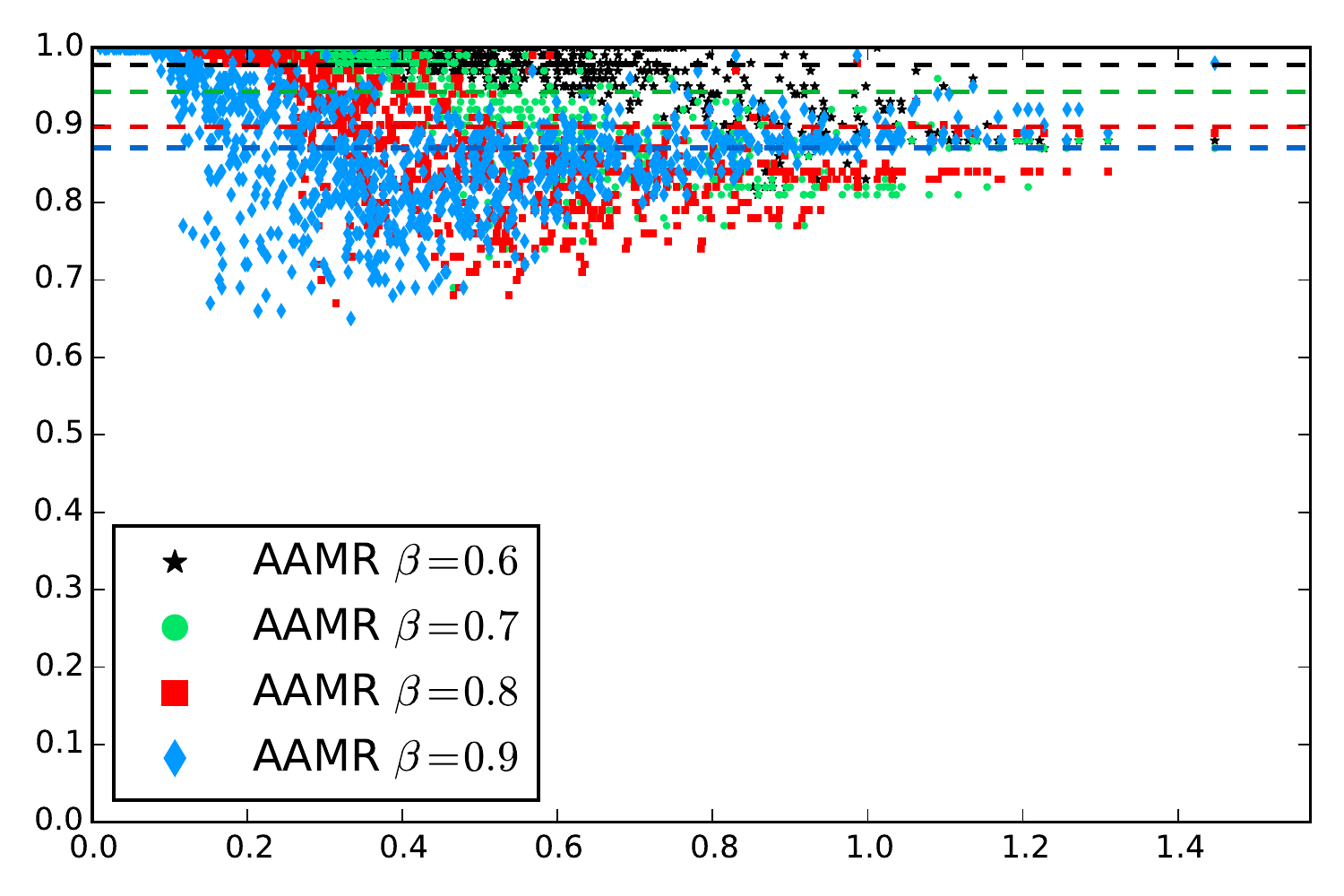}
	\includegraphics[width=.49\linewidth]{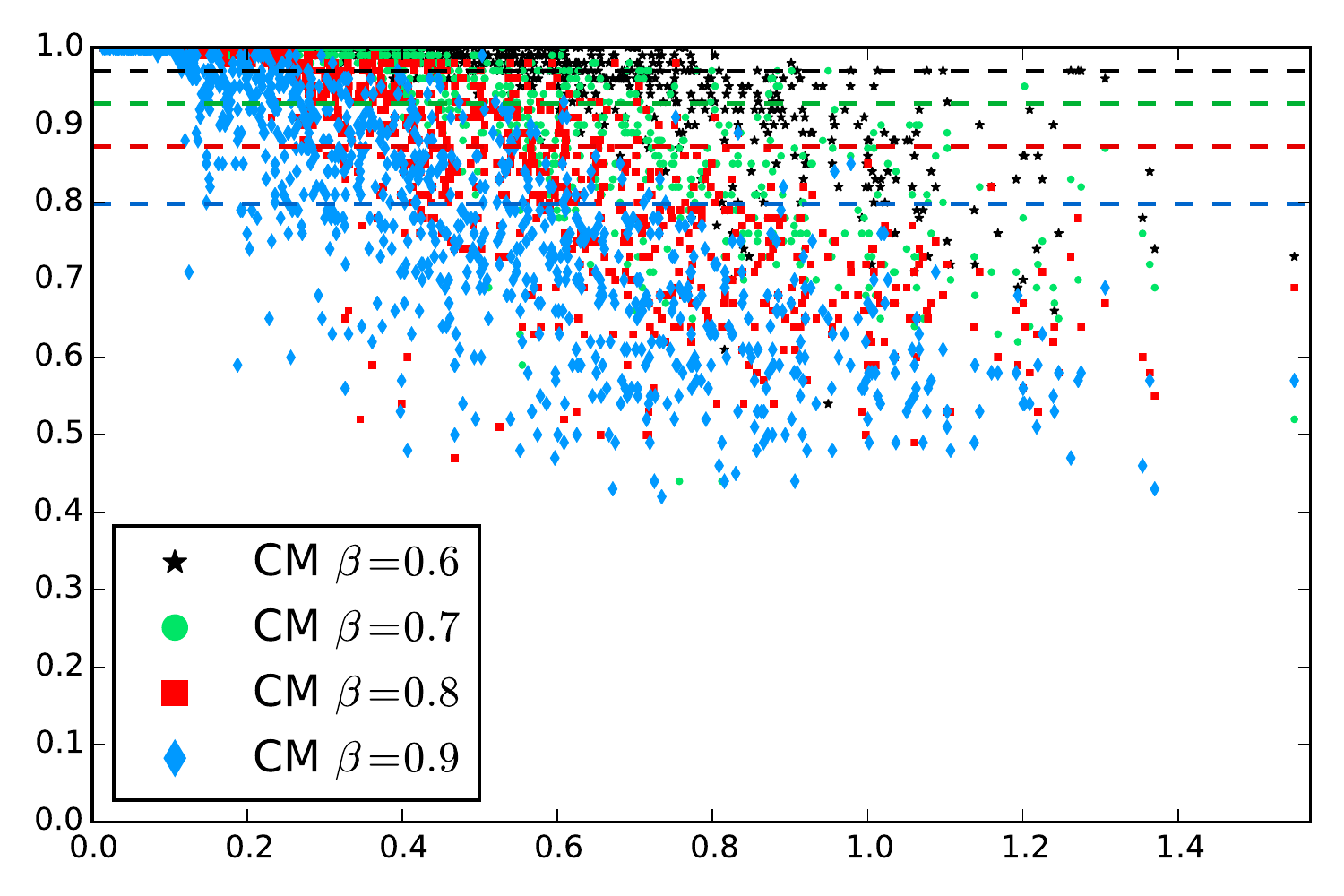}
	\caption{Best value of $\alpha$ with respect to the Friedrichs angle for 1000~pairs of random subspaces for AAMR (left) and CM (right). For each $\beta$, the average value of the best $\alpha$  is represented by a dashed line.}\label{fig:best_alpha}
\end{figure}

In Figure~\ref{fig:num_exp_alpha} we have plotted four prototypical examples of the number of iterations required by AAMR to find a solution for four different values of the Friedrichs angle. It can be clearly seen in this figure that the optimal value of $\alpha$ for the DRM in~\eqref{eq:DRM} is $0.5$, as it was expected (see~\cite[Remark~3.11(i)]{BCNPW15}). For this reason, we set the value of $\alpha$ for the DRM to $0.5$ in our subsequent experiments, and the value of $\alpha$ to $0.9$ for AAMR, based on the results shown in Figures~\ref{fig:best_alpha} and~\ref{fig:num_exp_alpha}. For CM, the best value of $\alpha$ seems to be more influenced by both the value of $\beta$ and the Friedrichs angle. Nonetheless, we set it to $\alpha=0.9$, as it appears to be a sensible choice as well.

\begin{figure}[ht!]\addtocounter{subfigure}{-1}
	\centering
	\subfigure{\includegraphics[width=0.89\linewidth]{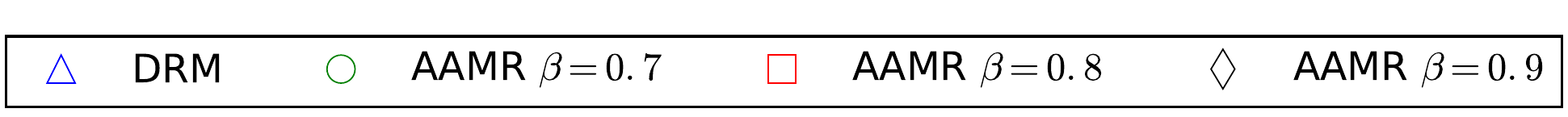}}
	\subfigure[Friedrichs angle: $0.041$ radians.]{\includegraphics[width=0.46\linewidth]{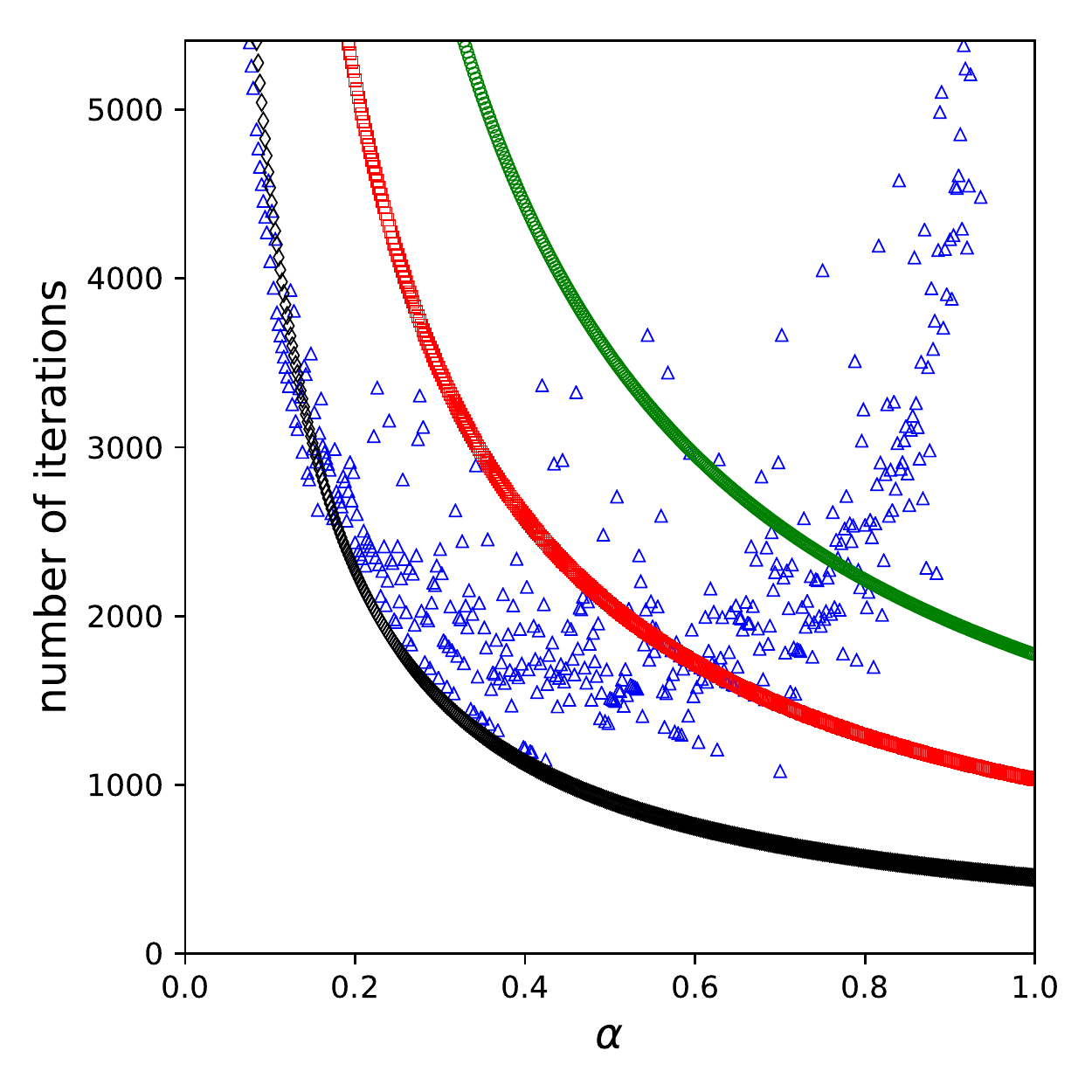}}
	\subfigure[Friedrichs angle: $0.283$ radians.]{\includegraphics[width=0.46\linewidth]{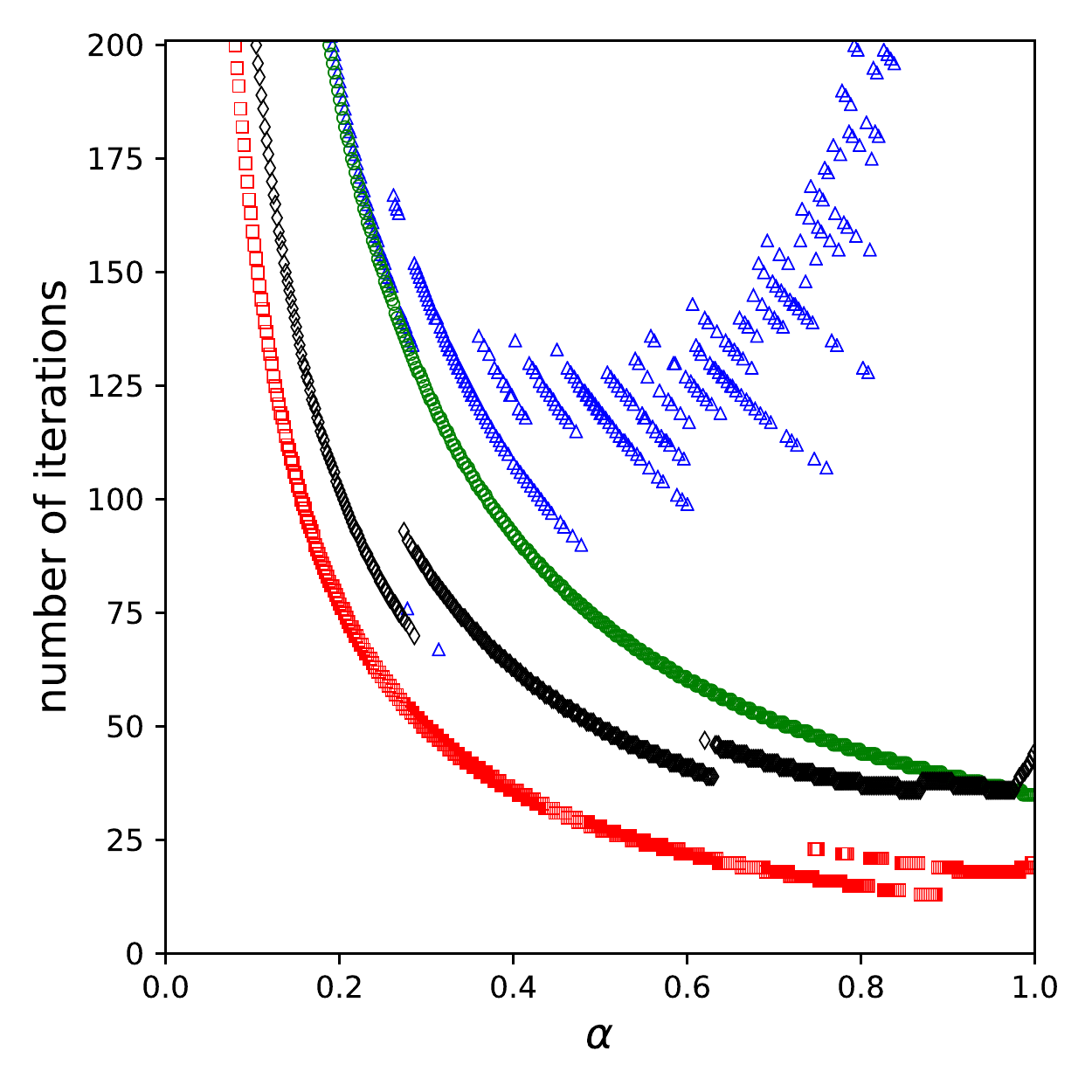}}
	\subfigure[Friedrichs angle: $0.61$ radians.]{\includegraphics[width=0.46\linewidth]{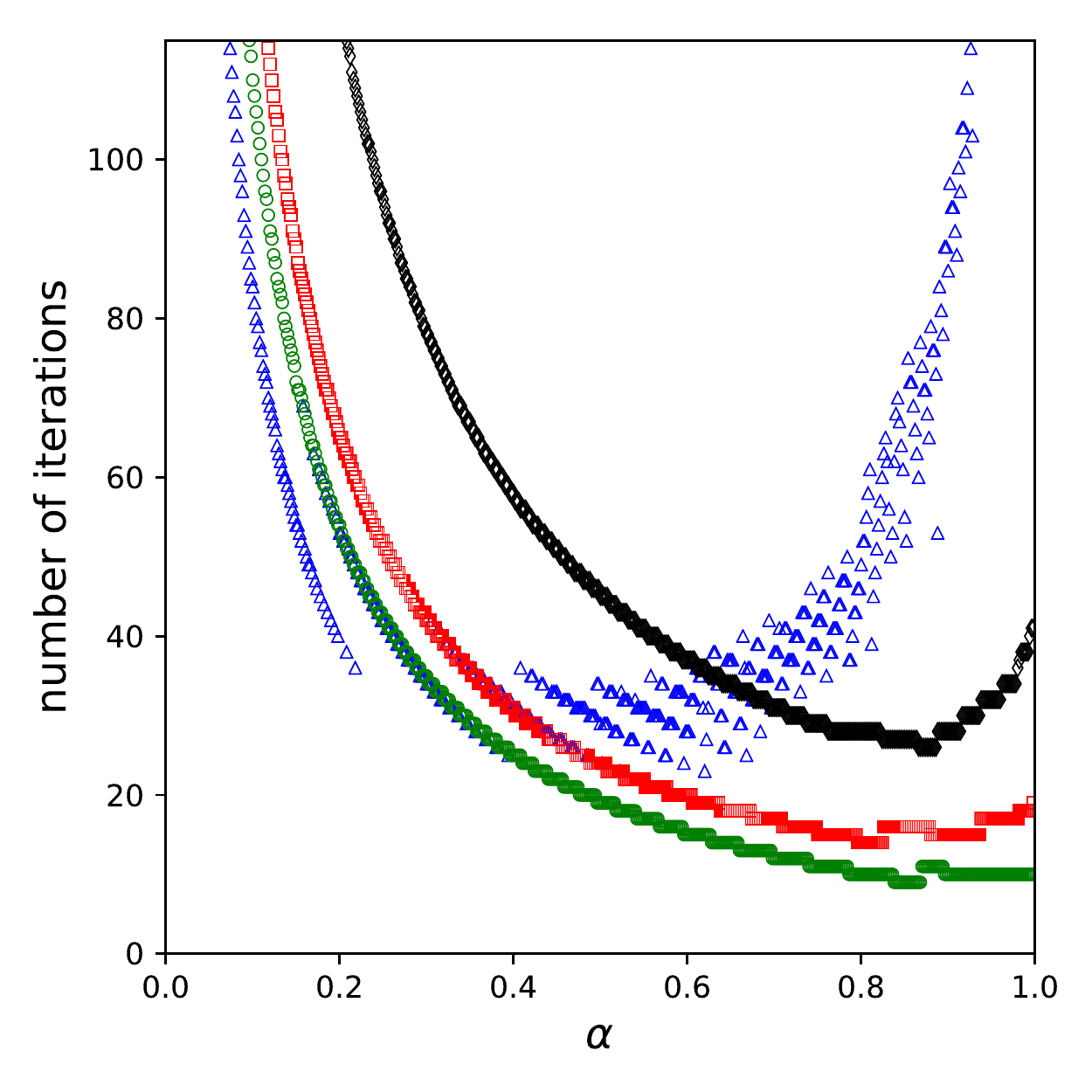}}
	\subfigure[Friedrichs angle: $1.001$ radians.]{\includegraphics[width=0.46\linewidth]{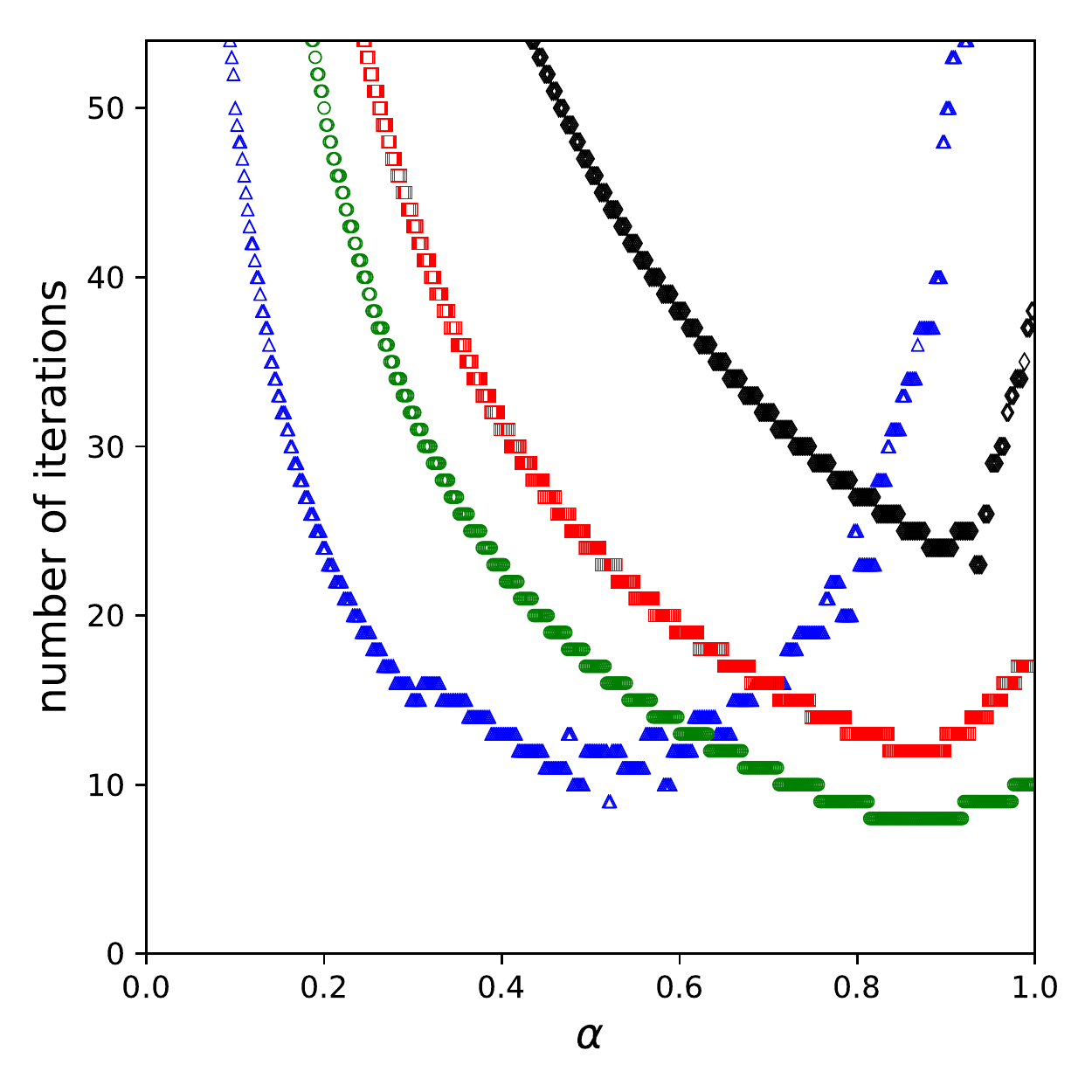}}
	\caption{Number of required iterations with respect to the value of $\alpha$ of DRM and AAMR for three different values of the parameter $\beta$.} \label{fig:num_exp_alpha}
\end{figure}

Our second experiment consisted in replicating some of the tests performed in~\cite{BCNPW14} to compare DRM and MAP, adding this time the results of the new AAMR method, CM and Haugazeau's method. We randomly generated $100$ pairs of subspaces $U$ and $V$ in $\R^{50}$. For each pair of subspaces, $10$~random starting points (with Euclidean norm 10) were chosen and each of the four methods were applied. As we realized that the parameter $\beta$ had a big influence in the behavior of the AAMR scheme, as can be observed in Figure~\ref{fig:num_exp_alpha}, we computed the sequences generated by the AAMR method for six different values of $\beta$ (these values were $0.5$, $0.6$, $0.7$, $0.8$, $0.9$ and $0.99$). We did the same with CM, setting also the value of $\alpha_n$ to $0.9$ in~\eqref{eq:combettes2}.
Although there is a freedom of choice for the initial point in the AAMR method and CM, we took it as the point to be projected, as this is the starting point that needs to be used by DRM, MAP and Haugazeau's method. The results are shown in Figures~\ref{fig:num_exp_median} and \ref{fig:num_exp_std}. For each pair of subspaces, the horizontal axis represents the Friedrichs angle, and the vertical axis represents the median (Fig.~\ref{fig:num_exp_median}) or the standard deviation (Fig.~\ref{fig:num_exp_std}) of the number of iterations required to converge for $10$~random initializations.

\begin{figure}
	\centering
	\subfigure{\includegraphics[width=0.75\linewidth]{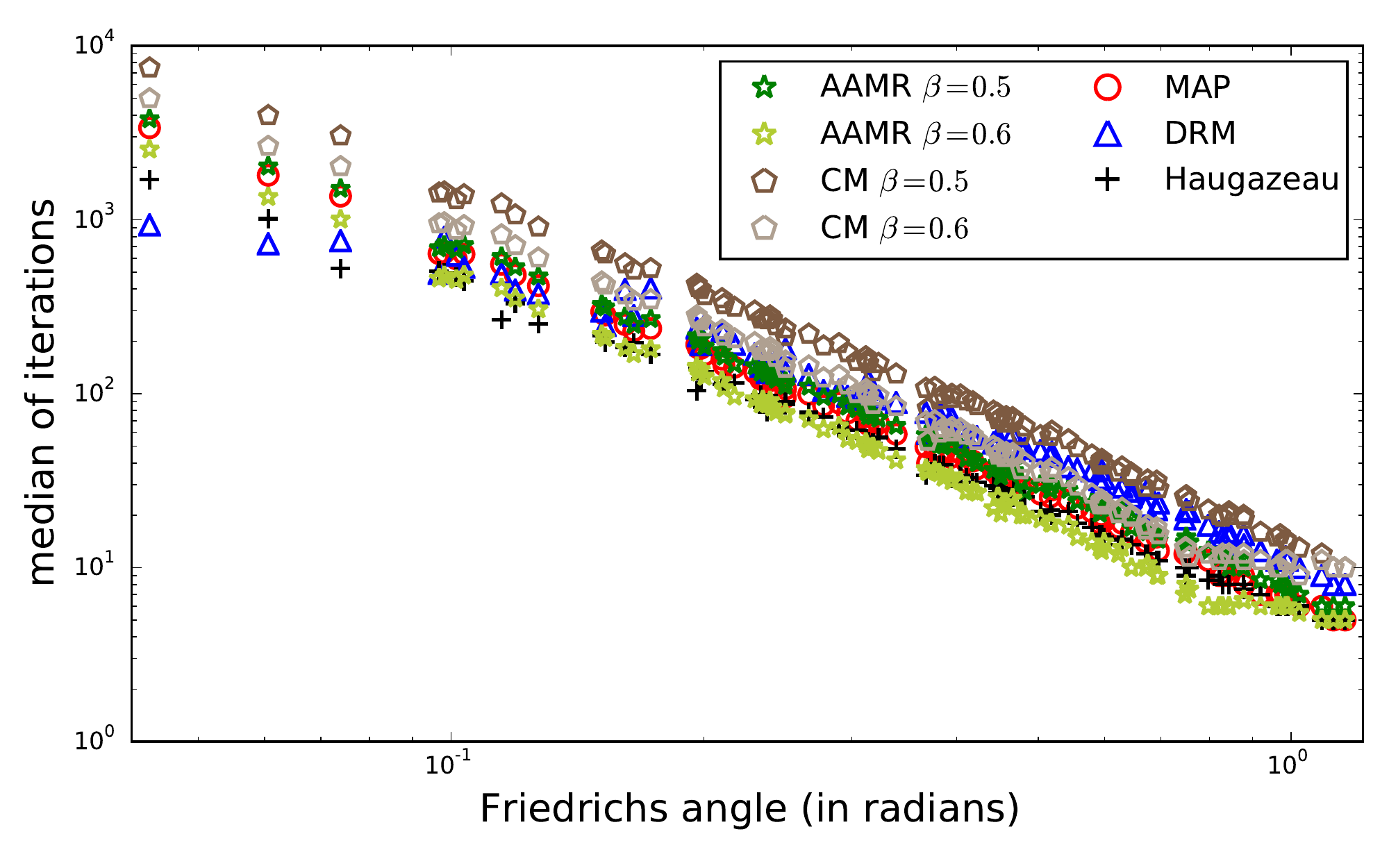}}
	\subfigure{\includegraphics[width=0.75\linewidth]{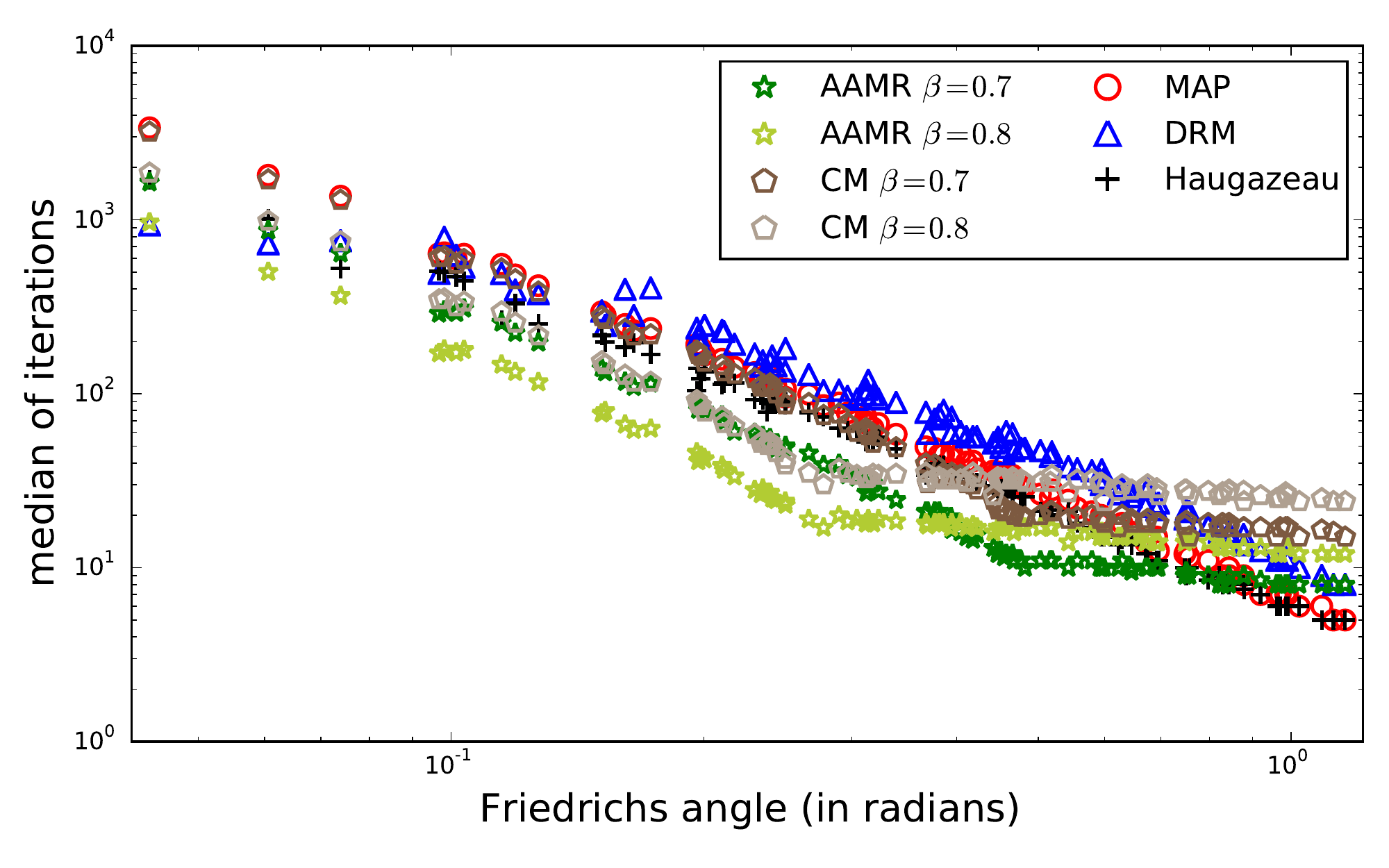}}
	\subfigure{\includegraphics[width=0.75\linewidth]{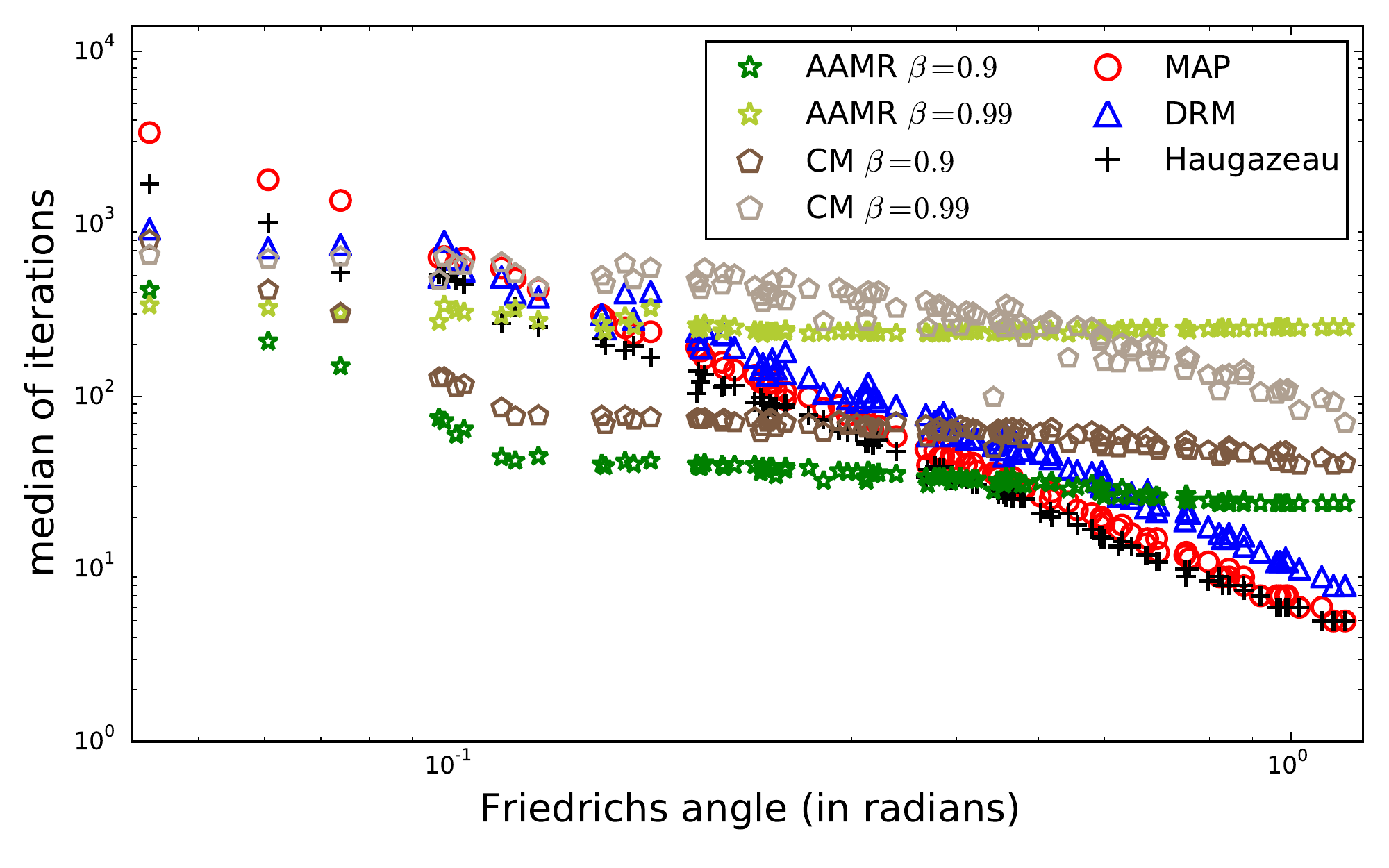}}
	\caption{Median of the required number of iterations with respect to the Friedrichs angle of MAP, DRM, Haugazeau's method, CM and AAMR for six different values of the parameter~$\beta$.} \label{fig:num_exp_median}
\end{figure}

\begin{figure}
	\centering
	\subfigure{\includegraphics[width=0.75\linewidth]{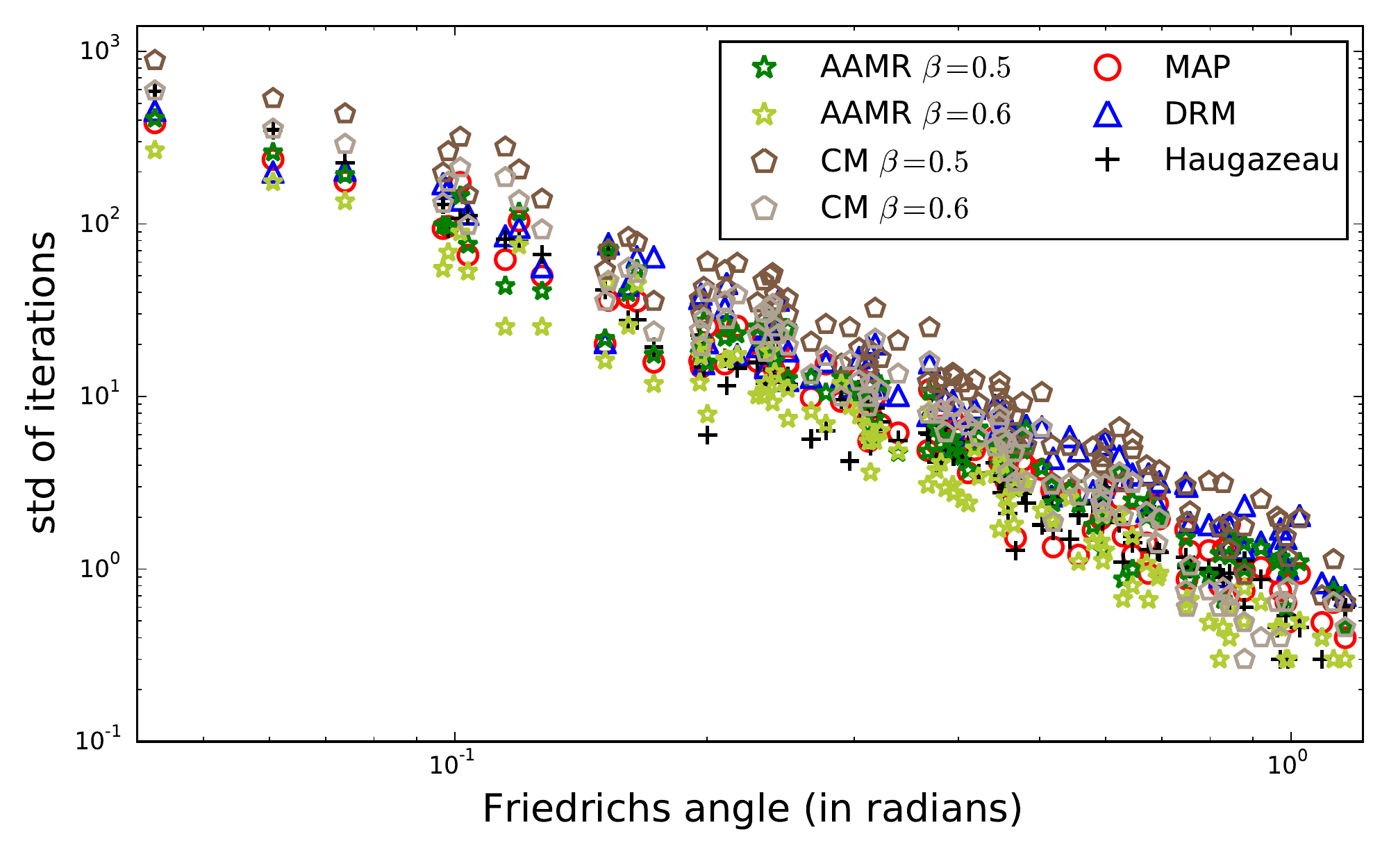}}
	\subfigure{\includegraphics[width=0.75\linewidth]{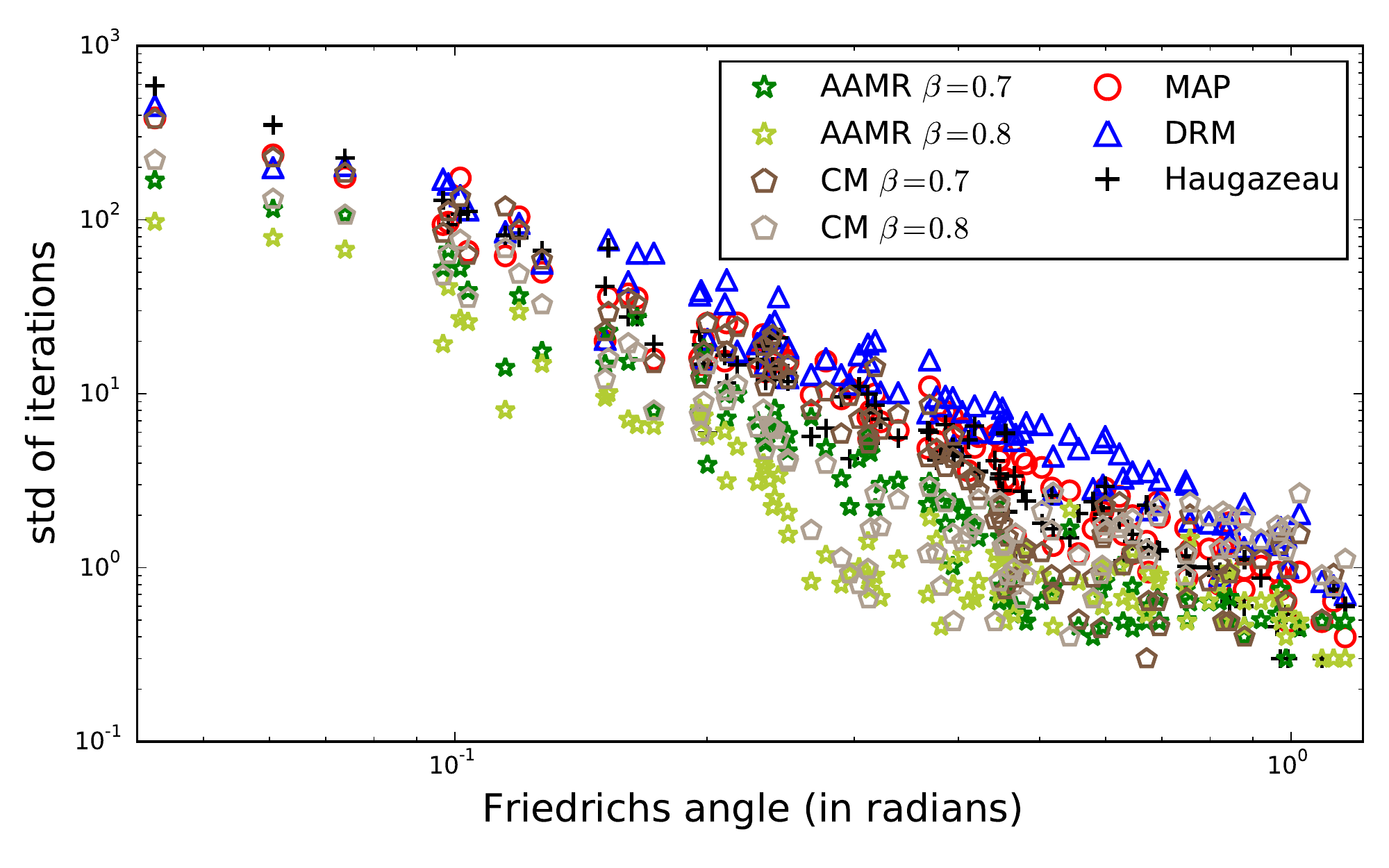}}
	\subfigure{\includegraphics[width=0.75\linewidth]{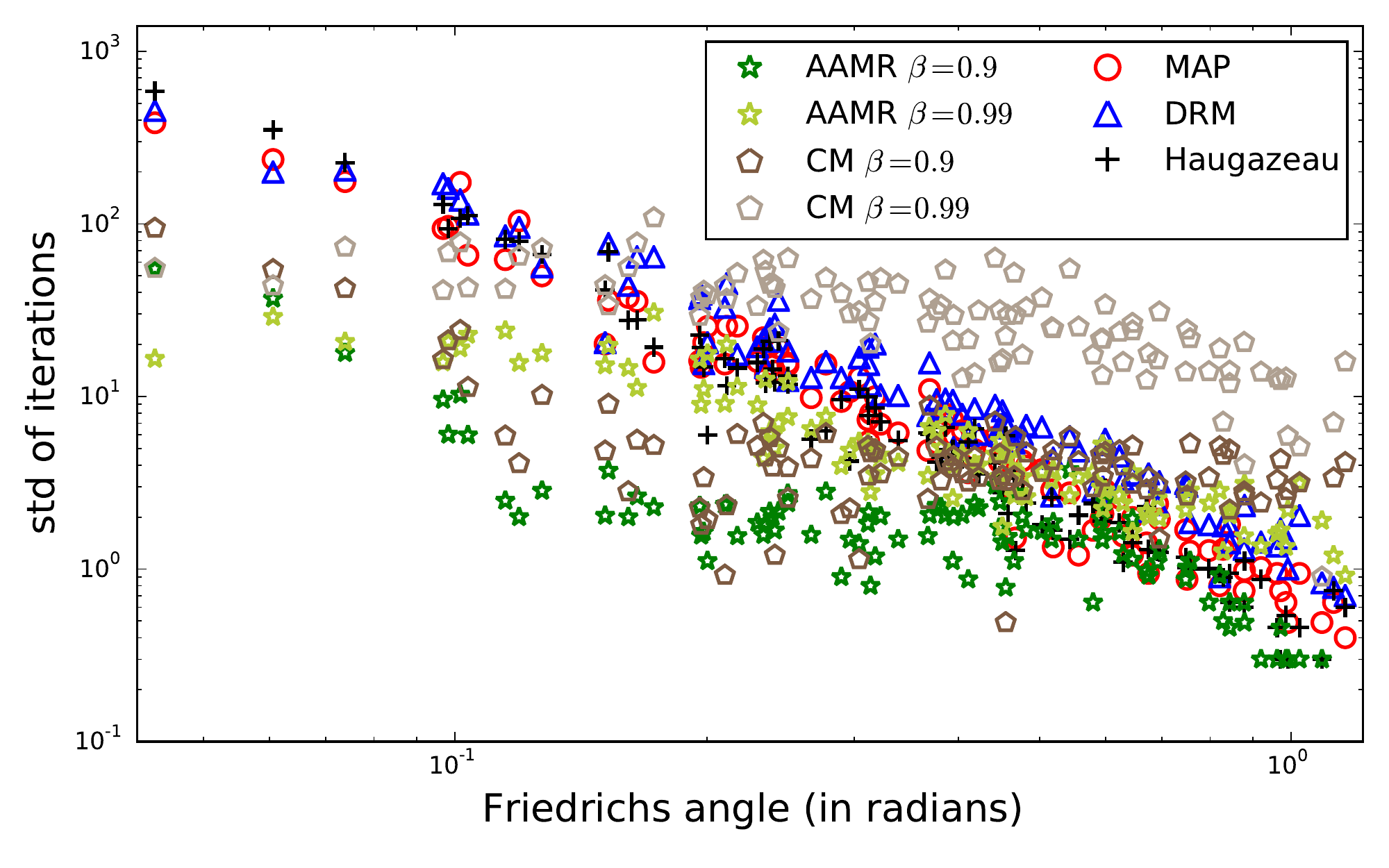}}
	\caption{Standard deviation of the required number of iterations with respect to the Friedrichs angle of MAP, DRM, Haugazeau's method, CM and AAMR for six different values of the parameter $\beta$.} \label{fig:num_exp_std}
\end{figure}

On one hand, we can deduce from Figure~\ref{fig:num_exp_median} that the rate of convergence of the AAMR method depends on both the angle and the parameter $\beta$. For values of $\beta$ above $0.7$, there exists an interval of small angles for which AAMR is the fastest method. For large angles, MAP and Haugazeau's method clearly outperforms DRM and AAMR. A simple example showing this behavior is depicted in Figure~\ref{fig:dykstra}.

We observe that MAP, DRM and Haugazeau's method satisfy a decrease in the number of iterations when the angle increases. Unfortunately, while the number of iterations in these three methods keep on decreasing for large values of the angle, the AAMR method and CM seem to have an asymptotic behavior around a horizontal line. That is, they need a minimum number of iterations to converge whatever the angle is (although this number is not very big). On the other hand, Figure~\ref{fig:num_exp_std} shows that the AAMR method is \emph{more robust} in terms of the standard deviation of the number of iterations. In fact, it seems that the larger the value of $\beta$ is, the more robust it becomes.

\begin{figure}
	\centering
	\subfigure[Small angle.]{\includegraphics[width=0.49\linewidth]{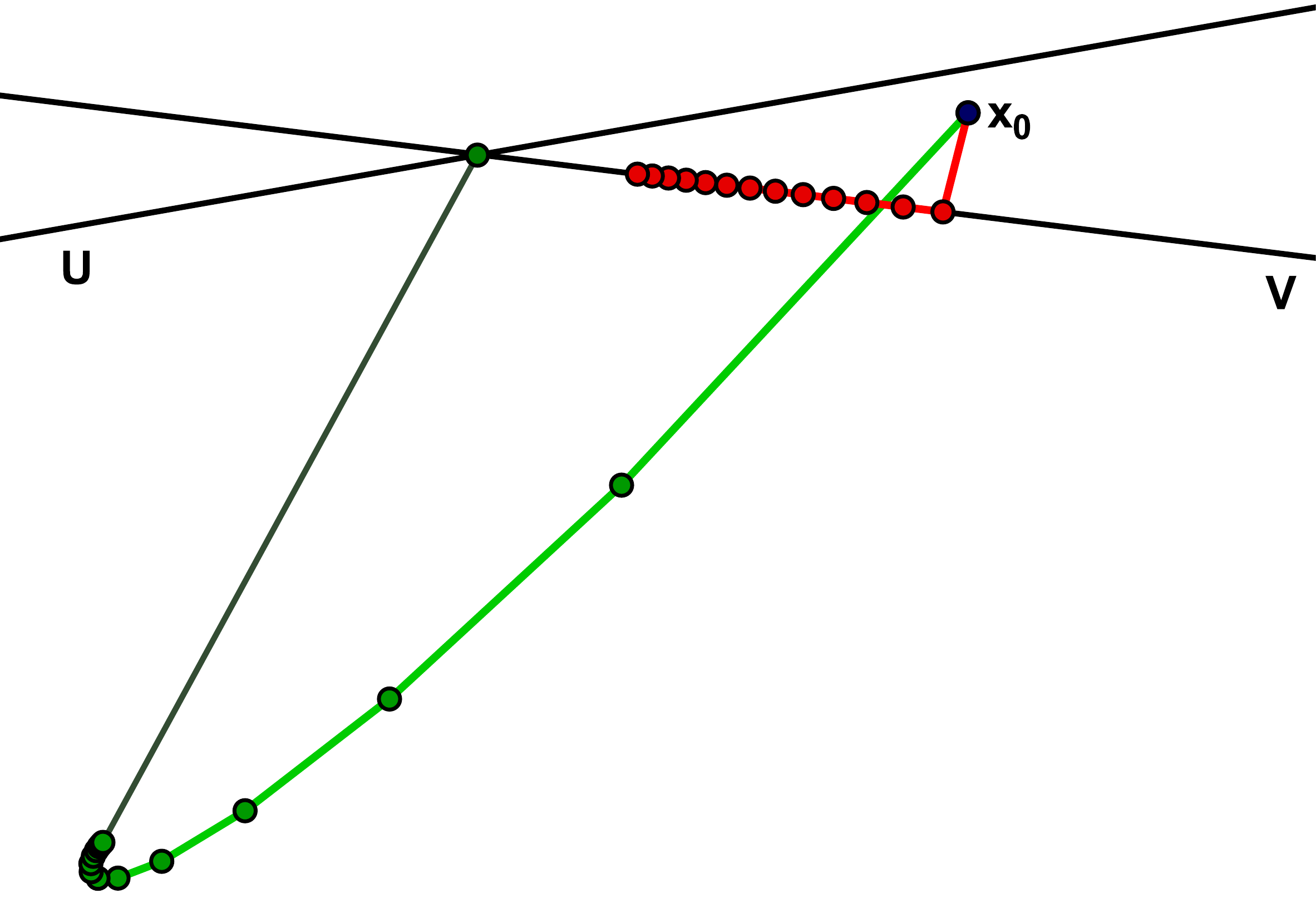}}
	\subfigure[Large angle.]{\includegraphics[width=0.49\linewidth]{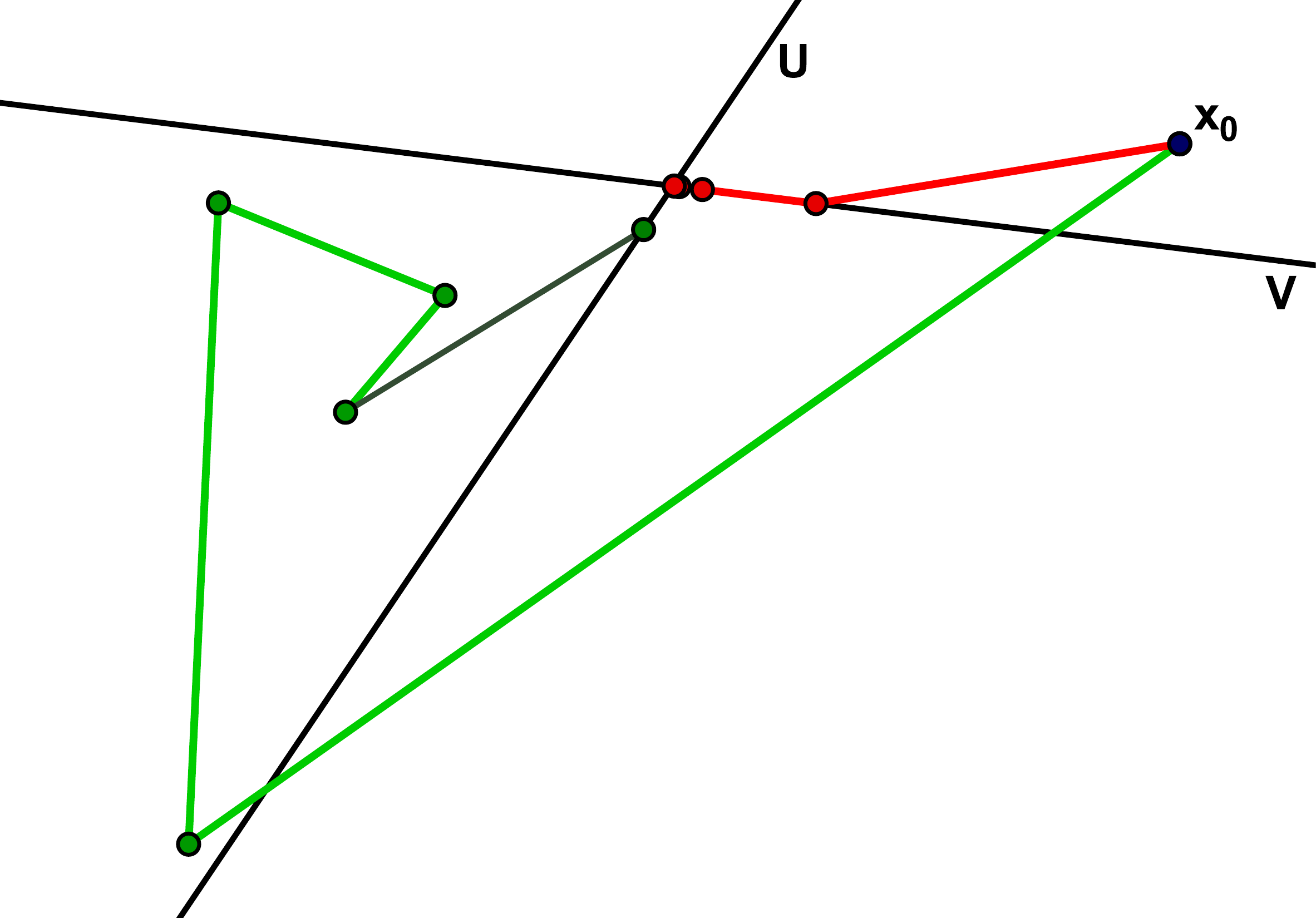}}
	\caption{Behavior of the AAMR (in green) and alternating projections (in red) algorithms when applied to two lines in $\R^2$ for two different Friedrichs angles. We see that AAMR outperforms MAP for small angles, while MAP is faster for large angles.} \label{fig:dykstra}
\end{figure}

With the purpose of additionally comparing the rate of convergence of the methods, we computed the distance of the first $100$ iterates of the sequences to be monitored to the real solution. We show the results of four instances with well-differentiated Friedrichs angles in Figure~\ref{fig:num_exp_dts}. To improve the clarity and comprehensibility of the plots, we have not included the results of CM as it was outperformed by AAMR. One might expect the AAMR method to inherit the ``rippling'' behavior of the DRM, specially when $\beta$ is large, which is when the definition of the iterations of both methods are more similar. This is not entirely truth: although the AAMR method indeed shows these ``waves'' in Figure~\ref{fig:num_exp_dts}, this behavior depends on both the Friedrichs angle and the parameter~$\beta$. Additionally, we see in this figure that the AAMR method with a large parameter $\beta$ clearly outperforms the other schemes when the Friedrichs angle is small. The larger the angle becomes, the better MAP and Haugazeau's method behave. As pointed out by Bauschke et al. in~\cite{BCNPW14}, it is expected that MAP performs better than DRM when the Friedrichs angle is large, as the rates of convergence of these two methods when applied to subspaces $U$ and $V$ are, respectively, $c_F(U,V)^2$ and $c_F(U,V)$. Finally, we clearly observe in Figure~\ref{fig:num_exp_dts} that HLWB is the slowest algorithm for solving this problem.

\begin{figure}[ht!]\addtocounter{subfigure}{-1}
	\centering
	\subfigure{\includegraphics[width=0.9\linewidth]{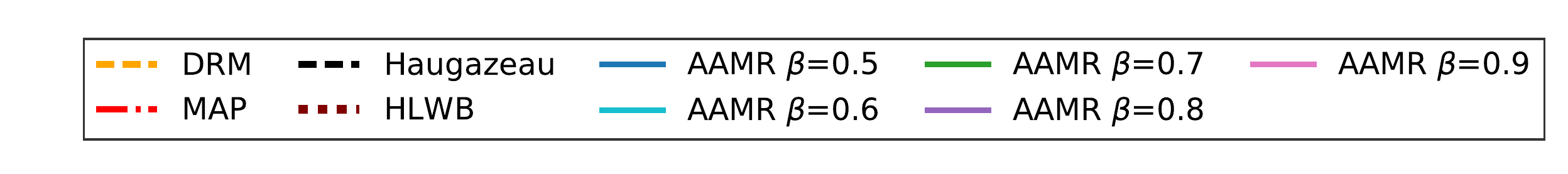}}
	\subfigure[Friedrichs angle: $0.074$ radians.]{\includegraphics[width=0.49\linewidth]{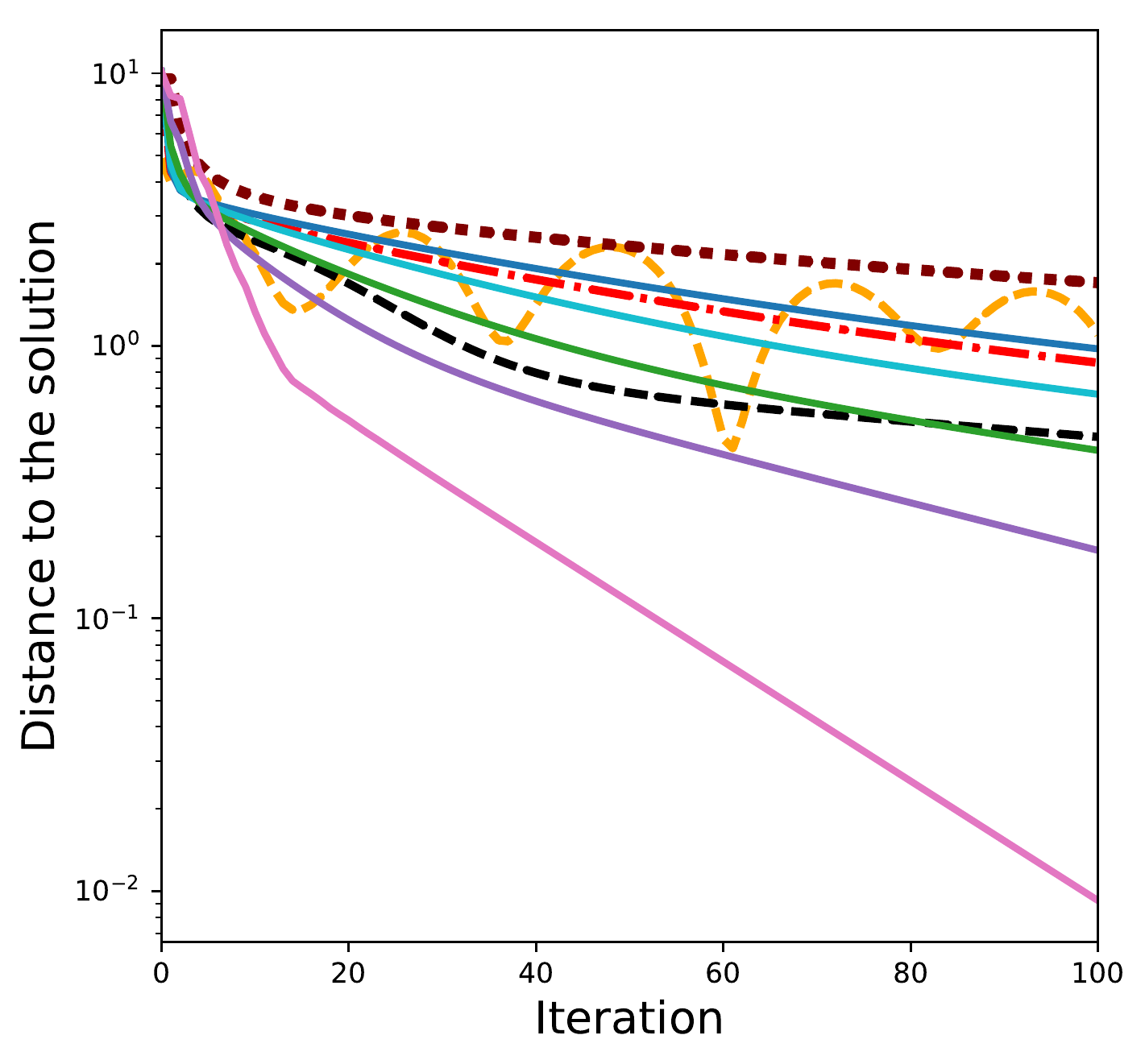}}
	\subfigure[Friedrichs angle: $0.314$ radians.]{\includegraphics[width=0.49\linewidth]{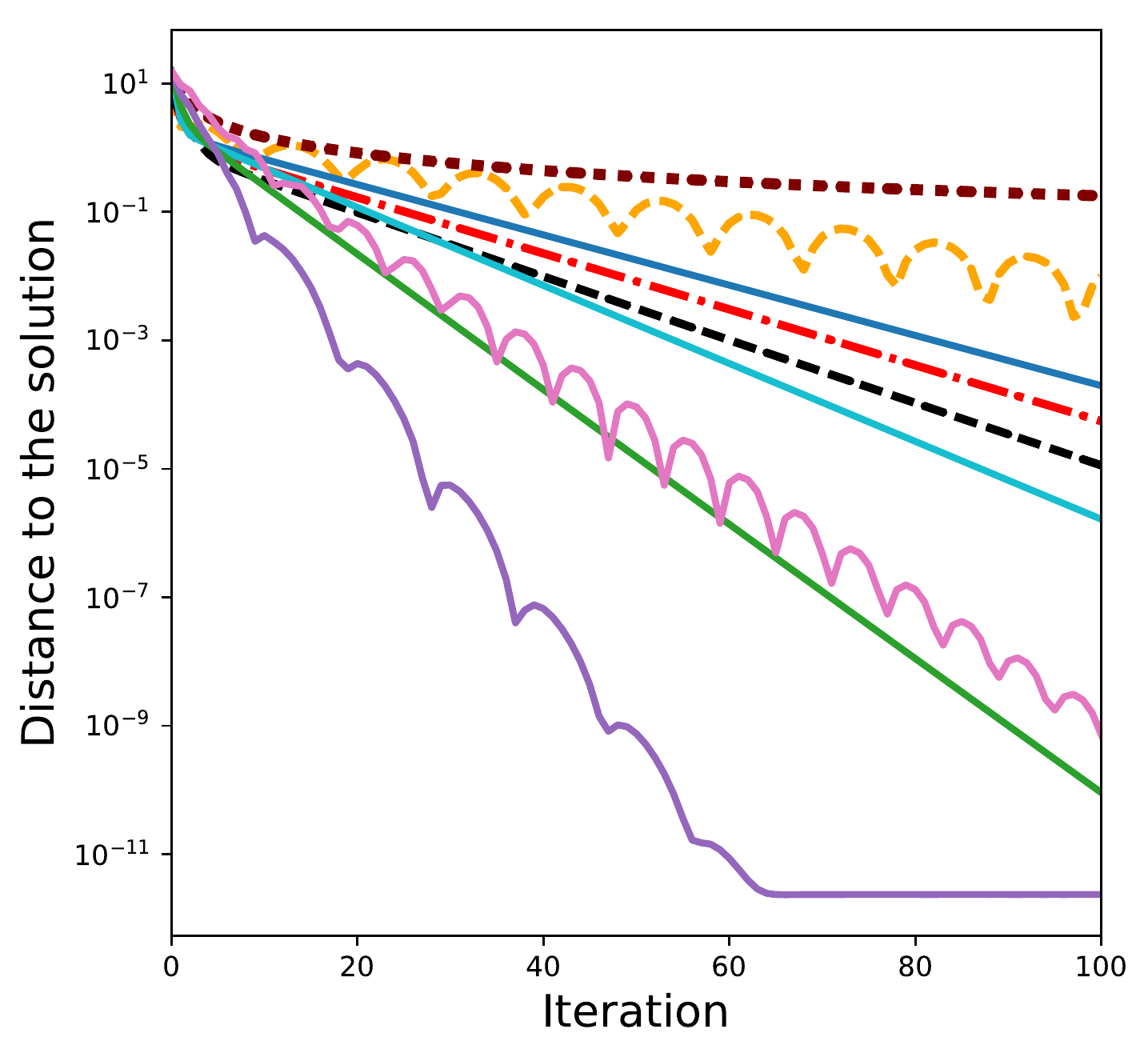}}
	\subfigure[Friedrichs angle: $0.751$ radians.]{\includegraphics[width=0.49\linewidth]{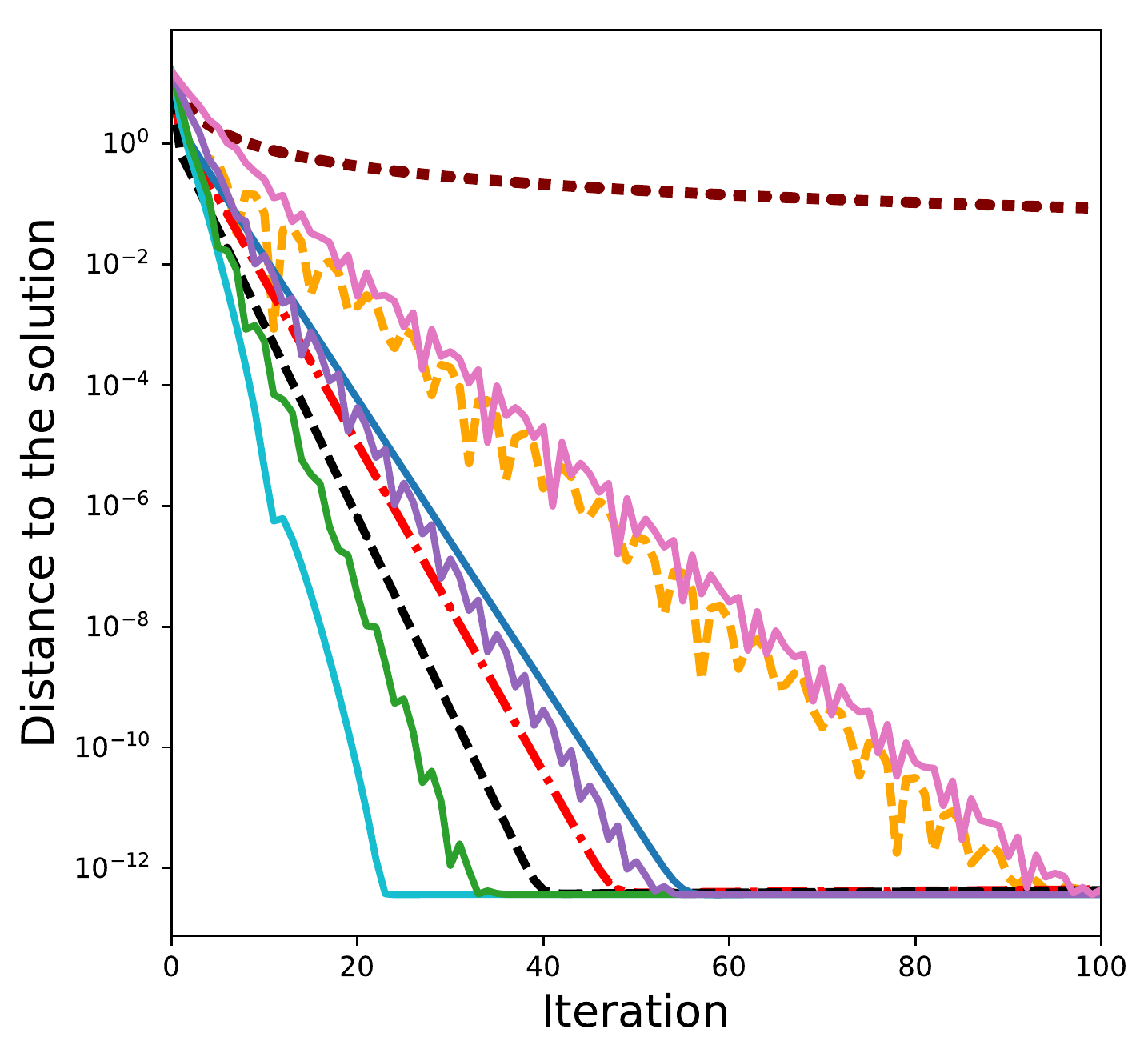}}
	\subfigure[Friedrichs angle: $1.367$ radians.]{\includegraphics[width=0.49\linewidth]{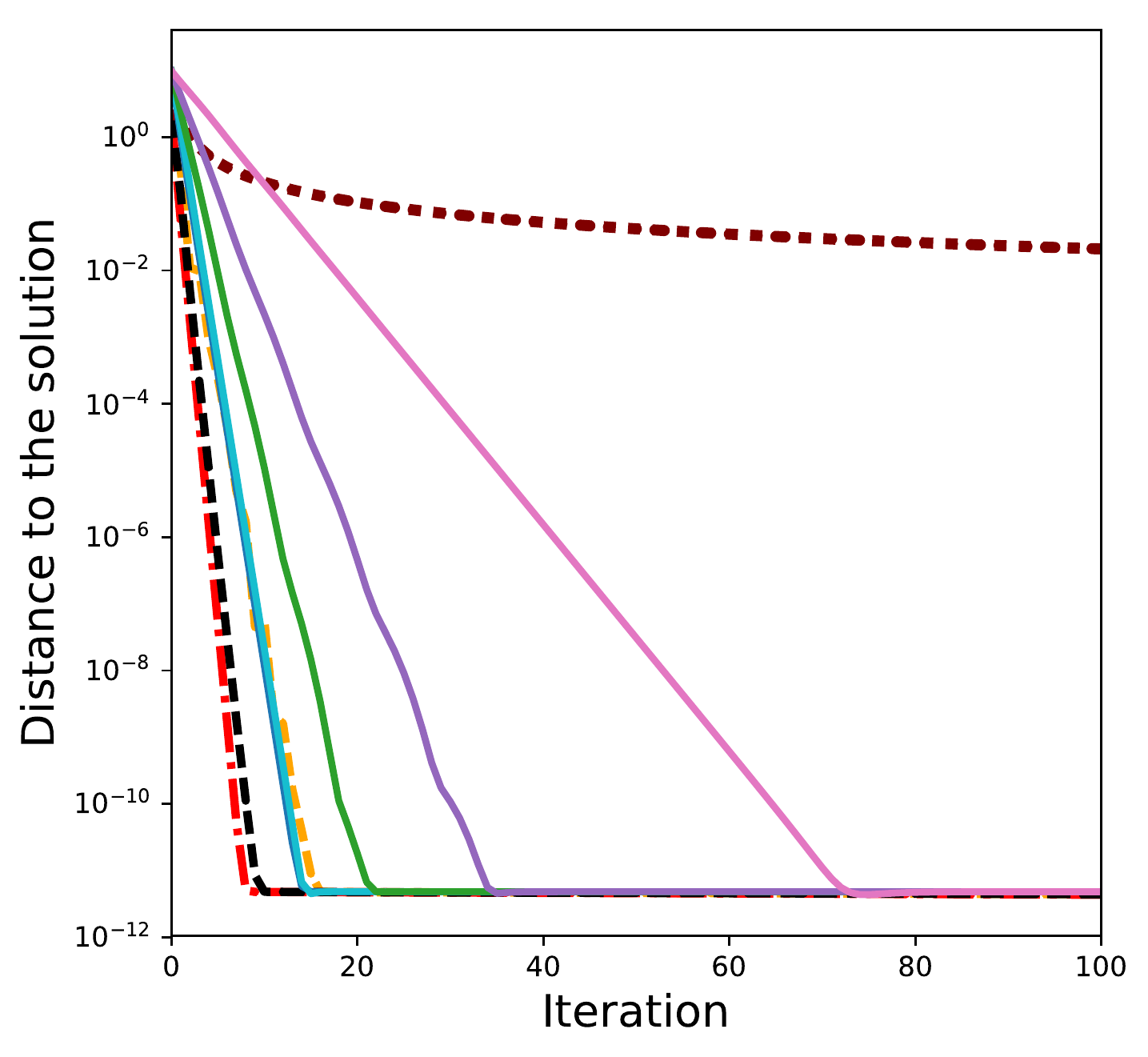}}
	\caption{Distance of the $100$ first iterations of the monitored sequences of MAP, DRM, HLWB, Haugazeau's method and AAMR for five different values of the parameter $\beta$ to the real solution.} \label{fig:num_exp_dts}
\end{figure}

In our third numerical experiment, we continued investigating how the parameter $\beta$ affects the number of iterations depending on the angle. In this experiment, $1000$ pairs of subspaces were generated. Then, for $10$ random starting points, we ran the AAMR method for every value of $\beta$ in $\{0.1, 0.2, 0.3, 0.4, 0.5, 0.6, 0.7, 0.8, 0.9, 0.99\}$. The results are shown in Figure~\ref{fig:num_exp_betas}. One can see that values of $\beta\leq0.4$ are an inefficient choice, since $\beta=0.5$ appears to dominate them for every angle. Larger values of $\beta$ work better for small angles, but the performance of the method  is worse for large angles for these large values of $\beta$.

\begin{figure}[ht!]
	\centering
	\includegraphics[width=1\linewidth]{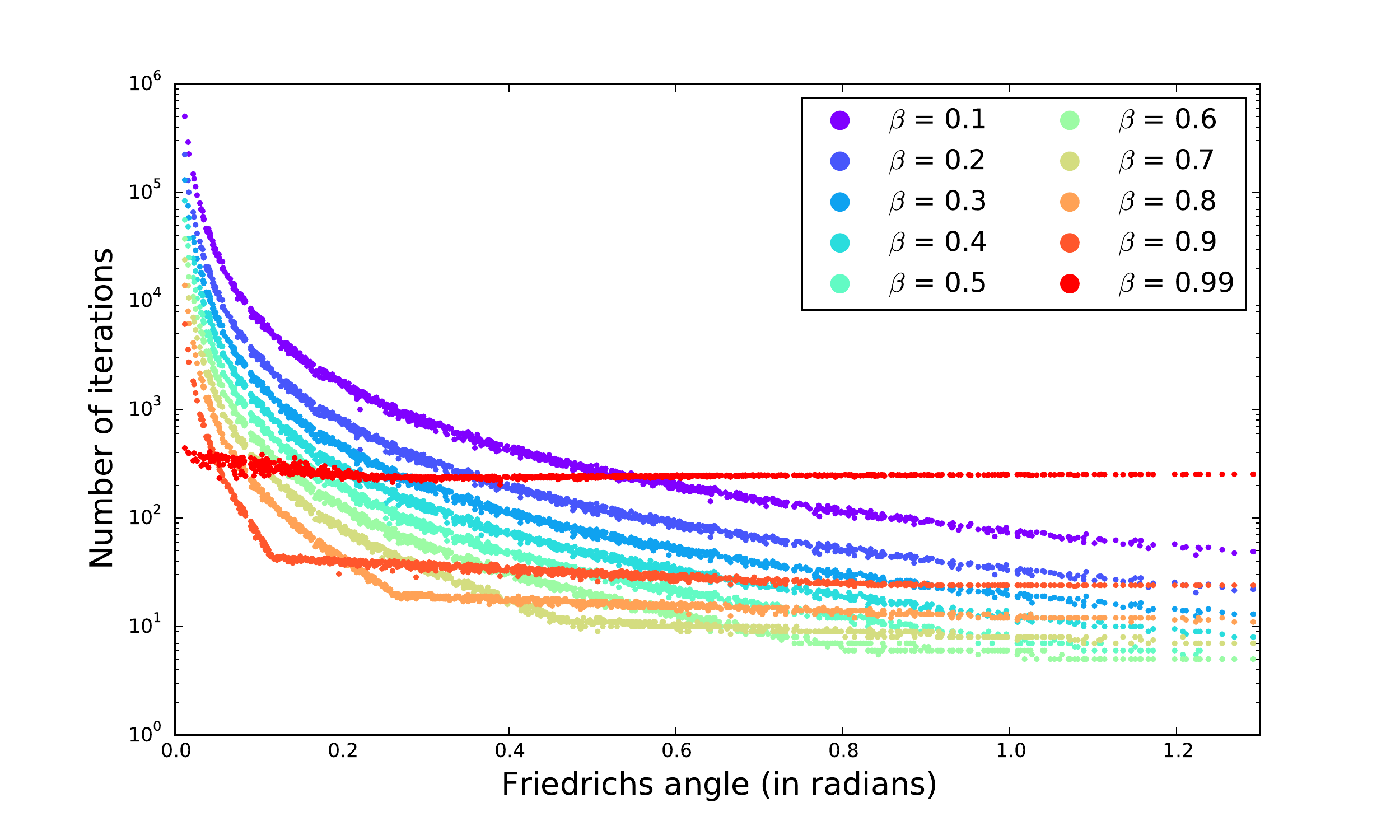}
	\caption{Median number of iterations for $10$ random starting points required by the AAMR method for different values of the parameter $\beta$ with respect to the Friedrichs angle.} \label{fig:num_exp_betas}
\end{figure}

In order to further analyze the influence of the parameter~$\beta$, we decided to test which~$\beta$ suits best each angle. In this fourth experiment we randomly generated $100$ pairs of subspaces in $\mathbb{R}^{50}$, choosing them so that their Friedrichs angles were \emph{approximately} equally distributed in $\left[0,\frac{\pi}{2}\right]$ (to this aim, we divided the interval $\left[0,\frac{\pi}{2}\right]$ into $100$ subintervals and then we randomly chose one pair of subspaces whose Friedrichs angle belongs to each subinterval).  To reduce the possible influence of any outlier, we randomly generated $100$ starting points (instead of 10) for each pair of subspaces and run the AAMR method for every $\beta$ in $\{0.4, 0.405, 0.41, \ldots, 0.985,0.99,0.995\}$. Then, for each pair of subspaces, we selected the $\beta$ in the latter set that minimizes the median number of iterations for the $100$ starting points. Observe that this makes a total of 1.2 million runs of the AAMR method. Figure~\ref{fig:num_exp_optimbeta} contains the results, where the optimal value of $\beta$ is represented in the vertical axis, while the Friedrichs angle of each pair of subspaces is represented in the horizontal axis. Additionally, we represented in the same figure the least squares quadratic and exponential fitting curves.

\begin{figure}[ht!]
	\centering
	\includegraphics[width=1\linewidth]{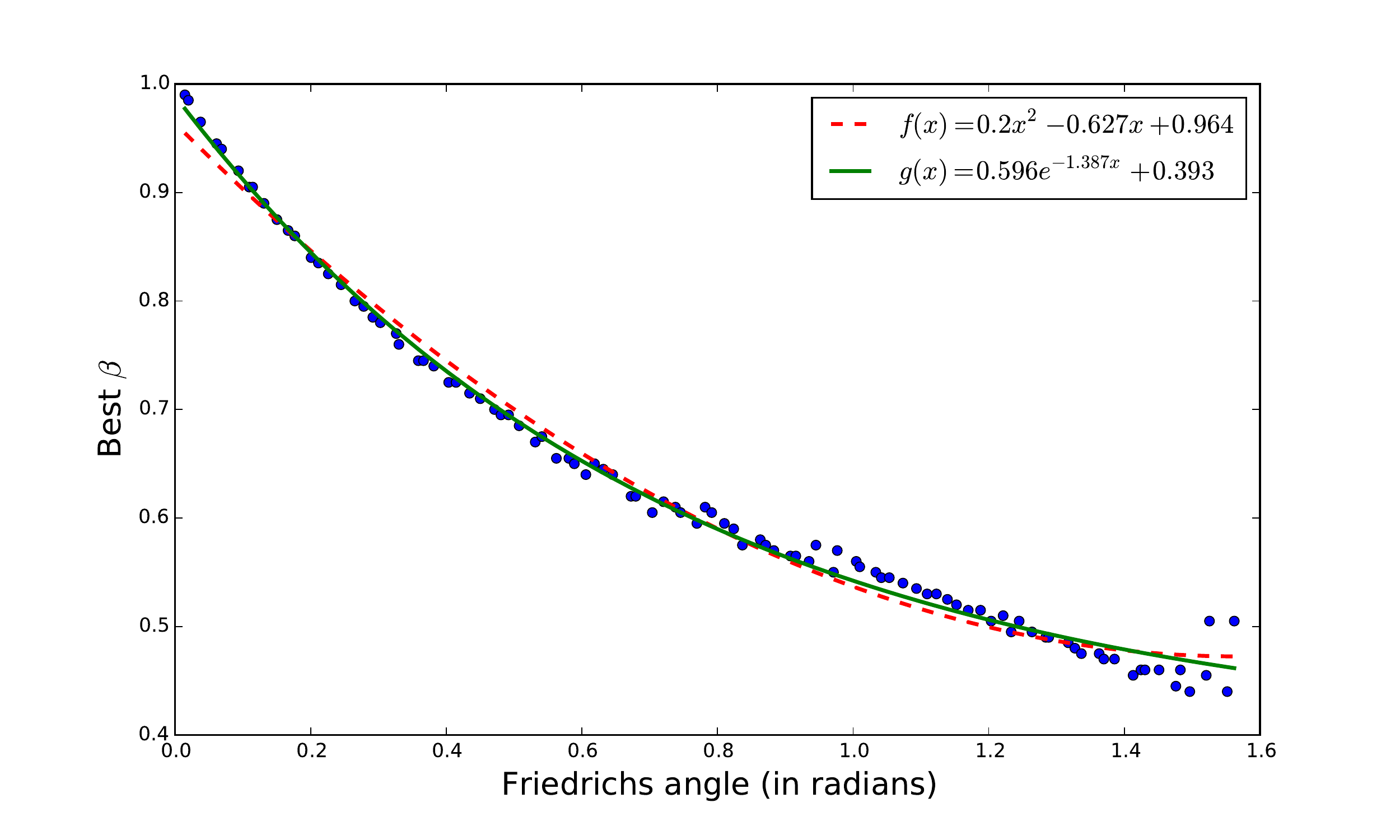}
	\caption{Optimal $\beta$ with respect to the median number of iterations of the AAMR method for $100$ subspaces with different Friedrichs angles.} \label{fig:num_exp_optimbeta}
\end{figure}

Finally, in Figure~\ref{fig:median_optimbeta}, we repeated the experiment shown in Figure~\ref{fig:num_exp_median}. This time we used the Friedrichs angle $\theta$ between each pair of subspaces to choose the value of the parameter $\beta$ in AAMR by using the exponential fitting curve obtained in Figure~\ref{fig:num_exp_optimbeta}. For a more fair comparison with the alternating projection method, we used the relaxed alternating projection method (RAP)
$$T_\mu:=(1-\mu) I+\mu P_V P_U,$$
with $\mu=\frac{2}{1+\sin^2\theta}$, which was shown in~\cite[Theorem~3.6]{BCNPW15} to be the parameter attaining the smallest convergence rate of the latter scheme. For Douglas--Rachford, the optimal parameter for every angle is always the classical one $\alpha=\frac{1}{2}$ (see~\cite[Remark~3.11]{BCNPW15}). Clearly, with the exception of some very large angles, AAMR outperforms the other methods.

\begin{figure}[ht!]
	\centering
	\includegraphics[width=0.95\linewidth]{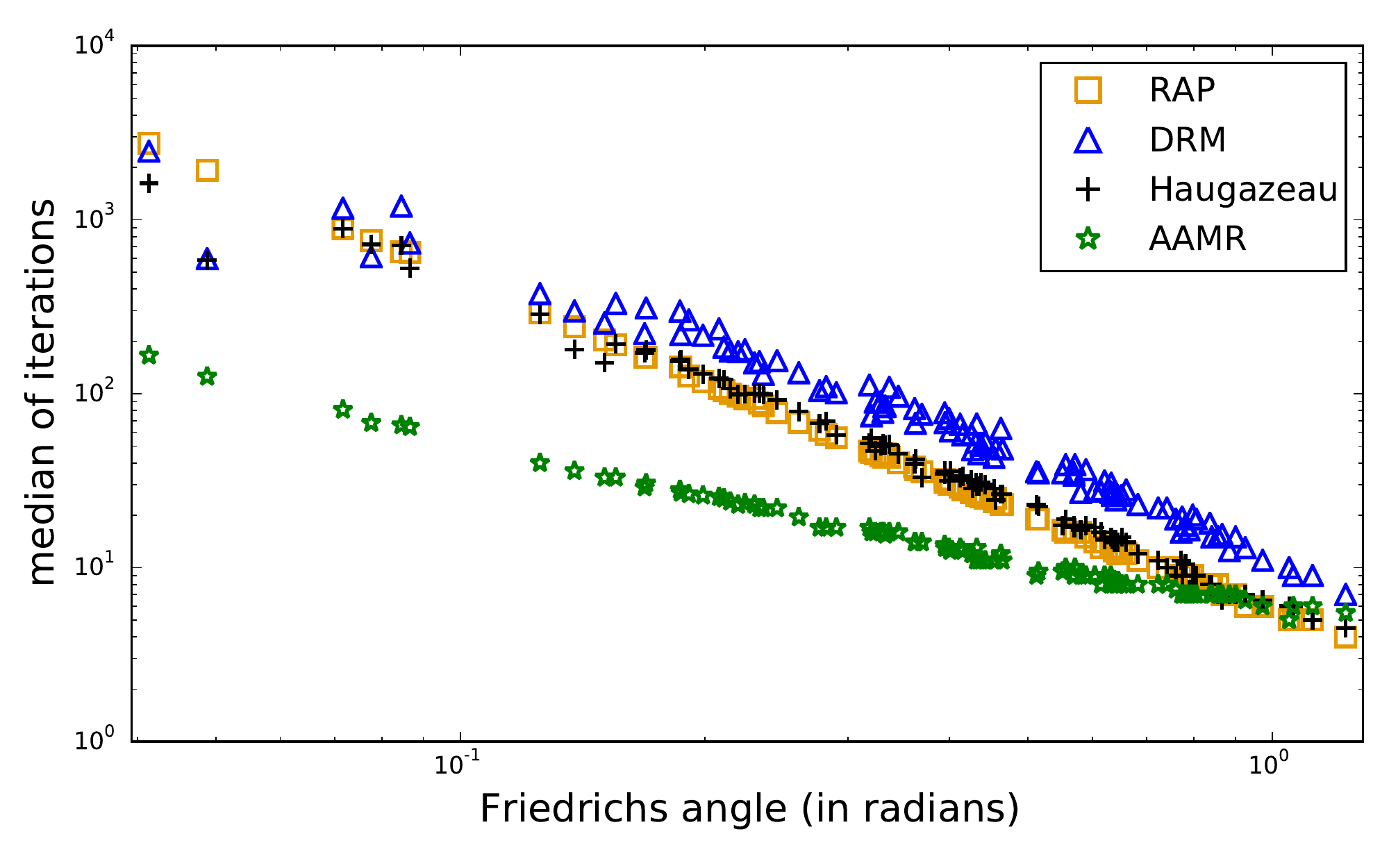}
	\caption{Median of the required number of iterations with respect to the Friedrichs angle of the relaxed alternating projection method with optimal parameter $\mu=\frac{2}{1+\sin^2\theta}$, Douglas--Rachford, Haugazeau's method and AAMR for $\alpha=0.9$ and $\beta=g(\theta)=0.596 e^{-1.387\theta}+0.393$, where $\theta$ is the Friedrichs angle between each pair of subspaces.} \label{fig:median_optimbeta}
\end{figure}

All these experiments led us to recommend a choice of $\alpha=0.9$ and $\beta\in[0.7,0.8]$ for general problems, as it seems to give good convergence results for both small and large angles. Probably, a scheme adapting the value of the parameter $\beta$ would be the best option.

\section{Conclusions and future work}\label{sec:conclusion}

A new projection scheme for solving the best approximation problem, the averaged alternating modified reflections (AAMR) method, was introduced and studied. Even though each iteration of a AAMR method is very similar to the classical Douglas--Rachford method (DRM), the AAMR scheme yields a solution to the best approximation problem, unlike the DRM, which only gives a point in the intersection of the sets. Under a constraint qualification, the method was proved to be strongly convergent to the solution to the best approximation problem. The numerical experiments performed to find the closest point in the intersection of two subspaces show that the new AAMR method outperforms the classical method of alternating projections, the Douglas--Rachford method and Haugazeau's method, when the Friedrichs angle between the subspaces is small. These experiments also show that a choice of the parameter $\alpha=0.9$ and $\beta$ between $0.7$ and $0.8$ might be adequate for general purposes. Although the numerical tests we performed are promising, they are far from a complete computational study. This motivate us to further analyze the rate of convergence of the AAMR method in a future work, both numerically and analytically.

All the results in this work were done for closed and convex sets. Over the past decade, the Douglas--Rachford method has proven to be very effective in some highly non-convex settings~\cite{ABglobal,ABTmatrix,ABTcomb,ABT16,BKroad,BNlocal,benoist,BS11,HLnonconvex,Plinear}.
Because of the similarity of the AAMR scheme and the Douglas--Rachford method, it would be interesting to explore whether it would be possible to use the AAMR method as heuristic on non-convex feasibility problems, either alone, or combined with the DRM to avoid possible cycles. We believe that the best choice would be to use a scheme where the parameter $\beta\in\,]0,1]$ is changed when the method does not give an \emph{adequate} progress.

\paragraph{Acknowledgements} The authors thank Heinz Bauschke for his careful reading of a previous version of this paper, and for making various perceptive comments and suggestions. We also thank D. Russell Luke for his insightful comments. We are indebted to one of the referees for a number of constructive suggestions and for pointing us to reference~\cite{C09}, which led us to prove strong convergence of the shadow sequence in Theorem~\ref{theorem:NPM_convergence}. This work was partially supported by MINECO of Spain  and  ERDF of EU, grant MTM2014-59179-C2-1-P. F.J. Arag\'on Artacho was supported by the Ram\'on y Cajal program by MINECO of Spain  and  ERDF of EU (RYC-2013-13327) and R.~Campoy was supported by MINECO of Spain and ESF of EU (BES-2015-073360) under the program ``Ayudas para contratos predoctorales para la formaci\'on de doctores 2015''.

\end{document}